\documentclass[11pt,a4paper]{amsart}
\newcommand{\Title}{A Complete Characterization of Regular  Inclusions of Finite Dimensional $C^*$-algebras}%
\newcommand{\ShortTitle}{\Title}%

\newcommand{\AuthorOne}{Keshab Chandra Bakshi}%
\newcommand{\AuthorOneAddr}{%
	Department of Mathematics and Statistics, Indian Institute of Technology Kanpur, Uttar Pradesh 208 016, India
}%

\newcommand{\AuthorOneEmail}{%
	 bakshi209@gmail.com, keshab@iitk.ac.in
}%

\newcommand{\AuthorTwo}{Indrajit Ghosh}%
\newcommand{\AuthorTwoAddr}{%
	Department of Mathematics and Statistics, Indian Institute of Technology Kanpur, Uttar Pradesh 208 016, India
}%

\newcommand{\AuthorTwoEmail}{%
	indrajitghosh912@gmail.com, indrajitg@iitk.ac.in
}%

 \newcommand{\AuthorThree}{Sumit Kumar}%
\newcommand{\AuthorThreeAddr}{Department of Mathematics and Statistics, Indian Institute of Technology Kanpur, Uttar Pradesh 208 016, India
}%
\newcommand{\AuthorThreeEmail}{sumitkumar.sk809@gmail.com}

\newcommand{\SubjectClassText}{Primary 46L05, 47L40; Secondary 46L10}

\newcommand{\Keywords}{Regularity, $C^*$-algebra, Normalisers, Finite Dimensional $C^*$-algebras}

\newcommand{\pdfTitle}{\Title}
\newcommand{\pdfAuthor}{Indrajit Ghosh}
\newcommand{\pdfSubject}{Mathematics, Research paper}
\newcommand{\pdfKeywords}{\Keywords}
\newcommand{\pdfCreator}{TeXlive}
\newcommand{\pdfCreationDate}{\today}
\newcommand{\pdfColorLink}{true}
\newcommand{\pdfLinkColor}{cyan}
\newcommand{\pdfUrlColor}{blue}
\newcommand{\pdfCiteColor}{magenta}

\usepackage[top=0.9in, bottom=1in, left=0.7in, right=0.7in]{geometry}
\usepackage{amsmath, amssymb, amsthm} 
\usepackage[utf8]{inputenc}
\usepackage[T1]{fontenc}
\usepackage{mathtools}
\usepackage{mathrsfs} 
\usepackage{xfrac} 
\usepackage{dsfont} 
\usepackage{array}
\usepackage{verbatim}
\usepackage{graphicx}
\usepackage{mdframed}
\usepackage{enumitem} 
\usepackage{hyperref}
\hypersetup{
	pdftitle={\pdfTitle},
	pdfauthor={\pdfAuthor},
	pdfsubject={\pdfSubject},
	pdfcreationdate={\pdfCreationDate},
	pdfcreator={\pdfCreator},
	pdfkeywords={\pdfKeywords},
	colorlinks=\pdfColorLink,
	linkcolor={\pdfLinkColor},
	urlcolor=\pdfUrlColor,
	citecolor=\pdfCiteColor,
	pdfpagemode=UseOutlines,
}
\usepackage{tikz-cd} 
\usepackage{lipsum}
\usetikzlibrary{matrix,arrows}
\usepackage[english]{babel}
\usepackage{lmodern}
\usepackage{bbm} 
\usepackage[dvipsnames]{xcolor}
\usepackage[most]{tcolorbox}
\usepackage{xparse}

\theoremstyle{plain}
\newtheorem{theorem}{Theorem}[section]
\newtheorem{prop}[theorem]{Proposition}
\newtheorem{lem}[theorem]{Lemma}
\newtheorem{cor}[theorem]{Corollary}

\theoremstyle{definition}
\newtheorem{definition}[theorem]{Definition}
\newtheorem{example}[theorem]{Example}

\theoremstyle{remark}
\newtheorem{remark}[theorem]{Remark}

\numberwithin{equation}{section}

\newtheoremstyle{ser}
{8pt}
{8pt}
{\it}
{}
{\sf}
{:}
{6mm}
{}

\theoremstyle{ser}

\newtheoremstyle{serr}
{8pt}
{8pt}
{\normalfont}
{}
{\sf}
{.}
{6mm}
{}

\theoremstyle{serr}

\theoremstyle{ser}

\theoremstyle{ser}
\newtheorem{qn}{Question}

\newtheoremstyle{collabquestion}
  {8pt}
  {8pt}
  {\normalfont}
  {}
  {\sffamily\bfseries\color{blue!70!black}}
  {.}
  {.5em}
  {}

\theoremstyle{collabquestion}
\newtheorem{qninner}{Question}

\definecolor{indraRed}{rgb}{0.593, 0.183, 0.183}
\definecolor{indraPink}{rgb}{0.858, 0.188, 0.478}
\definecolor{indraBlue}{rgb}{0, 0.199, 0.398}
\definecolor{madridBlue}{rgb}{0.199, 0.199, 0.695}
\definecolor{metropolisThemeColor}{rgb}{0.105, 0.214, 0.234}
\definecolor{metropolisBarColor}{rgb}{0.984, 0.0.515, 0.015}
\definecolor{UBCblue}{rgb}{0.04706, 0.13725, 0.26667} 
\definecolor{UBCgrey}{rgb}{0.3686, 0.5255, 0.6235} 

\makeatletter
\def\mathcolor#1#{\@mathcolor{#1}}
\def\@mathcolor#1#2#3{%
	\protect\leavevmode
	\begingroup
	\color#1{#2}#3%
	\endgroup
}
\makeatother

\makeatletter
\def\ps@headings{\ps@empty
  \def\@evenhead{\normalfont\scriptsize\hfil \leftmark{}{}\hfil}%
  \def\@oddhead{\normalfont\scriptsize\hfil \rightmark{}{}\hfil}%
  \let\@mkboth\markboth
  \def\@evenfoot{\normalfont\scriptsize\hfil\thepage\hfil}%
  \def\@oddfoot{\normalfont\scriptsize\hfil\thepage\hfil}%
}
\makeatother

\makeatletter
\def\author@andify{%
  \nxandlist {\unskip ,\penalty-1 \space\ignorespaces}%
    {\unskip {} \@@and~}%
    {\unskip \penalty-2 \space \@@and~}%
}
\makeatother

\definecolor{indraRed}{rgb}{0.593, 0.183, 0.183}
\definecolor{indraPink}{rgb}{0.858, 0.188, 0.478}
\definecolor{indraBlue}{rgb}{0, 0.199, 0.398}
\definecolor{madridBlue}{rgb}{0.199, 0.199, 0.695}
\definecolor{metropolisThemeColor}{rgb}{0.105, 0.214, 0.234}
\definecolor{metropolisBarColor}{rgb}{0.984, 0.0.515, 0.015}
\definecolor{UBCblue}{rgb}{0.04706, 0.13725, 0.26667} 
\definecolor{UBCgrey}{rgb}{0.3686, 0.5255, 0.6235} 

\newcommand{\spn}{{\operatorname{span}\,}}

\newcommand{\C}{\mathbb{C}}

\newcommand{\N}{\mathbb{N}}

\NewDocumentCommand{\mn}{m O{\mathbb{C}}}
{\mathbb{M}_{#1}(#2)} 

     \newcommand{\sA}{\mathcal A}		
     \newcommand{\sB}{\mathcal B}		
     \newcommand{\sC}{\mathcal C}		
     \newcommand{\sD}{\mathcal D}

     \newcommand{\sG}{\mathcal G}		
     \newcommand{\sH}{\mathcal H}

     \newcommand{\sN}{\mathcal N}

     \newcommand{\sU}{\mathcal U}

     \newcommand{\sZ}{\mathcal Z}

\NewDocumentCommand{\nor}{g}
  {\sN\IfValueT{#1}{_{#1}}}

\NewDocumentCommand{\unor}{g}
  {\sU\sN\IfValueT{#1}{_{#1}}}

\NewDocumentCommand{\grnor}{g}
  {\sG\sN\IfValueT{#1}{_{#1}}}

\begin{document}
	
        \title[\ShortTitle]{\MakeUppercase\Title}
	\author{\AuthorOne}
	\address[Bakshi]{\AuthorOneAddr}
	
	\email{\AuthorOneEmail}

    \author{\AuthorTwo}
	\address[Ghosh]{\AuthorTwoAddr}
	\email{\AuthorTwoEmail}

    \author{\AuthorThree}
	\address[Kumar]{\AuthorThreeAddr}
	\email{\AuthorThreeEmail}
	
	\date{}
	\subjclass{\SubjectClassText}
	\keywords{\Keywords}
	
	
	\begin{abstract}
		
We give a complete characterization of regular (in the sense of Kumjian and Renault) unital inclusions of finite-dimensional $C^*$-algebras. For subalgebras $\bigoplus_j( \mathbb{M}_{d_j}(\mathbb{C}) \otimes \mathbb{I}_{p_j})$ of $\mathbb{M}_n(\mathbb{C})$, we show that regularity depends only on equality of the multiplicities $p_j$, while unitary regularity---characterized recently by the first author and Silambarasan---additionally requires equality of the $d_j$; we recover the latter via a streamlined alternative proof. Extending this to inclusions of arbitrary finite-dimensional $C^*$-algebras, encoded by an inclusion matrix $\Lambda$, we show that regularity is equivalent to an explicit row/column condition on $\Lambda$---coinciding with the normalizer matrix introduced by the first author and Silambarasan ---so that the device used there to detect unitary regularity is shown to characterize regularity in general; unitary regularity is recovered by a further dimension-equality condition. 
	\end{abstract}

	\maketitle%
        \thispagestyle{empty}%

    \tableofcontents
	


    \section{Introduction}
    \noindent In his seminal paper \cite{Dix1954}, Dixmier initiated the study of unitary normalizers of von Neumann subalgebras of $\mathrm{II}_1$-factors to investigate maximal abelian self-adjoint subalgebras (masas). Building on Dixmier's work, Feldman and Moore \cite{Feldman_Moore} further developed the theory by establishing a correspondence between Cartan subalgebras and measured equivalence relations. Inspired by these developments, Kumjian \cite{Kumjian_86} and Renault \cite{Renaultbook} independently introduced the notion of regularity in the $C^*$-algebraic setting, thereby laying the foundation for the theory of Cartan subalgebras in $C^*$-algebras. Later, Exel \cite{Exel2011} introduced the notion of noncommutative Cartan subalgebras in $C^*$-algebras and developed the theory of regularity for general noncommutative inclusions.

Very recently, the first author and Silambarasan \cite{Bakshi2026} studied unitary regularity in finite-dimensional $C^*$-algebras and provided a characterization of unitarily regular inclusions in this setting. However, the characterization of general regular subalgebras of finite-dimensional $C^*$-algebras was left open. Although the regularity of inclusions has been extensively studied across various classes of operator algebras, a comprehensive understanding within the finite-dimensional context remains missing from the literature. Motivated by this gap, our aim is to develop a systematic framework for studying regular inclusions in finite-dimensional $C^*$-algebras. In this article, we provide a complete characterization of general regularity for unital inclusions of finite-dimensional $C^*$-algebras, alongside a more streamlined alternative characterization of unitary regularity (cf. \cite{Bakshi2026}). We emphasize that regularity is a property of the specific embedding rather than the abstract isomorphism class of the subalgebra. Indeed, two distinct embeddings of an abstract $C^*$-algebra $\sB$ into $\sA$ can yield entirely different regularity outcomes (see Remark \ref{rem:abs_vs_concrete}). 
Fortunately, regularity is invariant under automorphisms of the ambient algebra: if $\sB \subseteq \sA$ is a concrete subalgebra and $\sigma \in \operatorname{Aut}(\sA)$, then $\sB$ is regular in $\sA$ if and only if $\sigma(\sB)$ is regular in $\sA$ (see Proposition \ref{prop:autmorph_pres_nor}). With this framework established, we now discuss our main results.

First, we characterize the (unitarily) regular unital $C^*$-subalgebras of $\mn{n}$ (see \S~\ref{sec:char_mn}). By the Artin--Wedderburn theorem (Theorem \ref{thm:artin-wedderburn}), any finite-dimensional $C^*$-algebra is isomorphic to a direct sum $\bigoplus_{j=1}^k \mn{d_j}$ for some integers $d_1, \dots, d_k \in \N$. Embedding this algebra into $\mn{n}$ assigns a non-zero multiplicity to each block. Consequently, up to unitary conjugation, any unital $C^*$-subalgebra of $\mn{n}$ takes the standard form (see Proposition \ref{Spatial-inclusion}):
$$
\bigoplus_{j=1}^k \left( \mn{d_j} \otimes \mathbb{I}_{p_j} \right),
$$
for some $p_1, \dots, p_k\in \N$ satisfying $\sum_{j=1}^k p_jd_j = n$. The vectors $\vec{p} := (p_1, \dots, p_k)$ and $\vec{d}:=(d_1, \dots, d_k)$ are called the \emph{multiplicity vector} and \emph{dimension vector} of the subalgebra, respectively.

Next, we generalize this to an arbitrary finite-dimensional $C^*$-algebra. For a vector $\vec{n}:=(n_1, \dots, n_l)\in \N^l$, we denote $\mn{\vec{n}}:= \bigoplus_{i = 1}^l \mn{n_i}$. When $\vec{n}$ is a scalar $n$, we saw that unital $C^*$-subalgebras correspond to vectors $\vec{p}, \vec{d}\in \N^k$ such that $\vec{p}\cdot \vec{d}=n$. For an arbitrary $\vec{n}\in \N^l$, this structure naturally generalizes to an $l\times k$ matrix $\Lambda$, called the \emph{inclusion matrix}, and a dimension vector $\vec{d}\in \N^k$ satisfying 
$$
\Lambda \vec{d}= \vec{n}.
$$
Together, $\Lambda$ and $\vec{d}$ determine a concrete unital $C^*$-subalgebra of $\mn{\vec{n}}$. Thus, throughout this article, the phrase ``$\mn{\vec{d}}$ is a subalgebra of $\mn{\vec{n}}$ with inclusion matrix $\Lambda$'' specifically refers to the image of the embedding of $\mn{\vec{d}}$ into $\mn{\vec{n}}$ induced by $\Lambda$ (see \S~\ref{sec:char_gen} for details).
The main question we address in this article is:

\begin{qn}
    What properties of the inclusion matrix $\Lambda_{l \times k}$ characterize whether the corresponding subalgebra $\mn{\vec{d}}$ is (unitarily) regular in $\mn{\vec{n}}$, and conversely?
\end{qn}

To answer this in the case of $\mn{n}$, we prove the following theorem, which characterizes regular subalgebras of $\mn{n}$. Specifically, we show that in $\mn{n}$, regularity is entirely determined by the multiplicities of the matrix blocks present in the subalgebra.\\

\noindent \textbf{Theorem A:} (Theorem \ref{thm:regularity-char-mnc})
\emph{The subalgebra $\bigoplus_{j=1}^{k} \left( \mn{d_j} \otimes \mathbb{I}_{p_j} \right)$ is regular in $\mn{n}$ if and only if
$$
p_1 = \cdots = p_k.
$$}\\
Regarding unitary regularity, Theorem \ref{thm:uni-reg-char-mnc} provides a complete characterization for subalgebras of $\mn{n}$ (cf. \cite{Bakshi2026}). More precisely,
\emph{the subalgebra $\bigoplus_{j=1}^{k} \left( \mn{d_j} \otimes \mathbb{I}_{p_j} \right)$ is unitarily regular in $\mn{n}$ if and only if
$$
p_1 = \cdots = p_k \text{ and } d_1 = \cdots = d_k.
$$}\\

Observe that the preceding results allow us to determine the total number of regular and unitarily regular subalgebras of $\mn{n}$ simply by counting the number of partitions of $n$ encoded by the corresponding vectors $\vec{p}$ and $\vec{d}$. The total number of pairs $(\vec{p}, \vec{d})$ satisfying $p_1 = \cdots = p_k$ is
\[
\sum_{S \mid n} 2^{S-1},
\]
while the total number of pairs satisfying both $p_1 = \cdots = p_k$ and $d_1 = \cdots = d_k$ is
\[
\sum_{d \mid n} \tau(d).
\]
These exact counting sequences are cataloged in the On-Line Encyclopedia of Integer Sequences (OEIS) as \href{https://oeis.org/search?q=A034729&go=Search}{\texttt{A034729}} and \href{https://oeis.org/search?q=A007425&go=Search}{\texttt{A007425}}, respectively. 
Their values for small $n$ are summarized in the following table (see table (\ref{tab:formula_values}) below):

\begin{table}[htbp]
\caption{Total number of regular and unitarily regular subalgebras of $\mn{n}$ for $1 \leq n \leq 10$}
\label{tab:formula_values}
\renewcommand{\arraystretch}{1.3}
\begin{tabular}{ccc}
\hline\hline
$n$ & $\displaystyle \sum_{S\vert n} 2^{S-1}$  & $\displaystyle \sum_{d\vert n} \tau(d)$ \\[3ex]
\hline
1  & 1   & 1 \\
2  & 3   & 3 \\
3  & 5   & 3 \\
4  & 11  & 6 \\
5  & 17  & 3 \\
6  & 39  & 9 \\
7  & 65  & 3 \\
8  & 139 & 10 \\
9  & 261 & 6 \\
10 & 531 & 9 \\
\hline\hline
\end{tabular}
\end{table}

Motivated by the ideas behind the proofs of these theorems, we extend our techniques to obtain a complete characterization of (unitarily) regular inclusions in general finite-dimensional $C^*$-algebras. To state this result, we require the following definition:

\begin{definition}[Regular Matrix]\label{def:regular_mat}
Let $\Lambda=(\Lambda_{ij})$ be an $l\times k$ matrix with nonnegative integer entries. For each $i=1,\ldots,l$, define the support of the $i$th row by
\[
S_i:=\{\,j:\Lambda_{ij}>0\,\}.
\]
The matrix $\Lambda$ is called \emph{regular} if the following conditions hold:
\begin{enumerate}
    \item[(i)] For each $i=1,\ldots,l$, there exists an integer $p_i\in\mathbb{N}$ such that
    \[
    \Lambda_{ij}=p_i,\qquad \forall\, j\in S_i.
    \]
    Equivalently, all positive entries in the $i$th row are equal.

    \item[(ii)] For each $i=1,\ldots,l$, any two columns indexed by $S_i$ are identical; that is,
    \[
    \vec{\Lambda}_j=\vec{\Lambda}_{j'},\qquad \forall\, j,j'\in S_i.
    \]
\end{enumerate}
\end{definition}

Here, we point out that the \emph{normalizer matrix} introduced in \cite[Definition 3.1]{Bakshi2026} is precisely the same as the regular matrix defined above (see Proposition \ref{prop:reg_mat-iff-normaliser_mat}). We adopt the term \emph{regular matrix} here to better align with our broader classification of general regular inclusions. With this definition in place, we are now ready to state our main result. 
\medskip

\noindent \textbf{Theorem B:} (Theorem \ref{thm:char_regularity})
\emph{Let $\vec{n} = (n_1,\dots,n_l)^T\in\mathbb{N}^l$, $\vec{d}=(d_1,\dots,d_k)^T\in\mathbb{N}^k$, and let $\Lambda=[\Lambda_{ij}]_{l\times k}$ satisfy
$\Lambda\vec{d}=\vec{n}.$ Then the algebra $\mn{\vec{d}}$ is regular in $\mn{\vec{n}}$ if and only if $\Lambda$ is regular.
}
\medskip

As our framework provides a unified approach to both general and unitary regularity, we also recover the characterization of unitarily regular inclusions established in \cite{Bakshi2026}.
\medskip

\noindent \textbf{Theorem C:} (Theorem \ref{thm:char_unitary_regularity})
\emph{Let $\vec{n} = (n_1,\dots,n_l)^T\in\mathbb{N}^l$, $\vec{d}=(d_1,\dots,d_k)^T\in\mathbb{N}^k$, and let $\Lambda=[\Lambda_{ij}]_{l\times k}$ satisfy
$\Lambda\vec{d}=\vec{n}.$
Then the algebra $\mn{\vec{d}}$ is unitarily regular in $\mn{\vec{n}}$ if and only if the following conditions hold:
\begin{enumerate}
    \item[(i)] $\Lambda$ is regular.
    \item[(ii)] For each $ i = 1, \dots, l$, we have $d_j = d_{j'}$ whenever $j, j' \in S_i$, 
\end{enumerate}
where
$$
S_i:=\{\,j:\Lambda_{ij}>0\,\}
$$
denotes the support of the $i$th row of $\Lambda$.}

We conclude this introduction by briefly outlining the structure of the article. Almost all results established here concern finite-dimensional $C^*$-algebras. In \S~\ref{sec:prelims}, we set up our notation and state some standard basic results without proof, which can be found in texts such as \cite{davidson_book}. In \S~\ref{sec:basic_definitions}, we define three types of regularity: unitary regularity, groupoid regularity, and general regularity. We demonstrate that regularity and groupoid regularity are equivalent (Theorem \ref{thm:reg-iff-gr-reg}). In Example \ref{ex:grop-reg-counter}-(ii), we provide an instance of a subalgebra that is regular but not unitarily regular. However, we show that for abelian subalgebras, all three types of regularity coincide. In \S~\ref{sec:char_mn}, we generalize this coincidence to homogeneous subalgebras of $\mn{n}$ (Corollary \ref{cor:hom-imply-reg-coincide}). Additionally, \S~\ref{sec:char_mn} addresses subalgebras of $\mn{n}$ directly. We establish several crucial results (e.g., Propositions \ref{prop:equal-mult-dim-imply-uni-reg}, \ref{prop:reg-impy-same-dim}, \ref{prop:equal-multi-imply-reg}) that build toward the characterizations of regularity (Theorem \ref{thm:regularity-char-mnc}) and unitary regularity (Theorem \ref{thm:uni-reg-char-mnc}) in $\mn{n}$. A consequence of these characterizations is that the only regular maximal abelian self-adjoint subalgebra (masa) of $\mn{n}$ is, up to unitary conjugation, the standard diagonal masa (Corollary \ref{cor:reg_masa_mn}). Finally, in \S~\ref{sec:char_gen}, we generalize the characterizations from the preceding section to arbitrary unital inclusions of finite-dimensional $C^*$-algebras $\sB \subseteq \sA$. We introduce essential notation for general concrete subalgebras of $\mn{\vec{n}}$ and provide the complete characterization of regular inclusions in terms of their inclusion matrices (Theorem \ref{thm:char_regularity}), followed by the corresponding characterization for unitary regularity (Theorem \ref{thm:char_unitary_regularity}).
    
    \section{Preliminaries on Finite Dimensions}
    \label{sec:prelims}
    
The primary purpose of this section is to establish the notation and conventions used throughout the article. We also recall several standard results on finite-dimensional representations of $C^*$-algebras. Although these results can be found in standard references such as \cite{davidson_book}, we present them here in the form most convenient for our subsequent development.

\noindent\textbf{Notations.} Throughout this article, given $d, p \in\N$ we will use the notation $\mn{d} \otimes \mathbb{I}_p$ to denote the unital $C^*$-subalgebra of $\mn{dp}$ defined by
\[
\mn{d} \otimes \mathbb{I}_p
:=
\left\{
A\otimes \mathbb{I}_p
:=
\begin{pmatrix}
A & & & \\
& A & & \\
& & \ddots & \\
& & & A
\end{pmatrix}
:\;
A\in \mn{d}
\right\}
\subseteq \mn{dp},
\]
where the matrix has $p$ diagonal blocks, each equal to $A$.

We shall also use the following notation for block diagonal subalgebras. Let
\[
\mn{d_j} \otimes \mathbb{I}_{p_j}\subseteq \mn{d_j p_j},
\qquad j = 1, \dots, k.
\]
Then
\[
\bigoplus_{j = 1}^{k} \left( \mn{d_j} \otimes \mathbb{I}_{p_j} \right)
:=
\left\{
\begin{pmatrix}
A_1 & & & \\
& A_2 & & \\
& & \ddots & \\
& & & A_k
\end{pmatrix}
:\;
A_j\in \mn{d_j} \otimes \mathbb{I}_{p_j}\subseteq \mn{d_j p_j},
\; 1 \le j \le k
\right\},
\]
which is a unital $C^*$-subalgebra of $\mn{n}$, where
\[
n=\sum_{j=1}^{k} d_j p_j.
\]

The following proposition is straightforward.

\begin{prop}\label{prop:abs-iso-concrete}
The map
\begin{align*} 
\Phi:\mn{d} & \to \mn{d} \otimes \mathbb{I}_p \\ A &\mapsto A \otimes \mathbb{I}_p. 
\end{align*}
is a unital $^*$-isomorphism. Consequently, the unital $C^*$-algebra
$\mn{d} \otimes \mathbb{I}_p \subseteq \mn{dp}$ is abstractly
$^*$-isomorphic to $\mn{d}$.
\end{prop}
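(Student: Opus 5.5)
The plan is to check directly that $\Phi$ is a unital $^*$-homomorphism. After that, injectivity and surjectivity onto $\mn{d} \otimes \mathbb{I}_p$ are essentially immediate.

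The identification $A\otimes \mathbb{I}_p = \operatorname{diag}(A,\dots,A)$, with $p$ diagonal blocks, reduces every algebraic property to block-diagonal matrix arithmetic. First, linearity is clear, since each block of $\operatorname{diag}(\lambda A + B, \dots)$ is $\lambda A + B$. Next, products of block-diagonal matrices with equal block sizes are computed blockwise, so
\[
\Phi(A)\Phi(B)=\operatorname{diag}(AB,\dots,AB)=\Phi(AB).
\]
Similarly, the conjugate transpose of a block-diagonal matrix is the block-diagonal matrix of the blockwise adjoints, so $\Phi(A)^*=\Phi(A^*)$. Finally, $\Phi(\mathbb{I}_d)=\mathbb{I}_{dp}$, so $\Phi$ is unital.

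Surjectivity onto $\mn{d} \otimes \mathbb{I}_p$ holds by the very definition of that set as $\{A\otimes\mathbb{I}_p : A\in\mn{d}\}$. For injectivity, if $\Phi(A)=0$ then its first diagonal block $A$ is $0$.

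Thus $\Phi$ is a bijective unital $^*$-homomorphism, i.e. a unital $^*$-isomorphism; its inverse is automatically a $^*$-homomorphism. As a byproduct, the image $\mn{d}\otimes\mathbb{I}_p$ is a unital $^*$-subalgebra of $\mn{dp}$. It is norm-closed, being finite-dimensional, and hence a $C^*$-subalgebra abstractly isomorphic to $\mn{d}$.

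None of these steps is a genuine obstacle. The only point requiring a moment's care is the claim that block-diagonal multiplication and adjoints act blockwise, which follows from writing out matrix entries.
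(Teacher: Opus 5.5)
Your proposal is correct. The paper gives no proof at all, calling the statement ``straightforward,'' and your direct blockwise verification is exactly the routine check that remark points to. That check covers linearity, multiplicativity, $^*$-preservation, unitality, bijectivity onto the image, and closedness by finite-dimensionality.
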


\begin{theorem}[Artin-Wedderburn, \,{\cite[III.1.1]{davidson_book}}]\label{thm:artin-wedderburn}
    Let $\sB$ be a finite dimensional $C^*$-algebra. Then 
    \[
    \sB \cong \mn{d_1} \oplus \dots \oplus \mn{d_k}
    \]
    for some $d_1, \dots, d_k \in \N$.
\end{theorem}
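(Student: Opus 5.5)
The plan is the classical structure-theory proof: decompose $\sB$ along its center into minimal central projections, then show that each summand $\sB e$ is a simple finite-dimensional $C^*$-algebra, hence a full matrix algebra.

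\emph{Step 1 (center).} The center $\sZ(\sB)$ is a finite-dimensional commutative $C^*$-algebra. By the Gelfand representation, or directly by simultaneously diagonalizing its commuting self-adjoint elements, it is spanned by finitely many minimal projections $e_1,\dots,e_k$. These satisfy $e_ie_j=\delta_{ij}e_i$ and $\sum_j e_j=1$; in the non-unital case I would first adjoin a unit or note that a finite-dimensional $C^*$-algebra always has a unit. Then $\sB=\bigoplus_j \sB e_j$ as $C^*$-algebras, and each $\sB e_j$ has center $\C e_j$.

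\emph{Step 2 (simplicity).} Fix $j$ and let $J\subseteq \sB e_j$ be a nonzero closed two-sided ideal. Being a finite-dimensional $C^*$-algebra, $J$ is unital: take a maximal projection in $J$, or use an approximate unit, which stabilizes in finite dimensions. Its unit is a central projection of $\sB e_j$, and since the center is $\C e_j$, it equals $e_j$. Hence $J=\sB e_j$, so $\sB e_j$ is simple.

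\emph{Step 3 (simple implies matrix algebra).} This is the main step. Let $\sC=\sB e_j$. Choose a minimal nonzero projection $q\in\sC$; it exists because $\sC$ is finite-dimensional and self-adjoint elements have spectral projections in $\sC$.
\begin{itemize}
\item First I would show $q\sC q=\C q$. Any self-adjoint $x\in q\sC q$ has spectral projections below $q$, so by minimality $x$ is a scalar multiple of $q$.
\item Next, take a maximal family of mutually orthogonal minimal projections $q_1,\dots,q_d$, each Murray--von Neumann equivalent to $q$. To produce the partial isometries, use simplicity: the ideal generated by $q$ is all of $\sC$. So for any minimal $q'$ we have $q'\sC q\neq 0$, and for nonzero $v\in q'\sC q$ the element $v^*v\in q\sC q=\C q$, so $v$ rescales to a partial isometry from $q$ to $q'$.
\item Maximality, together with the same argument applied to a minimal subprojection of $e_j-\sum_i q_i$ (if that difference were nonzero), forces $\sum_i q_i=e_j$.
\item With partial isometries $v_i$ satisfying $v_i^*v_i=q$ and $v_iv_i^*=q_i$, the elements $E_{ab}=v_av_b^*$ form a system of matrix units. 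Every $x\in\sC$ expands as $x=\sum_{a,b}q_axq_b$ with $q_axq_b=v_a(v_a^*xv_b)v_b^*\in\C E_{ab}$, since $v_a^*xv_b\in q\sC q$.
\end{itemize}
This yields $\sC\cong\mn{d}$.

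Assembling the summands gives $\sB\cong\bigoplus_j\mn{d_j}$. I expect Step 3, specifically obtaining the equivalence of all minimal projections from simplicity, to be the only real obstacle. The rest is bookkeeping. Alternatively, the paper treats this as standard and simply cites \cite[III.1.1]{davidson_book}.
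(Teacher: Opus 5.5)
Your proposal is correct. It decomposes $\sB$ over the minimal central projections, shows each summand $\sB e_j$ is simple, and builds matrix units $E_{ab}=v_av_b^*$ from a maximal orthogonal family of (necessarily mutually equivalent) minimal projections, which is the standard structure-theory proof. Each step goes through as written; the only unstated detail, injectivity of $e_{ab}\mapsto E_{ab}$, is immediate because the $E_{ab}$ are nonzero matrix units. The paper gives no proof of its own and simply quotes the result from \cite[III.1.1]{davidson_book}, so there is no argument of the paper's to compare yours against.
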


\begin{lem}
    Let $\pi:\mn{d}\to \sB(\sH)$ be a unital faithful representation with $\dim (\sH) < \infty$. Then $\dim(\sH) = dp$ for some $p\in \N$ and there exist a unitary $u: \sH \to \C^d \otimes \C^p$ such that:
    \[
    u\pi(x)u^* = x \otimes \mathbb{I}_p.
    \]
\end{lem}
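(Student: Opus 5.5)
The plan is to use the matrix units of $\mn{d}$ to split $\sH$ into $d$ mutually orthogonal, isometric copies of a single subspace, and then pick an orthonormal basis adapted to this splitting. Let $\{e_{rs}\}_{r,s=1}^d$ be the standard matrix units of $\mn{d}$ and set $P_r:=\pi(e_{rr})$. Since $\pi$ is a unital $^*$-homomorphism, the $P_r$ are mutually orthogonal projections with $\sum_r P_r = \pi(\mathbb{I}_d)=I_\sH$. Faithfulness gives $P_r\neq 0$ for every $r$. Moreover, $V_{rs}:=\pi(e_{rs})$ satisfies $V_{rs}^*V_{rs}=P_s$ and $V_{rs}V_{rs}^*=P_r$, so $V_{rs}$ restricts to a unitary from $P_s\sH$ onto $P_r\sH$. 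Hence all the ranges $P_r\sH$ have a common dimension $p\geq 1$. Since $\sH=\bigoplus_r P_r\sH$, we get $\dim\sH = dp$.

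Next choose an orthonormal basis $\{\xi_1,\dots,\xi_p\}$ of $P_1\sH$ and transport it by setting $\xi_{r,t}:=\pi(e_{r1})\xi_t$ for $1\le r\le d$, $1\le t\le p$. By the previous step, $\{\xi_{r,t}\}_t$ is an orthonormal basis of $P_r\sH$, so $\{\xi_{r,t}\}_{r,t}$ is an orthonormal basis of $\sH$. Define $u:\sH\to\C^d\otimes\C^p$ on this basis by $u\,\xi_{r,t}:=\varepsilon_r\otimes\eta_t$, where $\{\varepsilon_r\}$ and $\{\eta_t\}$ are the standard bases of $\C^d$ and $\C^p$. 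This map carries an orthonormal basis onto an orthonormal basis, so it is unitary.

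Finally, check the intertwining relation on matrix units. Using $e_{rs}e_{s'1}=\delta_{ss'}e_{r1}$, we compute
\[
\pi(e_{rs})\xi_{s',t}=\delta_{ss'}\,\xi_{r,t}.
\]
This gives
\[
u\pi(e_{rs})u^*(\varepsilon_{s'}\otimes\eta_t)=\delta_{ss'}\,\varepsilon_r\otimes\eta_t=(e_{rs}\otimes\mathbb{I}_p)(\varepsilon_{s'}\otimes\eta_t).
\]
Both $x\mapsto u\pi(x)u^*$ and $x\mapsto x\otimes\mathbb{I}_p$ are linear and the $e_{rs}$ span $\mn{d}$, so $u\pi(x)u^*=x\otimes\mathbb{I}_p$ for all $x$. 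Under the identification $\C^d\otimes\C^p\cong\C^{dp}$ with lexicographic ordering, $x\otimes\mathbb{I}_p$ is a $p\times p$ block matrix with $d\times d$ entries. Matching the paper's block-diagonal convention from Proposition~\ref{prop:abs-iso-concrete}, where $x$ is repeated $p$ times along the diagonal, therefore requires one further fixed permutation unitary to reorder the basis (the \emph{canonical shuffle}). Composing $u$ with it keeps it unitary.

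The main point needing care is the first step: the equality of the ranks of the $P_r$ comes from the partial isometries $\pi(e_{rs})$, and the nonvanishing of $p$ comes from faithfulness. Injectivity of $\pi$ is automatic anyway because $\mn{d}$ is simple and $\pi$ is unital, so $\pi\ne 0$. The remaining steps are routine bookkeeping.
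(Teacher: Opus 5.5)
Your proof is correct and complete. The matrix units give $d$ mutually orthogonal, pairwise Murray--von Neumann equivalent projections $\pi(e_{rr})$ of a common rank $p\ge 1$. Transporting an orthonormal basis of $\pi(e_{11})\sH$ by the partial isometries $\pi(e_{r1})$ then yields the required unitary, and checking the relation on matrix units suffices by linearity.

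The paper gives no proof of this lemma. It is one of the standard facts in \S~\ref{sec:prelims} quoted without proof from Davidson, and your matrix-unit argument is exactly the standard one, so there is nothing further to compare.

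One cosmetic slip: in the lexicographic ordering, $x\otimes\mathbb{I}_p$ is a $d\times d$ array of scalar blocks $x_{rs}\mathbb{I}_p$, not a $p\times p$ array of $d\times d$ blocks. That is precisely why the canonical shuffle is needed to reach the paper's block-diagonal convention. Equivalently, you can list the basis $\xi_{r,t}$ with $t$ as the slow index.
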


\begin{prop}[cf. {\cite[Corollary III.1.2]{davidson_book}}]\label{Spatial-inclusion}
    Let $\sB$ be a unital $C^*$-algebra and let $\pi:\sB \to \mn{n}$ be a unital $*$-homomorphism for some $n \in \N$. Then there exist $d_1, \dots, d_k\in \N\text{ and } p_1, \dots, p_k \in \N\cup \{0\}$ with
    \[
    \sum_{j=1}^k d_j p_j = n,
    \]
    and a unitary $u \in \mn{n}$ such that
    \[
    u\pi(a)u^*
    =
    \bigoplus_{j=1}^k \left(\mn{d_j} \otimes \mathbb{I}_{p_j}\right),
    \qquad \text{for all } a \in \sB.
    \]
    In particular, every unital $C^*$-subalgebra of $\mn{n}$ is, up to conjugation by a unitary in $\mn{n}$, of the above form where each $p_j$'s can be chosen to be non-zero.
    Moreover, the vectors \[ \vec{d} := (d_1,\dots,d_k), \qquad \vec{p} := (p_1,\dots,p_k), \] called the \emph{dimension vector} and the \emph{multiplicity vector}, respectively, are uniquely determined by $\sB$ (equivalently, by the homomorphism $\pi$) up to a simultaneous permutation of their entries.
\end{prop}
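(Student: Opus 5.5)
The plan is to decompose $\sB$ via Theorem \ref{thm:artin-wedderburn}, decompose $\pi$ into its isotypic pieces using the preceding Lemma, and reassemble. First, by Theorem \ref{thm:artin-wedderburn} there is a $*$-isomorphism $\theta:\sB\to\bigoplus_{j=1}^k\mn{d_j}$, and we may replace $\pi$ by $\pi\circ\theta^{-1}$. Let $e_j\in\sB$ be the central minimal projections, with $e_j$ the unit of the $j$th summand, so that $\sum_j e_j=1$ and $e_je_{j'}=0$ for $j\neq j'$. Put $P_j:=\pi(e_j)$. These are pairwise orthogonal projections in $\mn{n}$ summing to $\mathbb{I}_n$, because $\pi$ is unital. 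So $\C^n=\bigoplus_j \sH_j$ with $\sH_j:=P_j\C^n$, and each $\sH_j$ reduces $\pi(\sB)$.

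The restriction $\pi_j:=\pi|_{\mn{d_j}}(\cdot)|_{\sH_j}$ is a unital $*$-homomorphism $\mn{d_j}\to\sB(\sH_j)$. If $\sH_j=0$, set $p_j=0$. Otherwise, since $\mn{d_j}$ is simple, $\pi_j$ is injective and hence faithful. The preceding Lemma then gives $\dim\sH_j=d_jp_j$ with $p_j\ge1$, together with a unitary $u_j:\sH_j\to\C^{d_j}\otimes\C^{p_j}$ satisfying $u_j\pi_j(x)u_j^*=x\otimes\mathbb{I}_{p_j}$. Define $u:=\bigoplus_j u_j$, composed with a fixed identification of $\bigoplus_j\C^{d_j}\otimes\C^{p_j}$ with $\C^n$. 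This is a unitary in $\mn{n}$. Then $\sum_j d_jp_j=n$, and $u\pi(\sB)u^*=\bigoplus_j(\mn{d_j}\otimes\mathbb{I}_{p_j})$, where the summand for $j$ is absent when $p_j=0$. For a unital subalgebra of $\mn{n}$, apply this to the inclusion map. The inclusion is injective, so no $P_j$ vanishes, and therefore all $p_j\neq0$.

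Uniqueness is the step requiring care. The plan is to recover $d_j$ and $p_j$ intrinsically. The multiset of $d_j$ is determined by $\sB$ via the simple summands; this is the uniqueness part of Artin--Wedderburn, since the $d_j^2$ are the dimensions of the minimal central ideals $\sB e_j$. For $p_j$, compute the rank of $\pi(q)$ for a minimal projection $q\in\sB e_j$. In the standard form, $q$ corresponds to $E_{11}\otimes\mathbb{I}_{p_j}$, which has rank $p_j$. Rank is invariant under unitary conjugation, and all minimal projections in $\sB e_j$ are unitarily equivalent within $\sB$, so $p_j$ is independent of the choices made. Thus the pairs $(d_j,p_j)$ are attached to the minimal central projections of $\sB$. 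The only remaining freedom is the labelling of those projections, which yields the simultaneous permutation.

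The main obstacle is bookkeeping rather than depth. One point is handling $p_j=0$, where $\pi$ is non-faithful and kills summands. The other is making sure the uniqueness statement is phrased in terms of invariants, namely the central projections and the ranks of minimal projections, rather than in terms of the chosen isomorphism $\theta$.
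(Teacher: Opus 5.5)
The paper gives no proof of this proposition and simply cites Davidson, Cor.~III.1.2. Your argument is the standard one: Artin--Wedderburn, cutting $\pi$ down by the projections $\pi(e_j)$ coming from the minimal central projections, and applying the preceding Lemma to each nonzero piece. The existence part and the treatment of $p_j=0$ are correct. So is the intrinsic recovery of $(d_j,p_j)$, via $\dim(\sB e_j)=d_j^2$ and $p_j=\operatorname{rank}\pi(q)$ for a minimal projection $q\le e_j$.

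The gap is in your very first step. The proposition only assumes that $\sB$ is a unital $C^*$-algebra, and Theorem~\ref{thm:artin-wedderburn} cannot be applied to $\sB$ unless $\sB$ is finite-dimensional (e.g.\ $\sB=C[0,1]$ with $\pi(f)=f(0)\,\mathbb{I}_n$).

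The repair is short. $\pi(\sB)$ is a unital $*$-subalgebra of $\mn{n}$, hence a finite-dimensional unital $C^*$-subalgebra, and $\pi$ factors as $\sB\to\pi(\sB)\hookrightarrow\mn{n}$. Running your argument on the inclusion $\pi(\sB)\subseteq\mn{n}$ gives the conclusion with every $p_j\ge1$.

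In this generality the uniqueness can only refer to $\pi(\sB)$, i.e.\ to $\pi$, not to $\sB$. The version with zero multiplicities, where $\vec d$ lists all Wedderburn summands of $\sB$ including those killed by $\pi$, makes sense exactly when $\sB$ is finite-dimensional. That is presumably the intended reading, and it is the case your proof handles.

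Also, once $p_j=0$ is allowed, the bare equation $u\pi(\sB)u^*=\bigoplus_j(\mn{d_j}\otimes\mathbb{I}_{p_j})$ cannot pin down $\vec d$, since one may append pairs $(d,0)$. Attaching $(d_j,p_j)$ to the minimal central projections of $\sB$, as you do, is the right way to make the uniqueness claim precise.
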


Below we record a well-known result about unital inclusion of finite dimensional $C^*$-algebras.
\begin{cor}
    Let $\sB$ be a unital (abstract) finite-dimensional $C^*$-algebra. Then there exists a unital embedding
    \[
    \pi:\sB \to \mn{n}
    \]
    for some $n \in \N$. Consequently, up to conjugation by a unitary $u \in \mn{n}$,
    \[
    u\pi(\sB)u^*
    =
    \bigoplus_{j=1}^k \left(\mn{d_j} \otimes \mathbb{I}_{p_j}\right),
    \]
    where $d_1, \dots, d_k, p_1, \dots, p_k \in \N$ satisfy
    \[
    \sum_{j=1}^k d_j p_j = n.
    \]
\end{cor}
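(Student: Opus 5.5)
The statement has two parts: a unital embedding $\pi:\sB\to\mn{n}$ exists, and its image can be conjugated into the standard block form with every multiplicity nonzero. I will prove them in that order.

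\textbf{Existence of an embedding.} By Theorem \ref{thm:artin-wedderburn} there is a $^*$-isomorphism $\theta:\sB\to \mn{d_1}\oplus\cdots\oplus\mn{d_k}$ for some $d_1,\dots,d_k\in\N$. Set $n:=d_1+\cdots+d_k$ and define
\[
\iota(A_1,\dots,A_k):=\begin{pmatrix} A_1 & & \\ & \ddots & \\ & & A_k\end{pmatrix}\in\mn{n}.
\]
This block-diagonal map is a $^*$-homomorphism, since multiplication and adjoints are computed blockwise. It is injective and sends $(\mathbb{I}_{d_1},\dots,\mathbb{I}_{d_k})$ to $\mathbb{I}_n$, so it is unital. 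Hence $\pi:=\iota\circ\theta$ is a unital faithful $^*$-homomorphism $\sB\to\mn{n}$.

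\textbf{Standard form.} Applying Proposition \ref{Spatial-inclusion} to $\pi$ gives a unitary $u\in\mn{n}$, dimensions $d_j'\in\N$ and multiplicities $p_j\in\N\cup\{0\}$ with $\sum_j d_j'p_j=n$ and
\[
u\pi(\sB)u^*=\bigoplus_j\left(\mn{d_j'}\otimes\mathbb{I}_{p_j}\right).
\]

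\textbf{Removing zero multiplicities.} This is the only point that needs care, since Proposition \ref{Spatial-inclusion} allows $p_j=0$. A summand with $p_j=0$ is the zero block of size $0$. It contributes nothing to the direct sum and nothing to $\sum_j d_j'p_j$. I discard all such indices and relabel the remaining ones.

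Faithfulness of $\pi$ guarantees no information is lost by doing this. If the minimal central projection of the $j$th simple summand of $\sB$ had multiplicity zero, it would lie in $\ker\pi$, which is trivial. Hence every simple summand of $\sB$ appears with a positive multiplicity. This is exactly the ``in particular'' clause of Proposition \ref{Spatial-inclusion} for subalgebras, applied to $\pi(\sB)\cong\sB$.

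After relabeling, all $d_j,p_j\in\N$ and $\sum_j d_jp_j=n$, which is the claimed form.
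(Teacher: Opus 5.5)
Your proof is correct and takes the same route as the paper, which just cites the Artin--Wedderburn theorem (Theorem~\ref{thm:artin-wedderburn}) and Proposition~\ref{Spatial-inclusion}; you add the details the paper leaves implicit, namely the explicit block-diagonal embedding and the discarding of zero multiplicities. Note that your particular $\pi$ is already in standard form, with $u=\mathbb{I}_n$ and all $p_j=1$, so Proposition~\ref{Spatial-inclusion} is only really needed if one wants the conclusion for an arbitrary unital embedding.
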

\begin{proof}
    It follows directly from Artin-Wedderburn theorem and the preceeding proposition.
\end{proof}

\begin{prop}[{\cite[Corollary III.1.2]{davidson_book}}]\label{concrete-inclusion}
    Let $\pi:\sB= \mn{d_1}\oplus \dots \oplus \mn{d_k}\to \mn{n}$ be an unital $*$-homomorphism. Then there exist $p_1, \dots, p_k\in \N \cup \{0\}$ with $\sum_{j} d_j p_j = n$ and $u \in \sU(\mn{n})$ such that 
    \[
        u\pi(\sB)u^* = \bigoplus_{j= 1}^k \mn{d_j} \otimes \mathbb{I}_{p_j}.
    \] 
\end{prop}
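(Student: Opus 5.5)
The plan is to reduce this to the structure theory of representations of the full matrix algebras $\mn{d_j}$, using the minimal central projections of $\sB$. Let $\pi:\sB=\mn{d_1}\oplus\dots\oplus\mn{d_k}\to\mn{n}$ be unital, and let $z_1,\dots,z_k$ be the central projections of $\sB$, where $z_j$ is the unit of the $j$th summand. Put $P_j:=\pi(z_j)$. Because $\pi$ is a unital $*$-homomorphism, the $P_j$ are mutually orthogonal projections in $\mn{n}$ summing to $\mathbb{I}_n$. Hence $\C^n=\bigoplus_j P_j\C^n$, and each $\pi(\sB)$ commutes with every $P_j$.

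Next I restrict $\pi$ to each summand. Set $\pi_j:=\pi|_{\mn{d_j}}$, viewed as a map into $\sB(P_j\C^n)$. It is a unital $*$-homomorphism there, since it sends the unit $z_j$ to $P_j$.

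1. If $P_j=0$, set $p_j=0$.
2. If $P_j\neq0$, then $\pi_j$ is faithful, because $\mn{d_j}$ is simple and $\ker\pi_j$ is a proper ideal. The preceding lemma then gives $\dim P_j\C^n=d_jp_j$ for some $p_j\in\N$. It also gives a unitary $u_j:P_j\C^n\to\C^{d_j}\otimes\C^{p_j}$ with $u_j\pi_j(x)u_j^*=x\otimes\mathbb{I}_{p_j}$.

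Summing dimensions yields $\sum_j d_jp_j=n$.

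To assemble the unitary, define $u:=\bigoplus_j u_j$ from $\bigoplus_jP_j\C^n=\C^n$ onto $\bigoplus_j(\C^{d_j}\otimes\C^{p_j})=\C^n$. This is unitary because it is a direct sum of unitaries between orthogonal decompositions of $\C^n$. Regarded as an element of $\mn{n}$, it is the composition with the standard identification.

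For $a=(x_1,\dots,x_k)$ we have $\pi(a)=\sum_j\pi_j(x_j)$, with $\pi_j(x_j)$ supported on $P_j\C^n$. Therefore
\[
u\pi(a)u^*=\operatorname{diag}(x_1\otimes\mathbb{I}_{p_1},\dots,x_k\otimes\mathbb{I}_{p_k}).
\]
As $a$ ranges over $\sB$, the $x_j$ range independently over $\mn{d_j}$. This gives exactly $u\pi(\sB)u^*=\bigoplus_j\mn{d_j}\otimes\mathbb{I}_{p_j}$, with blocks of size zero omitted when $p_j=0$.

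The only delicate point is the faithfulness/zero dichotomy in step 2, which rests on the simplicity of $\mn{d_j}$; everything else is bookkeeping.
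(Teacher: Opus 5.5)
Your proof is correct and is essentially the standard argument. The paper gives no proof of its own and simply cites Davidson's Corollary III.1.2, whose proof is exactly what you do: split $\mathbb{C}^n$ by the projections $\pi(z_j)$, use simplicity of each $\mathbb{M}_{d_j}(\mathbb{C})$ to get faithfulness on the nonzero pieces, and apply the multiplicity lemma for faithful representations of $\mathbb{M}_d(\mathbb{C})$ stated in the preliminaries. The one step you leave implicit is that $\pi(x)=P_j\pi(x)P_j$ for $x$ in the $j$th summand, which is what makes $\pi_j$ land in $\mathcal{B}(P_j\mathbb{C}^n)$; it follows at once from $x=z_jxz_j$.
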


\begin{prop}[\cite{davidson_book}]\label{prop:rep_of_fin_alg}
    The inequivalent irreducible representation of the finite dimensional algebra $\sB := \oplus_{j = 1}^{k} \mn{d_j}$ are precisely:
    \[
    \pi_j:\sB \to \mn{d_j} \text{ for } j = 1, \dots, k.
    \]
\end{prop}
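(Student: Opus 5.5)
The statement is Proposition~\ref{prop:rep_of_fin_alg}: the irreducible representations of $\sB=\oplus_{j=1}^k \mn{d_j}$ are, up to unitary equivalence, exactly the coordinate projections $\pi_j$, and these are pairwise inequivalent. I will work with finite-dimensional representations throughout, which is all the paper needs.

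\textbf{Step 1: the $\pi_j$ are irreducible and pairwise inequivalent.} Irreducibility of $\pi_j$ is immediate, since $\pi_j(\sB)=\mn{d_j}$ has trivial commutant $\C\,\mathbb{I}_{d_j}$. For inequivalence, let $e_j$ denote the central projection $0\oplus\cdots\oplus \mathbb{I}_{d_j}\oplus\cdots\oplus 0$. Then $\pi_j(e_i)=\delta_{ij}\mathbb{I}_{d_j}$. Conjugation by a unitary preserves whether an operator is zero, so for $i\neq j$ the representations $\pi_i$ and $\pi_j$ cannot be equivalent.

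\textbf{Step 2: every irreducible representation is some $\pi_j$.} Let $\pi:\sB\to \sB(\sH)$ be irreducible, with $\dim\sH<\infty$.
\begin{itemize}
\item[(a)] The operators $\pi(e_j)$ are mutually orthogonal projections that commute with $\pi(\sB)$, because each $e_j$ is central. Their sum is $\pi(1)$.
\item[(b)] The projection $\pi(1)$ commutes with $\pi(\sB)$, so irreducibility forces $\pi(1)\in\{0,\mathbb{I}\}$. Since $\pi$ is nonzero, $\pi(1)=\mathbb{I}$, i.e.\ $\pi$ is unital.
\item[(c)] By Schur's lemma each $\pi(e_j)$ lies in $\pi(\sB)'=\C\,\mathbb{I}$, so it is $0$ or $\mathbb{I}$. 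By orthogonality and (b), exactly one index $j$ has $\pi(e_j)=\mathbb{I}$.
\item[(d)] The ideal $\ker\pi$ then contains every summand $\mn{d_i}$ with $i\ne j$. Hence $\pi=\rho\circ\pi_j$, where $\rho:\mn{d_j}\to\sB(\sH)$ is a unital representation.
\item[(e)] The map $\rho$ is faithful because $\mn{d_j}$ is simple and $\rho\neq 0$. The Lemma above then gives a unitary $u$ with $u\rho(x)u^*=x\otimes\mathbb{I}_p$.
\item[(f)] The commutant of $\mn{d_j}\otimes\mathbb{I}_p$ contains $\mathbb{I}_{d_j}\otimes\mn{p}$. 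Irreducibility therefore forces $p=1$, so $\pi\simeq\pi_j$.
\end{itemize}

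\textbf{Main obstacle.} The only substantive point is Step 2(c): justifying that a central projection in an irreducible representation acts as $0$ or $\mathbb{I}$. This is Schur's lemma, which I will take as standard. The identification $\pi(\sB)'=\C\,\mathbb{I}$ holds via the double commutant theorem in finite dimensions, or simply because any projection in the commutant has an invariant range. If one also wants to cover infinite-dimensional $\sH$, it suffices to note that $\pi(\sB)$ is finite-dimensional. Any cyclic subspace $\pi(\sB)\xi$ is then finite-dimensional and invariant, so irreducibility forces $\dim\sH<\infty$.
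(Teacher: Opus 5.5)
Your proof is correct. The paper gives no argument for this proposition; it is simply quoted from \cite{davidson_book}, so there is no in-paper proof to compare against. What you supply is the standard self-contained derivation. You cut down by the central projections $e_j$ to show that an irreducible $\pi$ factors through a single $\pi_j$. You then invoke the paper's preceding Lemma, which is likewise stated without proof: a unital faithful representation of $\mn{d_j}$ is unitarily equivalent to $x\mapsto x\otimes\mathbb{I}_p$. Finally, irreducibility rules out $p\ge 2$ because the commutant contains $\mathbb{I}_{d_j}\otimes\mn{p}$.

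Two small remarks.

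First, as you partly observe, Steps 2(b), 2(c) and 2(f) only need the projection form of Schur's lemma: a projection commuting with $\pi(\sB)$ has invariant range, hence is $0$ or $\mathbb{I}$. For (f), apply this to $\mathbb{I}_{d_j}\otimes e_{11}$. So neither the full statement $\pi(\sB)'=\C\,\mathbb{I}$ nor the double commutant theorem is actually needed.

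Second, your argument is self-contained only modulo that Lemma on representations of $\mn{d}$. If you want to avoid it, pick a unit vector $\xi\in\rho(e_{11})\sH$, which is possible because $\rho\neq 0$. The vectors $\rho(e_{i1})\xi$, $1\le i\le d_j$, are orthonormal. Their span is $\rho(\mn{d_j})$-invariant, since $\rho(e_{ab})\rho(e_{i1})\xi=\delta_{bi}\,\rho(e_{a1})\xi$. Sending the standard basis of $\C^{d_j}$ to this orthonormal family intertwines the identity representation with $\rho$ restricted to the span. Irreducibility then forces this span to be all of $\sH$, giving $\pi\simeq\pi_j$ directly without any multiplicity bookkeeping.
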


Next, we recall the definition of a normalizer matrix from \cite{Bakshi2026} to establish its equivalence with our notion of a regular matrix.

\begin{definition}\cite[Definition 3.1]{Bakshi2026}
   A matrix $\Lambda=[\Lambda_{ij}]$ is called a \emph{normalizer matrix} if it satisfies the following conditions:
   \begin{enumerate}
       \item[(i)] In each row of $\Lambda$, all nonzero entries are equal.
       \item[(ii)] For any two rows $i$ and $k$, if there exists a column $j$ such that $\Lambda_{ij}=0$ and $\Lambda_{kj}\neq 0$, then for every column $l$,
       \[
       \Lambda_{il}\neq 0 \implies \Lambda_{kl}=0.
       \]
   \end{enumerate}
\end{definition}

\begin{prop}\label{prop:reg_mat-iff-normaliser_mat}
 A matrix $\Lambda$ is regular if and only if it is a normalizer matrix.
\end{prop}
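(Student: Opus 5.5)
Condition (i) is identical in the two definitions, so the plan is to show that, given (i), condition (ii) of Definition \ref{def:regular_mat} is equivalent to condition (ii) of the normalizer matrix. The common reformulation will be a statement about row supports: \emph{any two row supports $S_i$ and $S_k$ are either equal or disjoint}. The proof then splits into showing that each definition's condition (ii) is equivalent to this ``equal or disjoint'' property.

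For the regular-matrix side, write $T_j := \{i : \Lambda_{ij}>0\}$ for the support of column $j$. Note that $j\in S_i$ exactly when $i\in T_j$.
\begin{itemize}
\item[$(\Rightarrow)$] Assume condition (ii) of Definition \ref{def:regular_mat}, and suppose $S_i\cap S_k\neq\emptyset$, say $j_0\in S_i\cap S_k$. For any $j\in S_i$, columns $j$ and $j_0$ are identical. Since $\Lambda_{kj_0}>0$, we get $\Lambda_{kj}>0$, so $j\in S_k$. Hence $S_i\subseteq S_k$, and by symmetry $S_i=S_k$.
\item[$(\Leftarrow)$] Assume the equal-or-disjoint property, and let $j,j'\in S_i$. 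For any row $k$ with $\Lambda_{kj}>0$ we have $j\in S_k\cap S_i$, so $S_k=S_i\ni j'$ and $\Lambda_{kj'}>0$. Thus $T_j=T_{j'}$. For each $k\in T_j$, condition (i) gives $\Lambda_{kj}=p_k=\Lambda_{kj'}$. Outside $T_j$ both entries vanish. Hence $\vec\Lambda_j=\vec\Lambda_{j'}$.
\end{itemize}
The $(\Leftarrow)$ direction is the one place condition (i) is genuinely needed: equal supports alone would not give equal columns.

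For the normalizer side, condition (ii) says: if $S_k\not\subseteq S_i$, then $S_i\cap S_k=\emptyset$.
\begin{itemize}
\item[$(\Leftarrow)$] If $S_i, S_k$ are equal or disjoint and some $j\in S_k\setminus S_i$ exists, then $S_i\neq S_k$, so they are disjoint, which is exactly the normalizer implication.
\item[$(\Rightarrow)$] Assume the normalizer condition (ii) and suppose $S_i\cap S_k\neq\emptyset$. If $S_k\not\subseteq S_i$, the condition forces disjointness, a contradiction; so $S_k\subseteq S_i$. Applying the condition with the roles of $i$ and $k$ swapped gives $S_i\subseteq S_k$, so $S_i=S_k$.
\end{itemize}
Combining the two equivalences through the equal-or-disjoint property proves the proposition.

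The main obstacle is bookkeeping rather than depth. First, the normalizer condition is stated for any ordered pair of rows, so the symmetric application in its $(\Rightarrow)$ direction must be made explicit. Second, one must remember to invoke condition (i) in the regular-matrix $(\Leftarrow)$ direction, as noted above.
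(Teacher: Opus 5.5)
Your proof is correct, but it is organized differently from the paper's. The paper argues each implication directly by contradiction, without any auxiliary notion.
\begin{itemize}
\item For ($\Rightarrow$), it observes that if $\Lambda_{ij}=0\neq\Lambda_{kj}$ and some column $t$ had $\Lambda_{it}\neq0\neq\Lambda_{kt}$, then $j,t\in S_k$ would force $\vec{\Lambda}_j=\vec{\Lambda}_t$, which is impossible.
\item For ($\Leftarrow$), it takes $j,j'\in S_i$ with $\vec{\Lambda}_j\neq\vec{\Lambda}_{j'}$ and picks a row $k$ with, ``without loss of generality'', $\Lambda_{kj}\neq0=\Lambda_{kj'}$. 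It then applies the normalizer condition to rows $k,i$ and column $j'$, and obtains a contradiction at column $j$.
\end{itemize}

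You instead route both directions through the intermediate property that any two row supports are equal or disjoint. You show that normalizer condition (ii) is equivalent to this property outright. You also show that regular condition (ii) implies it, while the converse needs condition (i).

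The paper's route is shorter. Yours buys transparency in three ways:
\begin{itemize}
\item It pinpoints exactly where condition (i) is used. The paper's ``without loss of generality'' step silently relies on (i) to rule out a row $k$ where $\Lambda_{kj}$ and $\Lambda_{kj'}$ are both nonzero but different.
\item Recasting the normalizer condition as disjointness of $S_i$ and $S_k$ makes its symmetry in the two rows evident.
\item The equal-or-disjoint structure is exactly what the paper later uses to permute $\Lambda$ into block-diagonal form in the proof of Theorem \ref{thm:char_regularity}.
\end{itemize}

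One small point: in your regular-side ($\Leftarrow$) you only prove $T_j\subseteq T_{j'}$. You should add ``by symmetry in $j,j'$'' before concluding $T_j=T_{j'}$.
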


\begin{proof}
\noindent ($\implies$) Suppose $\Lambda$ is a regular matrix. Condition (i) of a normalizer matrix holds trivially by definition. To verify condition (ii), suppose, for the sake of contradiction, that there exist rows $i$ and $k$, and a column $j$ such that $\Lambda_{ij}=0$ and $\Lambda_{kj}\neq 0$, yet the implication fails. This means there exists some column $t$ such that both $\Lambda_{it}\neq 0$ and $\Lambda_{kt}\neq 0$. Since $\Lambda_{kj}\neq 0$ and $\Lambda_{kt}\neq 0$, both indices $j$ and $t$ belong to the row support $S_k$. Because $\Lambda$ is a regular matrix, we must have identical columns $\vec{\Lambda}_{j}= \vec{\Lambda}_{t}$. However, this is a contradiction since $\Lambda_{ij}=0$ while $\Lambda_{it}\neq 0$. Thus, $\Lambda$ is a normalizer matrix.\\

\noindent ($\impliedby$) Conversely, suppose that $\Lambda$ is a normalizer matrix. It clearly satisfies condition (i) of a regular matrix. Suppose for a contradiction that it is not regular, meaning condition (ii) fails. Then there exists a row $i$ for which there exist distinct indices $j,j'\in S_i$ satisfying $\vec{\Lambda}_j\neq\vec{\Lambda}_{j'}$. This implies there is some row $k$ where the columns differ; without loss of generality, assume $\Lambda_{kj}\neq 0$ and $\Lambda_{kj'}=0$. Moreover, since $j,j'\in S_i$, we have $\Lambda_{ij}=\Lambda_{ij'}\neq 0$.   
Now, applying the second condition of the normalizer matrix definition to the pair of rows $k$ and $i$, and the column $j'$, we have $\Lambda_{kj'}=0$ and $\Lambda_{ij'}\neq 0$. This dictates that for every column $t$, if $\Lambda_{it}\neq 0$, then $\Lambda_{kt}=0$. However, taking $t=j$, we know $\Lambda_{ij}\neq 0$ but $\Lambda_{kj}\neq 0$, which yields a contradiction. Therefore, $\Lambda$ is a regular matrix.
\end{proof}
    
    \section{Regularity, Unitary Regularity and Groupoid Regularity}
    \label{sec:basic_definitions}

In this section, we introduce the three notions of regularity considered in this paper. The first is \emph{unitary regularity}, in the sense of Dixmier \cite{Dix1954}; the second is \emph{groupoid regularity} \cite{Renaultbook}; and the third is \emph{regularity}, in the sense of Renault (see \cite{Renaultbook}, \cite{Renault2008CartanSI}). We begin by recalling the notion of the normaliser of a $C^*$-subalgebra, which is needed to define these concepts.

Let $\sB \subset \sA$ be an inclusion of $C^*$-algebras. We define the normaliser for arbitrary inclusions. For the groupoid and unitary normalisers, we shall assume that $\sA$ is finite-dimensional.

\begin{definition}[Normaliser]
The \emph{normaliser} of $\sB$ in $\sA$ is
\[
\nor{\sA}(\sB)
=
\{\, n \in \sA \mid n\sB n^*  \cup n^*\sB n \subseteq \sB \,\}.
\]
It is easily seen that this set is a norm closed $*$-semigroup containing $\sB$.
\end{definition}

Assume henceforth that $\sA$ is finite-dimensional.

\begin{definition}[Groupoid Normaliser]
The \emph{groupoid normaliser} of $\sB$ in $\sA$ is
\[
\grnor{\sA}(\sB)
=
\{\, v \in \nor{\sA}(\sB) \mid vv^*v = v \,\},
\]
that is, the set of partial isometries in $\nor{\sA}(\sB)$. 
\end{definition}

\begin{remark}
Note that if $\sB$ is abelian then $\grnor{\sA} (\sB)$ is an \emph{inverse semigroup} with adjoint serving the inverse operation (see \cite[Proposition 2.13]{cartan_triples}).
\end{remark}

\begin{definition}[Unitary Normaliser]
The \emph{unitary normaliser} of $\sB$ in $\sA$ is
\[
\unor{\sA}(\sB)
=
\{\, u \in \sU(\sA) \mid u\sB u^* = \sB \,\}.
\]
\end{definition}

Note that $\unor{\sA} (\sB) = \sU(\sA) \cap \nor{\sA}(\sB) \subseteq \grnor{\sA}(\sB)$.

\begin{prop}[{\cite[Remark 2.2]{cartan_triples}}]\label{prop:abel-equi-reg}
    Let $\mathcal{B}\subset \mathcal{A}$ be a inclusion of finite-dimensional $C^*$-algebras with $\sB$ abelian. Then 
    \[
    \spn (\unor{\sA} (\sB)) = \spn (\grnor{\sA}(\sB)).
    \]
\end{prop}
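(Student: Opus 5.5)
Since $\unor{\sA}(\sB)\subseteq \grnor{\sA}(\sB)$, the inclusion $\spn(\unor{\sA}(\sB))\subseteq \spn(\grnor{\sA}(\sB))$ is immediate. The substance is the reverse inclusion: every partial isometry $v$ normalising the abelian algebra $\sB$ must be shown to be a linear combination of unitary normalisers. My plan is to write $\sB=\spn\{q_1,\dots,q_m\}$, where the $q_r$ are the minimal projections of $\sB$; these are orthogonal and sum to $1$, by finite-dimensionality and abelianness. I will then decompose $v$ along these projections and extend each piece to a unitary normaliser.

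\emph{Step 1.} For $v\in\grnor{\sA}(\sB)$, the initial and final projections $v^*v=v^*1v$ and $vv^*=v1v^*$ lie in $\sB$. Moreover, for each minimal $q_r$, the element $vq_rv^*$ is a projection in $\sB$, because $(vq_rv^*)^2=vq_r(v^*v)q_rv^*=vq_rv^*$, using that $\sB$ is abelian so $q_r$ commutes with $v^*v\in\sB$. Write $v=\sum_r vq_r$. Each nonzero summand $w_r:=vq_r$ is a partial isometry with $w_r^*w_r=q_r v^*v q_r\in\{0,q_r\}$ by minimality. Then $w_rw_r^*=vq_rv^*$ is a nonzero projection in $\sB$, and it must be minimal. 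Indeed, if $vq_rv^*\ge e$ with $e\in\sB$ a nonzero projection, then $q_rv^*ev q_r$ is a nonzero subprojection of $q_r$ in $\sB$, so it equals $q_r$, which forces $e=vq_rv^*$. So $w_r$ is a partial isometry from $q_r$ onto some minimal $q_{\sigma(r)}$. The map $\sigma$ is injective on the support, since distinct $vq_rv^*$ are orthogonal ($vq_rv^*vq_sv^*=vq_rq_sv^*v^*\!\!\ldots=0$ by commutativity).

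\emph{Step 2.} It therefore suffices to show that each partial isometry $w$ in $\sA$ with $w^*w=q_r$ and $ww^*=q_s$ (minimal projections of $\sB$) lies in $\spn(\unor{\sA}(\sB))$. If $r=s$, then $w$ is a unitary in $q_r\sA q_r$. Put $u_\pm=w\pm ( \text{any unitary completion})$; concretely take $u_\pm=w\pm(1-q_r)$. Then $w=\tfrac12(u_++u_-)$. Each $u_\pm$ normalises $\sB$: it maps $q_t$ to $q_t$ for $t\neq r$, and $u_\pm q_r u_\pm^*=wq_rw^*=q_r$. If $r\neq s$, take $u=w+w^*+(1-q_r-q_s)$. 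This is a self-adjoint unitary because $w^*w=q_r$, $ww^*=q_s$, and $q_r\perp q_s$. It conjugates $q_r\leftrightarrow q_s$ and fixes the other $q_t$, hence normalises $\sB$. Also $u'=iw-iw^*+(1-q_r-q_s)$ is a unitary normaliser, and we recover $w$ from $u-(1-q_r-q_s)=w+w^*$ and $-i(u'-(1-q_r-q_s))=w-w^*$. The elements $1-q_r-q_s$ and $1-q_r$ are themselves in $\spn(\unor{\sA}(\sB))$, being sums of the self-adjoint unitaries $1-2q_t\in\unor{\sA}(\sB)$ and $1$. Hence $w$ lies in the span in both cases.

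\emph{Main obstacle.} The one delicate point is Step 1's claim that $vq_rv^*$ is a \emph{minimal} projection of $\sB$, which is what lets each piece be extended to a unitary permuting the $q_t$. I expect the minimality argument above, which relies on $\sB$ being abelian so that everything commutes with $v^*v$, to settle it. An alternative is to cite \cite[Remark 2.2]{cartan_triples} directly.
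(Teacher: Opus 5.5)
Your proof is correct in the unital setting (see the caveat below). The paper gives no argument of its own here; it simply quotes the result from the cited reference, so your proof is a genuinely different, self-contained route.

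You cut $v\in\grnor{\sA}(\sB)$ by the minimal projections $q_r$ of $\sB$. Abelianness makes $q_r$ commute with $v^*v\in\sB$, so each nonzero $vq_r$ is a partial isometry from $q_r$ onto another minimal projection $q_{\sigma(r)}$. You then express each such piece through the explicit unitary normalisers $w+w^*+(1-q_r-q_s)$, $iw-iw^*+(1-q_r-q_s)$, $w\pm(1-q_r)$ and $1-2q_t$. All of these verifications hold, including your minimality argument for $vq_rv^*$.

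A common alternative is to extend $v$ itself to a unitary normaliser $u$ with $v=u\,v^*v$, and then expand $v^*v$ in unitaries of $\sB$. The extension exists because each $q_r$ with $vq_r\neq 0$ is Murray--von Neumann equivalent in $\sA$ to $q_{\sigma(r)}$, so the minimal projections under $1-v^*v$ and under $1-vv^*$ can be matched class by class. That route gives a stronger structural fact: every groupoid normaliser is a unitary normaliser times a projection of $\sB$. It needs this counting step, which your piecewise decomposition avoids, since you only ever complete a partial isometry between two minimal projections, where the completion is explicit. Your Step 1 also shows exactly where abelianness enters; the analogous statement fails for $\C\oplus\mn{2}\subseteq\mn{3}$.

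Two minor points. First, you tacitly assume $1_{\sA}\in\sB$: you use $\sum_r q_r=1$ and $v^*v=v^*1v\in\sB$. The paper only applies the proposition to unital inclusions (Corollary~\ref{abelian case}), but the statement as printed does not say \emph{unital}. To cover it literally, put $q_0=1-1_{\sB}$.
\begin{itemize}
\item From $v^*q_sv,\ vq_sv^*\in\sB=\spn\{q_1,\dots,q_m\}$ you get $q_0v^*q_svq_0=0=q_0vq_sv^*q_0$. Hence $q_svq_0=0=q_0vq_s$, so $v=q_0vq_0+v1_{\sB}$ and $v$ commutes with $1_{\sB}$.
\item In Step 1, $v^*v\,1_{\sB}=v^*1_{\sB}v\in\sB$ replaces $v^*v$, and Step 2 goes through verbatim.
\item The remaining piece $q_0vq_0$ lies in the span of the unitary normalisers $x+1_{\sB}$, with $x$ unitary in $q_0\sA q_0$, together with $1_{\sB}=\sum_t q_t$.
\end{itemize}
Second, the displayed orthogonality computation in Step 1 is garbled. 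It should read $vq_r(v^*v)q_sv^*=v(v^*v)q_rq_sv^*=0$ for $r\neq s$. Injectivity of $\sigma$ is not actually needed, since Step 2 treats each $vq_r$ separately.
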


In what follows, by a \emph{unital inclusion} we mean an inclusion $\sB \subseteq \sA$ such that $1_\sB = 1_\sA$.

\begin{prop}\label{prop:polar-normaliser}
Let $\mathcal{B}\subset \mathcal{A}$ be a unital inclusion of finite-dimensional $C^*$-algebras. If $x\in \nor{\sA}(\mathcal{B})$ has polar decomposition
\[
x=u|x|,
\]
then $u\in \nor{\sA}(\mathcal{B})$ and $|x|\in \mathcal{B}$.
\end{prop}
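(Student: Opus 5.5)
We need to show that if $x\in\nor{\sA}(\sB)$ has polar decomposition $x=u|x|$, then $|x|\in\sB$ and $u\in\nor{\sA}(\sB)$. The first claim is immediate from unitality; the second needs more care.

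\textbf{Step 1: $|x|\in\sB$.} Since the inclusion is unital, $1\in\sB$. The normaliser condition then gives $x^*x=x^*1x\in x^*\sB x\subseteq\sB$, and similarly $xx^*\in\sB$. Because $\sB$ is a $C^*$-algebra, it is closed under continuous functional calculus. Hence $|x|=(x^*x)^{1/2}\in\sB$ and $|x^*|=(xx^*)^{1/2}\in\sB$.

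\textbf{Step 2: express $u$ through $x$.} In finite dimensions the polar decomposition is canonical. The partial isometry $u$ has initial projection equal to the range projection of $|x|$, and final projection equal to that of $|x^*|$. Let $f(t)=t^{-1}$ for $t>0$ and $f(0)=0$. Then $f(|x|)=|x|^{+}$ is the Moore--Penrose inverse, and
\[
u=x|x|^{+}.
\]
Indeed $x|x|^{+}|x|=x\,s(|x|)=x$, where $s(|x|)$ is the support projection, and $x|x|^+$ vanishes on $\ker|x|=\ker x$. Since $\sigma(|x|)$ is finite, $f$ is continuous on it. So $|x|^{+}\in C^*(|x|)\subseteq\sB$ by Step 1.

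\textbf{Step 3: $u$ normalises $\sB$.} For $b\in\sB$ we have $|x|^{+}b|x|^{+}\in\sB$, so
\[
ubu^*=x\bigl(|x|^{+}b|x|^{+}\bigr)x^*\in x\sB x^*\subseteq\sB.
\]
For the other direction, I would avoid trying to write $u^*=|x|^{+}x^*$ and use the normaliser property for $x^*$ directly, since $x^*\in\nor{\sA}(\sB)$ as well. Instead write $u=|x^*|^{+}x$, which holds by the symmetric polar identity $x=|x^*|u$. Then
\[
u^*bu=x^*\bigl(|x^*|^{+}b|x^*|^{+}\bigr)x\in x^*\sB x\subseteq\sB,
\]
because $|x^*|^{+}\in\sB$ by Step 1. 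Hence $u\in\nor{\sA}(\sB)$.

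\textbf{Main obstacle.} The only delicate point is justifying the identities $u=x|x|^{+}=|x^*|^{+}x$ for the canonical polar decomposition, together with the membership of $|x|^{+}$ and $|x^*|^{+}$ in $\sB$. The identities come from the standard facts $x=|x^*|u$, $u^*u=s(|x|)$ and $uu^*=s(|x^*|)$. Membership comes from finite spectrum, which lets us use the polynomial functional calculus inside $\sB$.
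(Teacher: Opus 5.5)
Your argument is correct, but it takes a different route from the paper's. The paper splits into cases. For invertible $x$ it writes $u=x|x|^{-1}$. Otherwise it sets $u_n=x(\tfrac1n+|x|)^{-1}$, which is a normaliser as the product of $x$ with an element of $\mathcal{B}$. It then uses that $|x|(\tfrac1n+|x|)^{-1}\to s(|x|)$ in norm, by finite dimensionality, together with norm-closedness of $\mathcal{N}_{\mathcal{A}}(\mathcal{B})$, to conclude $u=\lim u_n\in\mathcal{N}_{\mathcal{A}}(\mathcal{B})$.

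You instead write $u$ exactly as $x|x|^{+}$, with the Moore--Penrose inverse $|x|^{+}=f(|x|)\in\mathcal{B}$ because $f$ is continuous on the finite set $\sigma(|x|)$. This removes both the case distinction and the limiting argument, and you never need closedness of the normaliser. Finite dimensionality enters at the same point in both proofs: $0$ is isolated in $\sigma(|x|)$. For you this makes $f$ continuous; for the paper it makes $t(\tfrac1n+t)^{-1}\to\chi_{(0,\infty)}(t)$ uniformly on $\sigma(|x|)$. In fact the paper's $u_n$ converge precisely to your $x|x|^{+}$.

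One simplification: the second half of your Step 3 is more elaborate than it needs to be. Since $\mathcal{N}_{\mathcal{A}}(\mathcal{B})$ is closed under multiplication and contains $\mathcal{B}$, the identity $u=x|x|^{+}$ already shows $u$ is a normaliser. Concretely, $u^*bu=|x|^{+}(x^*bx)|x|^{+}\in\mathcal{B}$, so the factorisation $u^*=|x|^{+}x^*$ you chose to avoid works perfectly well. Your detour through $u=|x^*|^{+}x$ is valid, since it follows from $x=|x^*|u$ and $uu^*=s(|x^*|)$; it is just unnecessary.
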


\begin{proof}
Since $\mathcal{B}$ is a unital $C^*$-subalgebra of $\mathcal{A}$ containing $1_{\mathcal{A}}$, it follows immediately that $|x|\in \mathcal{B}$ whenever $x\in \nor{\sA}(\mathcal{B})$.

Now let $x\in \nor{\sA}(\mathcal{B})$ and write its polar decomposition as $x=u|x|$.

\noindent\textbf{Case 1.} Suppose that $x$ is invertible. Then $|x|$ is invertible and
\[
u=x|x|^{-1}.
\]
Since $x\in \nor{\sA}(\mathcal{B})$ and $|x|^{-1}\in \mathcal{B}$, we conclude that $u\in \nor{\sA}(\mathcal{B}).$

\medskip

\noindent\textbf{Case 2.} Suppose that $x$ is not invertible. For each $n\in\mathbb{N}$, define
\[
u_n=x\left(\frac{1}{n}+|x|\right)^{-1}.
\]
Since $\left(\frac{1}{n}+|x|\right)^{-1}\in \mathcal{B}$, we have $u_n\in \nor{\sA}(\mathcal{B}).$ for every $n$.

As $\mathcal{A}$ is finite dimensional,
\[
|x|\left(\frac{1}{n}+|x|\right)^{-1}
\longrightarrow s(|x|)=u^*u
\]
in norm. Therefore,
\[
u_n
=x\left(\frac{1}{n}+|x|\right)^{-1}
=u|x|\left(\frac{1}{n}+|x|\right)^{-1}
\longrightarrow uu^*u=u
\]
in norm. Since $\nor{\sA}(\mathcal{B})$ is norm closed, it follows that $u\in \nor{\sA}(\mathcal{B}).$
This completes the proof.
\end{proof}

\begin{definition}\label{def:regularity}
Let $\mathcal{B}$ be a $C^*$-subalgebra of $\mathcal{A}$. We say that
$\mathcal{B}$ is \emph{regular} in $\mathcal{A}$ if the $C^*$-algebra
generated by $\nor{\mathcal{A}}(\mathcal{B})$ coincides with
$\mathcal{A}$; that is,
\[
C^*(\nor{\mathcal{A}}(\mathcal{B}))=\mathcal{A}.
\]

We say that $\mathcal{B}$ is \emph{unitary regular} if
\[
C^*(\unor{\mathcal{A}}(\mathcal{B}))=\mathcal{A},
\]
and \emph{groupoid regular} if
\[
C^*(\grnor{\mathcal{A}}(\mathcal{B}))=\mathcal{A}.
\]
\end{definition}

\begin{remark}
In a finite-dimensional setting, the $C^*$-algebra generated by a subset is simply the $C^*$-closure of the algebra generated by that subset. Because $\nor{\sA}(\mathcal{B})$ is closed under multiplication and involution, its linear span is already a $\ast$-algebra. In finite dimensions, this $\ast$-algebra is automatically closed.
\end{remark}

\begin{prop}\label{prop:autmorph_pres_nor}
Let $\sB \subseteq \sA$ be a unital inclusion of (not necessarily finite-dimensional) $C^*$-algebras, and let $\sigma \in \textrm{Aut}(\sA)$. Then
\[
\sigma\bigl(\unor{\sA}(\sB)\bigr)=\unor{\sA}(\sigma(\sB)), \qquad
\sigma\bigl(\grnor{\sA}(\sB)\bigr)=\grnor{\sA}(\sigma(\sB)), \qquad
\sigma\bigl(\nor{\sA}(\sB)\bigr)=\nor{\sA}(\sigma(\sB)).
\]
Consequently, $\sB$ is regular (respectively, unitary regular or groupoid regular) if and only if $\sigma(\sB)$ is regular (respectively, unitary regular or groupoid regular).
\end{prop}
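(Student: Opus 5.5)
The plan is to verify the three set equalities directly from the definitions, using only that $\sigma$ is a bijective, unital, $*$-preserving, multiplicative map with inverse $\sigma^{-1}\in\operatorname{Aut}(\sA)$. For the normaliser, I will take $n\in\nor{\sA}(\sB)$ and check that $\sigma(n)\in\nor{\sA}(\sigma(\sB))$. For any $\sigma(b)\in\sigma(\sB)$ we have
\[
\sigma(n)\sigma(b)\sigma(n)^*=\sigma(nbn^*)\in\sigma(\sB),
\]
and similarly $\sigma(n)^*\sigma(b)\sigma(n)=\sigma(n^*bn)\in\sigma(\sB)$. This gives $\sigma(\nor{\sA}(\sB))\subseteq\nor{\sA}(\sigma(\sB))$. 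Applying the same inclusion to $\sigma^{-1}$ and the subalgebra $\sigma(\sB)$ yields $\sigma^{-1}(\nor{\sA}(\sigma(\sB)))\subseteq\nor{\sA}(\sB)$, which is the reverse inclusion.

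The groupoid normaliser then follows by intersecting with partial isometries. Since $\sigma(v)\sigma(v)^*\sigma(v)=\sigma(vv^*v)$ and $\sigma$ is injective, $v$ is a partial isometry if and only if $\sigma(v)$ is. Likewise for the unitary normaliser: $\sigma$ is unital, so $\sigma(\sU(\sA))=\sU(\sA)$. Moreover $u\sB u^*=\sB$ holds if and only if $\sigma(u)\sigma(\sB)\sigma(u)^*=\sigma(\sB)$, because $\sigma$ is a bijection on subsets. In each case the symmetric argument with $\sigma^{-1}$ gives equality. The definition of $\grnor{\sA}$ was stated only for finite-dimensional $\sA$, but the argument is purely algebraic, so it applies verbatim in general.

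For the consequence, I use that a $*$-automorphism maps the $C^*$-algebra generated by a set $S$ onto the $C^*$-algebra generated by $\sigma(S)$. The image $\sigma(C^*(S))$ is a $C^*$-subalgebra containing $\sigma(S)$, and applying $\sigma^{-1}$ gives the reverse containment. Here I need $\sigma$ to be isometric, which holds automatically for injective $*$-homomorphisms between $C^*$-algebras, so closures are preserved. Hence
\[
C^*(\nor{\sA}(\sigma(\sB)))=\sigma\bigl(C^*(\nor{\sA}(\sB))\bigr),
\]
and this equals $\sA$ exactly when $C^*(\nor{\sA}(\sB))=\sA$, since $\sigma$ is surjective and injective. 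The same argument applies to the unitary and groupoid versions.

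No step is a real obstacle. The only point needing care is the reverse inclusions, which I handle uniformly by applying the forward inclusion to $\sigma^{-1}$.
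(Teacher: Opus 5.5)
Your proof is correct, and the paper gives no proof of this proposition at all (it treats it as routine), so your argument is exactly the verification it leaves to the reader. You push the defining conditions through the unital, multiplicative, $*$-preserving bijection $\sigma$, take the reverse inclusions from $\sigma^{-1}$, and use $\sigma(C^*(S))=C^*(\sigma(S))$ (closedness is preserved because $\sigma$ is a homeomorphism), which covers all three normalisers and all three notions of regularity.
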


\begin{theorem}\label{thm:reg-iff-gr-reg}
Let $\mathcal{B} \subseteq \mathcal{A}$ be a unital inclusion of finite-dimensional
$C^*$-algebras. Then the following are equivalent:
\begin{enumerate}
    \item[(i)] $\mathcal{B}$ is regular in $\mathcal{A}$.
    \item[(ii)] $\mathcal{A} = \spn\!\bigl(\grnor{\mathcal{A}}(\mathcal{B})\bigr)$.
\end{enumerate}
In particular, $\mathcal{B}$ is regular in $\mathcal{A}$ if and only if it is groupoid regular in $\mathcal{A}$.
\end{theorem}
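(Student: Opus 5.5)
The plan is to prove (ii) $\Rightarrow$ (i) directly and to get (i) $\Rightarrow$ (ii) from Proposition \ref{prop:polar-normaliser}. By the Remark following Definition \ref{def:regularity}, $C^*(\nor{\sA}(\sB)) = \spn(\nor{\sA}(\sB))$, because $\nor{\sA}(\sB)$ is a $*$-semigroup and $\sA$ is finite-dimensional. So regularity is the statement $\sA = \spn(\nor{\sA}(\sB))$.

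\emph{(ii) $\Rightarrow$ (i).} This direction is immediate. By definition $\grnor{\sA}(\sB) \subseteq \nor{\sA}(\sB)$, so
\[
\sA = \spn(\grnor{\sA}(\sB)) \subseteq \spn(\nor{\sA}(\sB)) = C^*(\nor{\sA}(\sB)) \subseteq \sA .
\]

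\emph{(i) $\Rightarrow$ (ii).} This is the substantive direction, and the plan is to show $\nor{\sA}(\sB) \subseteq \spn(\grnor{\sA}(\sB))$.

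1. Take $x \in \nor{\sA}(\sB)$ with polar decomposition $x = u|x|$. Proposition \ref{prop:polar-normaliser} gives $u \in \nor{\sA}(\sB)$ and $|x| \in \sB$. Since $u$ is a partial isometry, $u \in \grnor{\sA}(\sB)$.

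2. Since $|x| \in \sB$ is a positive element of a finite-dimensional $C^*$-algebra, its spectral decomposition is
\[
|x| = \sum_k \lambda_k q_k ,
\]
where the $q_k$ are projections lying in $\sB$.

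3. This gives
\[
x = \sum_k \lambda_k\, u q_k ,
\]
so it suffices to show each $uq_k \in \grnor{\sA}(\sB)$.

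4. First, a projection $q \in \sB$ normalizes $\sB$, since $qbq \in \sB$ for $b \in \sB$. The normalizer is a semigroup, so $uq \in \nor{\sA}(\sB)$.

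5. Next I show $uq$ is a partial isometry. The initial projection $u^*u = s(|x|)$ commutes with each spectral projection $q_k$ of $|x|$. Hence
\[
(uq)^*(uq) = q\,u^*u\,q = q\,s(|x|)\,q ,
\]
which is a projection because $q$ and $s(|x|)$ are commuting projections. An element whose square modulus is a projection is a partial isometry.

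6. Alternatively, one may simply discard the spectral projection for the eigenvalue $0$. Then every remaining $q_k \le s(|x|)$, so $q_k\,u^*u\,q_k = q_k$ directly.

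7. Combining steps 3--5, $x \in \spn(\grnor{\sA}(\sB))$. Therefore
\[
\sA = \spn(\nor{\sA}(\sB)) \subseteq \spn(\grnor{\sA}(\sB)) \subseteq \sA ,
\]
which is (ii).

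\emph{The ``in particular'' clause.} Condition (ii) implies $C^*(\grnor{\sA}(\sB)) = \sA$. Conversely, groupoid regularity implies regularity, because $C^*(\grnor{\sA}(\sB)) \subseteq C^*(\nor{\sA}(\sB))$. Together with the equivalence just proved, regularity and groupoid regularity coincide.

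The main obstacle is step 5: checking that $uq_k$ is again a partial isometry rather than merely a normalizer. This rests on the commutation of $u^*u$ with the spectral projections of $|x|$. That commutation is guaranteed because $u^*u$ is the support projection of $|x|$, so it is itself a function of $|x|$.
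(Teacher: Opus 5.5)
Your proof is correct and shares the paper's skeleton. You reduce regularity to $\sA=\spn(\nor{\sA}(\sB))$, invoke Proposition~\ref{prop:polar-normaliser} to get $u\in\grnor{\sA}(\sB)$ and $|x|\in\sB$, and then split $|x|$ into pieces that combine with $u$ into normalizing partial isometries. Where you differ is in how $|x|$ is split.

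The paper writes $|x|=\sum_j\lambda_j v_j$ with $v_j\in\sU(\sB)$, using that a unital $C^*$-algebra is spanned by its unitaries. Then $uv_j$ is a partial isometry for free, since $(uv_j)(uv_j)^*(uv_j)=uu^*uv_j=uv_j$ uses only $v_jv_j^*=1$. No relation between $u^*u$ and $v_j$ needs checking.

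You use the spectral projections $q_k\in\sB$ of $|x|$ instead. This costs the extra (correct) observation that $u^*u=s(|x|)$ commutes with each $q_k$, or dominates it once the kernel projection is discarded. In return you need only functional calculus in $C^*(|x|)$ rather than the unitary-span fact. You also get a more canonical decomposition $x=\sum_k\lambda_k\,uq_k$. Here the $\lambda_k$ are the distinct nonzero singular values of $x$, and the $uq_k$ are sub-partial-isometries of $u$ with mutually orthogonal initial projections in $\sB$. In effect this is a singular value decomposition carried out inside the normalizer.

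Your handling of the ``in particular'' clause also spells out a direction the paper leaves implicit: groupoid regular $\Rightarrow$ regular, via $C^*(\grnor{\sA}(\sB))\subseteq C^*(\nor{\sA}(\sB))$.
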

\begin{proof}
First, observe that $\nor{\sA}(\mathcal{B})$ is closed under scalar multiplication, involution and multiplication. Thus, $\spn(\nor{\sA}(\mathcal{B}))$ is a $*$-subalgebra of $\mathcal{A}$. Because $\mathcal{A}$ is finite-dimensional, every $*$-subalgebra is automatically norm-closed. Therefore, $C^*(\nor{\sA}(\mathcal{B})) = \spn(\nor{\sA}(\mathcal{B}))$.

$(\impliedby)$ Assume $\mathcal{A} = \spn(\grnor{\sA} (\sB))$. By definition, $\grnor{\sA} (\sB) \subseteq \nor{\sA}(\mathcal{B})$. Therefore, we have the inclusions:
\[
\mathcal{A} = \spn(\grnor{\sA} (\sB)) \subseteq \spn(\nor{\sA}(\mathcal{B})) = C^*(\nor{\sA}(\mathcal{B})) \subseteq \mathcal{A}.
\]
Thus, $C^*(\nor{\sA}(\mathcal{B})) = \mathcal{A}$, which means $\mathcal{B}$ is regular in $\mathcal{A}$.

$(\implies)$ Assume $\mathcal{B}$ is regular in $\mathcal{A}$, meaning $C^*(\nor{\sA}(\mathcal{B})) = \spn ( \nor{\sA}(\sB)) = \mathcal{A}$.

Let $x \in \nor{\sA}(\mathcal{B})$. By proposition \ref{prop:polar-normaliser}, its polar decomposition $x = u|x|$ satisfies $u \in \grnor{\sA} (\sB)$ and $|x| \in \mathcal{B}$.

Since $\mathcal{B}$ is spanned by its unitaries, there exist scalars $\lambda_j \in \mathbb{C}$ and unitaries $v_j \in \mathcal{U}(\mathcal{B})$ such that $|x| = \sum_{j=1}^k \lambda_j v_j$. Therefore we get
\[
x = u \left( \sum_{j=1}^k \lambda_j v_j \right) = \sum_{j=1}^k \lambda_j (uv_j).
\]
We now verify that each $uv_j \in \grnor{\sA} (\sB)$. Since $v_j \in \mathcal{U}(\mathcal{B}) \subset \mathcal{B}$, $v_j$ trivially normalises $\mathcal{B}$. Because $\nor{\sA}(\mathcal{B})$ is closed under multiplication, $u \in \nor{\sA}(\mathcal{B})$ and $v_j \in \nor{\sA}(\mathcal{B})$ implies $uv_j \in \nor{\sA}(\mathcal{B})$.

Furthermore, $uv_j$ is a partial isometry:
\[
(uv_j)(uv_j)^*(uv_j) = u v_j v_j^* u^* u v_j = u 1 u^* u v_j = (u u^* u) v_j = u v_j.
\]

Thus, $uv_j \in \grnor{\sA} (\sB)$, which shows that $x \in \spn(\grnor{\sA} (\sB))$. Since this holds for all $x \in \nor{\sA}(\mathcal{B})$, we conclude $\spn(\nor{\sA}(\mathcal{B})) \subseteq \spn(\grnor{\sA} (\sB))$. As $\grnor{\sA} (\sB) \subseteq \nor{\sA}(\mathcal{B})$ is given, equality holds, and therefore $\mathcal{A} = \spn(\grnor{\sA} (\sB))$.

The final assertion follows immediately from the first part together with the elementary observation that
$\mathcal{A}=\spn\!\bigl(\grnor{\mathcal{A}}(\mathcal{B})\bigr)$ implies $\mathcal{A}=C^*\!\bigl(\grnor{\mathcal{A}}(\mathcal{B})\bigr)$.
\end{proof}

Using Proposition \ref{prop:abel-equi-reg} and Theorem \ref{prop:polar-normaliser} we get:
\begin{cor}\label{abelian case}
Let $\mathcal{B} \subseteq \mathcal{A}$ be a unital inclusion of finite-dimensional
$C^*$-algebras, where $\mathcal{B}$ is abelian. Then the following statements are equivalent:
\begin{enumerate}
    \item[(i)] $\mathcal{B}$ is regular in $\mathcal{A}$.
    \item[(ii)] $\mathcal{B}$ is groupoid regular in $\mathcal{A}$.
    \item[(iii)] $\mathcal{B}$ is unitary regular in $\mathcal{A}$.
\end{enumerate}
\end{cor}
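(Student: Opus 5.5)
The plan is to chain the equivalences through the spans of the various normalisers. As in the proof of Theorem \ref{thm:reg-iff-gr-reg}, $\nor{\sA}(\sB)$ is a $*$-semigroup and $\sA$ is finite dimensional, so $C^*(\nor{\sA}(\sB))=\spn(\nor{\sA}(\sB))$. The same reasoning applies to the other two normalisers. $\unor{\sA}(\sB)$ is a group closed under adjoints, so its span is a $*$-algebra, and hence $C^*(\unor{\sA}(\sB))=\spn(\unor{\sA}(\sB))$. For the groupoid normaliser, I will invoke the Remark after its definition: because $\sB$ is abelian, $\grnor{\sA}(\sB)$ is an inverse semigroup closed under adjoints. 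Its span is therefore a $*$-algebra as well, and so $C^*(\grnor{\sA}(\sB))=\spn(\grnor{\sA}(\sB))$.

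The equivalence (i)$\iff$(ii) is already the final assertion of Theorem \ref{thm:reg-iff-gr-reg}, and it holds without any abelian hypothesis. Concretely, regularity means $\sA=\spn(\grnor{\sA}(\sB))$, which gives groupoid regularity. Conversely, groupoid regularity gives $\sA=C^*(\grnor{\sA}(\sB))\subseteq C^*(\nor{\sA}(\sB))$, which gives regularity.

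For (ii)$\iff$(iii), I use Proposition \ref{prop:abel-equi-reg}: since $\sB$ is abelian, $\spn(\unor{\sA}(\sB))=\spn(\grnor{\sA}(\sB))$. Combining this with the identification of generated $C^*$-algebras with spans from the first paragraph gives
\[
C^*(\unor{\sA}(\sB))=\spn(\unor{\sA}(\sB))=\spn(\grnor{\sA}(\sB))=C^*(\grnor{\sA}(\sB)).
\]
So one of these algebras equals $\sA$ exactly when the other does. If one prefers to avoid the inverse-semigroup remark, the inclusions $\unor{\sA}(\sB)\subseteq\grnor{\sA}(\sB)\subseteq\nor{\sA}(\sB)$ suffice. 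They give (iii)$\Rightarrow$(ii) immediately. For (i)$\Rightarrow$(iii), Theorem \ref{thm:reg-iff-gr-reg} gives $\sA=\spn(\grnor{\sA}(\sB))$, and then Proposition \ref{prop:abel-equi-reg} gives $\sA=\spn(\unor{\sA}(\sB))\subseteq C^*(\unor{\sA}(\sB))$.

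The only real content is Proposition \ref{prop:abel-equi-reg}, namely that every normalising partial isometry of an abelian subalgebra is a linear combination of normalising unitaries; we are allowed to assume it here. Everything else is bookkeeping with spans and generated algebras in finite dimensions. I expect no genuine obstacle beyond checking that each of the three normaliser sets has a span that is a $*$-algebra, or using the inclusion chain above to avoid needing this.
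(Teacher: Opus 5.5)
Your proposal is correct and follows the same route as the paper. The paper's proof is just the one-line combination of Proposition \ref{prop:abel-equi-reg} (equal spans of unitary and groupoid normalisers for abelian $\sB$) with the polar-decomposition result Proposition \ref{prop:polar-normaliser} underlying Theorem \ref{thm:reg-iff-gr-reg}. Your extra bookkeeping makes explicit what the paper leaves implicit: identifying generated $C^*$-algebras with spans, or alternatively closing the cycle of implications with the chain $\unor{\sA}(\sB)\subseteq\grnor{\sA}(\sB)\subseteq\nor{\sA}(\sB)$ without the inverse-semigroup remark.
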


We conclude this section with a few examples. The assertions in each case can be verified by direct computation.

\begin{example}\leavevmode\par\label{ex:grop-reg-counter}
\begin{enumerate}
    \item[(i)] The subalgebra $(\C \otimes \mathbb{I}_2) \oplus (\C \otimes \mathbb{I}_2)$ is unitary regular in $\mn{4}$.

    \item[(ii)] The subalgebra $\C \oplus \mn{2}$ is regular in $\mn{3}$, but it is not unitary regular. More generally, for every $n \geq 3$,
    \[
    \C \oplus \mn{n-1} \subseteq \mn{n}
    \]
    is regular but not unitary regular.

    \item[(iii)] The subalgebra $\C \oplus (\C \otimes \mathbb{I}_2)$ is not regular in $\mn{3}$.
\end{enumerate}
\end{example}

    \section{Characterization of Regular Subalgebras of $\mn{n}$}
    \label{sec:char_mn}
    The main purpose of this section is to characterize both regularity and unitary regularity for unital $C^*$-subalgebras of the matrix algebra $\mn{n}$ (see Theorems \ref{thm:regularity-char-mnc} and \ref{thm:uni-reg-char-mnc}, respectively). By Proposition \ref{concrete-inclusion}, every unital $C^*$-subalgebra of $\mn{n}$ is of the form
\[
\bigoplus_{j=1}^k \left( \mn{d_j} \otimes \mathbb{I}_{p_j} \right)
\]
for some vectors $\vec{d}=(d_1,\dots,d_k)$ and $\vec{p}=(p_1,\dots,p_k)$ satisfying $\vec{p}\cdot\vec{d}=n$. Therefore, it suffices to determine when such subalgebras are regular. For groupoid regularity, recall from Theorem \ref{thm:reg-iff-gr-reg} that it is equivalent to regularity in our context.

The preliminary propositions established in this section will play a key role in the proofs of Theorems \ref{thm:regularity-char-mnc} and \ref{thm:uni-reg-char-mnc}. They may also be of independent interest. While Proposition \ref{prop:equal-mult-dim-imply-uni-reg} and Proposition \ref{prop:reg-impy-same-dim} can be deduced from \cite{Bakshi2026}, we offer an independent proof here to maintain the flow of our exposition.

\begin{prop}\label{prop:equal-mult-dim-imply-uni-reg}
    Let $n\in \N$ be given and $d, p, k \in \N$ such that $dpk = n$. Then consider the following unital $C^*$-subalgebra of $\mn{n}$:
    \[
    \Delta_{d, p, k}:=\bigoplus_{1}^{k} \left( \mn{d} \otimes \mathbb{I}_p \right)
    \]
    Then $\Delta_{d, p, k}$ is a unitary regular subalgebra of $\mn{n}$.
\end{prop}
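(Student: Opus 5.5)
Identify $\mn{n}$ with $\mn{k}\otimes\mn{d}\otimes\mn{p}$, i.e. $\C^n=\C^k\otimes\C^d\otimes\C^p$, so that the $j$th summand of $\C^n$ is $e_j\otimes\C^d\otimes\C^p$. Up to this fixed identification (a unitary conjugation, harmless by Proposition \ref{prop:autmorph_pres_nor}), the subalgebra is
\[
\Delta_{d,p,k}=D_k\otimes\mn{d}\otimes\mathbb{I}_p ,
\]
where $D_k\subseteq\mn{k}$ is the diagonal algebra. We must show that the unitaries normalising this algebra span $\mn{k}\otimes\mn{d}\otimes\mn{p}$. Since the span of a set of unitaries closed under products and adjoints is a $*$-algebra, and that equals $C^*(\unor{\mn{n}}(\Delta_{d,p,k}))$ in finite dimensions, it suffices to exhibit enough normalising unitaries and check that their products span.

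We use three families. \emph{First}, the permutation unitaries $P_\sigma\otimes\mathbb{I}_d\otimes\mathbb{I}_p$ for $\sigma\in S_k$, together with the diagonal unitaries $D\otimes\mathbb{I}_d\otimes\mathbb{I}_p$ with $D\in\sU(D_k)$. Conjugation permutes the diagonal entries of $D_k$, and $\mn{d}\otimes\mathbb{I}_p$ is untouched, so these normalise the subalgebra. Products $D P_\sigma$ of diagonal and permutation unitaries span $\mn{k}$: for instance, a cyclic shift $S$ and diagonal phases $W=\operatorname{diag}(\omega^i)$ give the Weyl basis $W^aS^b$. Hence these unitaries span $\mn{k}\otimes\mathbb{I}_d\otimes\mathbb{I}_p$. \emph{Second}, the elements $\mathbb{I}_k\otimes u\otimes\mathbb{I}_p$ with $u\in\sU(\mn{d})$, which span $\mathbb{I}_k\otimes\mn{d}\otimes\mathbb{I}_p$. \emph{Third}, $\mathbb{I}_k\otimes\mathbb{I}_d\otimes v$ with $v\in\sU(\mn{p})$. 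These commute with $\Delta_{d,p,k}$, so they lie in the relative commutant and normalise trivially. They span $\mathbb{I}_k\otimes\mathbb{I}_d\otimes\mn{p}$.

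Products of these three families give all elementary tensors $a\otimes b\otimes c$. Indeed, $(a\otimes 1\otimes1)(1\otimes b\otimes 1)(1\otimes1\otimes c)=a\otimes b\otimes c$, and each factor is a linear combination of normalising unitaries. Since products of unitary normalisers are unitary normalisers, every elementary tensor lies in $\spn(\unor{\mn{n}}(\Delta_{d,p,k}))$. Elementary tensors span $\mn{n}$, which proves unitary regularity.

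The only step needing care is the bookkeeping of the identification. We must confirm that under $\C^n\cong\C^k\otimes\C^d\otimes\C^p$, ordered to match the block form $A\otimes\mathbb{I}_p$ of the paper's notation, the algebra is exactly $D_k\otimes\mn{d}\otimes\mathbb{I}_p$. We must also confirm that the permutation unitaries genuinely map the subalgebra onto itself. This is where the hypothesis that all blocks share the same $d$ and the same $p$ is used: permuting blocks of unequal sizes would not preserve the algebra. Everything else is routine.
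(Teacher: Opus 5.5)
Your proof is correct and uses essentially the same ingredients as the paper's proof: block permutations (the paper uses the swaps $S_{rs}$), local unitaries acting on the $\mn{d}$ and $\mn{p}$ factors, and the fact that the span of the unitary normaliser is closed under products. The only real difference is bookkeeping. You organise the argument along the factorisation $\mn{k}\otimes\mn{d}\otimes\mn{p}$ rather than block by block, which makes the final spanning step cleaner; it is essentially the $l=1$ case of the paper's later proof of Proposition~\ref{prop:unitary_reg_identical_column}.
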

\begin{proof}
    Let $\sA = \mn{n}$, where $n= kdp$. We view $\mn{n}$ as a $k \times k$ block matrices where each block is an element of $\mn{pd}$.

    Under this decompostion, the subalgebra $\sB$ consists of block-diagonal matrices of the form:
    \[
    \sB= \{ \text{diag} (X_1 \otimes \mathbb{I}_p \oplus \dots \oplus X_k \otimes \mathbb{I}_p : X_j \in \mn{d} \}.
    \]
    We will show that the linear span of the unitary normalizer $\unor{\sA} (\sB)$ generates all of $\mn{n}$.

    \noindent{\bf Step 1:} Spanning the Block-Diagonal Matrices. 
    
    First let us find unitary normalizers that are entirely block-diagonal. Consider a unitary matrix $u \in \sA$ of the form:
    $$u = \operatorname{diag}(u_1, u_2, \dots, u_k)$$
    where each $u_j \in \mn{dp}$ is unitary.
    For $u$ to normalize $\mathcal{B}$, we need $u_j (X \otimes I_p) u_j^* \in \mn{d} \otimes I_p$ for every $X \in \mn{d}$.

    To achieve this, we construct $u_j$ as a tensor product of unitaries. Let $V_j \in \sU (\mn{d})$ and $W_j \in \sU(\mn{p})$ be arbitrary unitary matrices. Define:$$u_j = V_j \otimes W_j.$$
    Let us verify this normalizes the $j$-th block:
    $$(V_j \otimes W_j) (X \otimes I_p) (V_j^* \otimes W_j^*) = (V_j X V_j^*) \otimes (W_j I_p W_j^*) = (V_j X V_j^*) \otimes I_p.$$
    Since $V_j X V_j^* \in \mn{d}$, this element remains in $\mn{d} \otimes I_p$. Thus, any block-diagonal unitary $u$ whose blocks are of the form $V_j \otimes W_j$ belongs to $\unor{\sA}(\sB)$.
    Now we take the linear span:
    $$\operatorname{span} \{ V_j \otimes W_j \mid V_j \in \sU(\mn{d}), W_j \in \sU(\mn{p}) \} = \mn{d} \otimes \mn{p} = \mn{dp}.$$
    By taking linear combinations of these block-diagonal normalizers, we can independently generate any arbitrary matrix in each of the $k$ diagonal blocks. Therefore:
    $$\bigoplus_{j=1}^k \mn{dp} \subseteq \operatorname{span}\left(\unor{\sA}(\sB)\right).$$

   \noindent{\bf Step 2: }Spanning the Off-Diagonal Blocks.
    Next, we need to span the cross-terms between different blocks. For any pair of distinct indices $1 \le r < s \le k$, let $S_{rs}$ be the block-swap unitary.
    
    $S_{rs}$ is the matrix that acts as the identity on all blocks except $r$ and $s$, and acts as the standard swap operator $\begin{pmatrix} 0 & I_{dp} \\ I_{dp} & 0 \end{pmatrix}$ on the $r$-th and $s$-th blocks. Let us verify $S_{rs} \in \unor{\sA}(\sB).$ For any $b = \operatorname{diag}(X_1 \otimes I_p, \dots, X_k \otimes I_p) \in \mathcal{B}$:
    $$S_{rs} b S_{rs}^* = \operatorname{diag}(X_1 \otimes I_p, \dots, X_s \otimes I_p, \dots, X_r \otimes I_p, \dots, X_k \otimes I_p).$$
    This simply permutes the $r$-th and $s$-th entries. Because every block of $\mathcal{B}$ has the exact same structure ($\mn{d} \otimes \mathbb{I}_p$), the resulting matrix is still in $\mathcal{B}$. Thus, $S_{rs} \in \unor{\sA}(\sB)$.

    Because the normalizer is closed under multiplication we can multiply the block-diagonal unitaries from earlier by the swap unitaries $S_{rs}$.

    Consider a block-diagonal normalizer $u$ which is $V \otimes W$ in the $r$-th block, and the identity $I_{dp}$ everywhere else. The product $u S_{rs}$ shifts the $V \otimes W$ block into the $(r, s)$ off-diagonal position (and places an identity in the $(s, r)$ position).

    Taking the linear span of these shifted elements over all choices of $V$ and $W$, the $\mn{d} \otimes \mn{p} = \mn{dp}$ span from earlier step is transported perfectly into the $(r, s)$ off-diagonal block.

    Therefore by Step 1, $\operatorname{span}\left(\unor{\sA}(\sB)\right)$ contains all block-diagonal matrices.
    and by Step 2, $\operatorname{span}\left(\unor{\sA}(\sB)\right)$ contains all purely off-diagonal blocks. Since any matrix $M \in \mn{n}$ can be uniquely written as the sum of its block-diagonal part and its block-off-diagonal parts, we conclude that:
    $$\operatorname{span}\left(\unor{\sA}(\sB)\right) = \mn{n}.$$
    Therefore, $\sB$ is a unitarily regular subalgebra of $\mn{n}$. 
\end{proof}

\begin{prop}\label{prop:reg-impy-same-dim}
Let $n \in \N$ and let $d_1, \dots, d_k, p \in \N$ satisfy
\[
n = (d_1 + \cdots + d_k) p,
\]
where $d_i \neq d_j$ for some $i \neq j$. Then 
\[
\bigoplus_{j = 1}^k \left( \mn{d_j} \otimes \mathbb{I}_p \right)
\] 
is not a unitary regular subalgebra (but regular; see Proposition \ref{prop:equal-multi-imply-reg}) of $\mn{n}$.
\end{prop}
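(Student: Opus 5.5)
The plan is to show that every unitary normaliser of $\sB:=\bigoplus_{j=1}^k(\mn{d_j}\otimes\mathbb{I}_p)$ is block-diagonal with respect to a coarser decomposition that separates blocks of different dimension. Then $\spn(\unor{\mn{n}}(\sB))$ will be a proper subalgebra of $\mn{n}$.

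Let $q_j\in\sB$ denote the central projection onto the $j$-th summand, of rank $d_jp$. The centre $\sZ(\sB)$ is spanned by $q_1,\dots,q_k$, and its minimal projections are exactly these $q_j$.

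\textbf{Step 1.} Take $u\in\unor{\mn{n}}(\sB)$. Then $\operatorname{Ad}u$ restricts to a $*$-automorphism of $\sB$. Any $*$-automorphism maps the centre onto itself and permutes its minimal projections, so $uq_ju^*=q_{\sigma(j)}$ for some permutation $\sigma$. The map $\operatorname{Ad}u$ also restricts to an isomorphism $q_j\sB\to q_{\sigma(j)}\sB$, that is, $\mn{d_j}\cong\mn{d_{\sigma(j)}}$. Hence $d_{\sigma(j)}=d_j$. (The rank equality $d_jp=d_{\sigma(j)}p$ gives the same conclusion, since all multiplicities equal $p$.)

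\textbf{Step 2.} Group the indices by dimension. For each value $\delta$, set $E_\delta:=\sum_{j:\,d_j=\delta}q_j$. By Step 1, $uE_\delta u^*=E_\delta$ for every $u\in\unor{\mn{n}}(\sB)$. So every unitary normaliser commutes with each $E_\delta$. Consequently
\[
\spn(\unor{\mn{n}}(\sB))\subseteq\{E_\delta\}'=\bigoplus_\delta E_\delta\,\mn{n}\,E_\delta .
\]

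\textbf{Step 3.} Since $d_i\neq d_j$ for some $i\neq j$, there are at least two nonzero projections $E_\delta$. The commutant $\{E_\delta\}'$ is therefore a proper subalgebra of $\mn{n}$: for instance, it contains no matrix unit joining block $i$ to block $j$. So $C^*(\unor{\mn{n}}(\sB))=\spn(\unor{\mn{n}}(\sB))\neq\mn{n}$, and $\sB$ is not unitarily regular.

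The parenthetical regularity claim is deferred to Proposition \ref{prop:equal-multi-imply-reg}, as the statement indicates.

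The only real obstacle is justifying in Step 1 that an automorphism of $\sB$ permutes the minimal central projections and preserves the sizes of the matrix blocks. This is standard, via Proposition \ref{prop:rep_of_fin_alg} or directly: the minimal central projections are intrinsic to $\sB$, and the cut-down $q_j\sB\cong\mn{d_j}$ has dimension $d_j^2$.
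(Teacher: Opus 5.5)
Your proposal is correct and follows essentially the same route as the paper. Both arguments show that $\operatorname{Ad}u$ permutes the minimal central projections while preserving the block dimension $d_j$, so every unitary normaliser annihilates the corner joining two blocks of different dimension. Your grouping into the projections $E_\delta$ and passage to their commutant is just a slightly more symmetric packaging of the paper's direct observation that $z_a u z_b = 0$ whenever $d_a \neq d_b$.
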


\begin{proof}
    First see that the center of $\sB$, denoted $\sZ(\sB)$, consists of the scalar multiples of the identity within each block. It is spanned by the minimal central projections $z_1, z_2, \dots, z_k$, where $z_j$ is the identity matrix of the $j$-th block padded with zeros elsewhere.

    When viewed as matrices in $\sA = \mn{n}$, the rank of the $j$-th central projection is the size of its corresponding block:
    \[
    \operatorname{rank} (z_j) = d_j p, \;( 1 \le j \le k).
    \]
    Let $u \in \unor{\sA}(\sB)$. Then $u\sB u^* = \sB$. The map $\operatorname{Ad}_u : x \mapsto u x u^*$ is a $*$-automorphism of $\sB$, and therefore it must map the center $\sZ(\mathcal{B})$ onto itself. Specifically, $\operatorname{Ad}_u$ must permute the minimal central projections.

    Thus, there exists a permutation $\sigma$ of the indices $\{1, \dots, k\}$ such that:$$u z_j u^* = z_{\sigma(j)} \quad \text{for all } j \in \{1, \dots, k\}$$
    Because $u$ is a unitary matrix, conjugation by $u$ preserves the rank of any matrix. Therefore:$$
    d_jp = \operatorname{rank}(z_j) = \operatorname{rank}(u z_j u^*) = \operatorname{rank}(z_{\sigma(j)}) = d_{\sigma(j)}p,\; (1\le j \le k).
    $$
    Therefore we get:
    \begin{equation}\label{eqn:dj-equal}
        d_j = d_{\sigma(j)},\; (1\le j \le k)
    \end{equation}
    This means a unitary normalizer can only map the $j$-th central projection to the $i$-th central projection if their underlying blocks have the exact same internal dimension ($d_j = d_i$).

    Now by hypothesis, there exist some indices $a \neq b$ such that $d_a \neq d_b$.

    For our unitary normalizer $u$, we know $u z_b = z_{\sigma(b)} u$. Multiplying on the left by $z_a$, we get:$$z_a u z_b = z_a z_{\sigma(b)} u$$
    We also know from (\ref{eqn:dj-equal}), $d_{\sigma(b)} = d_b$. 

    Since $d_a \neq d_b$, it is impossible for $\sigma(b)$ to equal $a$. Because distinct minimal central projections are orthogonal, $z_a z_{\sigma(b)} = 0$. Consequently:$$z_a u z_b = 0.$$
    Because this holds for every unitary $u \in \unor{\sA}(\sB)$, it also holds for any linear combination of such unitaries. Thus, for any $x \in \spn \left( \unor{\sA}(\sB) \right)$, we must have:$$z_a x z_b = 0$$

    The set of all matrices in $\mn{n}$ where $z_a x z_b = 0$ is a strictly smaller subset of $\mn{n}$. Specifically, it misses the entire $(a, b)$-th off-diagonal block, which is isomorphic to the non-zero space $\mn{d_a p \times d_b p}$. Therefore:
    \[
    \spn \left( \unor{\sA}(\sB) \right)\subsetneq \mn{n} = \sA.
    \]
    Thus $\sB$ is not unitary regular in $\sA$.
\end{proof}

\begin{prop}\label{prop:reg-imply-equal-mult}
Let $n \in \N$ and let $d, p_1, \dots ,p_k \in \N$ satisfy
\[
n = d(p_1 + \cdots + p_k),
\]
where $p_i \neq p_j$ for some $i \neq j$. Then 
\[
\bigoplus_{j = 1}^k \left( \mn{d} \otimes \mathbb{I}_{p_j} \right)
\] 
is not a (unitary) regular subalgebra of $\mn{n}$.
\end{prop}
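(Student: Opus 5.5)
By Theorem \ref{thm:reg-iff-gr-reg}, regularity is equivalent to $\sA=\spn(\grnor{\sA}(\sB))$, and unitary regularity implies regularity. So it suffices to exhibit a nonzero ``corner'' of $\mn{n}$ that every partial isometry normaliser annihilates. Write $z_1,\dots,z_k$ for the minimal central projections of $\sB$, with $\operatorname{rank}(z_j)=dp_j$. Fix indices $a\neq b$ with $p_a\neq p_b$. I will aim to show that
\[
z_a v z_b=0\qquad\text{for every } v\in\grnor{\sA}(\sB).
\]
This shows that $\spn(\grnor{\sA}(\sB))$ misses the nonzero $(a,b)$ off-diagonal block of size $dp_a\times dp_b$, exactly as at the end of the proof of Proposition \ref{prop:reg-impy-same-dim}.

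First I analyse a single $v\in\grnor{\sA}(\sB)$. Since $v^*v$ and $vv^*$ lie in $v^*\sB v\subseteq\sB$ and $v\sB v^*\subseteq \sB$, both are projections in $\sB$. The map $x\mapsto vxv^*$ restricted to the corner $v^*v\sB v^*v$ is a $*$-homomorphism into $vv^*\sB vv^*$. It is multiplicative because $v^*v$ acts as the identity there, and it is injective with inverse $y\mapsto v^*yv$. Hence it is a $*$-isomorphism of corners of $\sB$, and so it maps central projections of the corner to central projections.

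Write $e=v^*v=\sum_j e_j$ with $e_j=ez_j\in \mn{d}\otimes\mathbb{I}_{p_j}$, that is, $e_j=E_j\otimes\mathbb{I}_{p_j}$ with $E_j$ a projection in $\mn{d}$. The corner $e\sB e$ is $\bigoplus_{j:E_j\ne0} E_j\mn{d}E_j\otimes \mathbb{I}_{p_j}$, with minimal central projections $e_j$. Similarly $f=vv^*=\sum_i F_i\otimes \mathbb{I}_{p_i}$. The isomorphism therefore sends each $e_j$ to some $f_i$. Comparing the dimensions of the simple summands gives $\operatorname{rank}E_j=\operatorname{rank}F_i$. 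Comparing ranks in $\mn{n}$, which are preserved by the partial isometry $v$ on $e$, gives $p_j\operatorname{rank}E_j=p_i\operatorname{rank}F_i$. Since these ranks are nonzero, $p_i=p_j$.

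Next I deduce $z_a v z_b=0$. We have $v z_b=v e z_b=v e_b$. By the correspondence above, $v e_b=f_{\sigma(b)}v$ for the block $f_{\sigma(b)}=vev_b^{\,}$-image, so $v e_b = f_i v$ with $p_i=p_b$, hence $i\neq a$. Thus $z_a v z_b=z_a f_i v=0$, because $f_i\le z_i$ and $z_az_i=0$. If $e_b=0$, this is immediate. I expect the main care to be needed in justifying the identity $v e_b v^*=f_i$ together with $v e_b=f_iv$. This follows from $ve_b=ve_bv^*v=f_iv$, using $v^*v=e$, which commutes with $e_b$.

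Finally, the unitary statement is contained in this one, since $\unor{\sA}(\sB)\subseteq\grnor{\sA}(\sB)$. It could also be seen directly by the rank argument of Proposition \ref{prop:reg-impy-same-dim}, which gives $dp_j=dp_{\sigma(j)}$.
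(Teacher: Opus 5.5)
Your proof is correct. It uses the same invariant as the paper: the abstract size of a matrix block compared with its spatial rank, which gives $q\,p_a=q\,p_b$. The difference is in how the argument is organized.

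The paper compresses first. For $v\in\grnor{\sA}(\sB)$ it sets $w=z_bvz_a$ and checks directly that $w\in\grnor{\sA}(\sB)$, using that $z_a,z_b$ are central in $\sB$. Then $w^*w\le z_a$ and $ww^*\le z_b$, so the corners $w^*w\,\sB\,w^*w$ and $ww^*\,\sB\,ww^*$ are single full matrix algebras $\mn{q}$ and $\mn{r}$. This way the paper never has to describe the centre of a corner.

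You keep $v$ whole. You show that $\operatorname{Ad}_v$ is a $*$-isomorphism $e\sB e\to f\sB f$, with surjectivity supplied by $y\mapsto v^*yv$. You then use that it matches the nonzero pieces $e_j=E_j\otimes\mathbb{I}_{p_j}$ bijectively with the nonzero $f_i$, and only when $p_i=p_j$. The vanishing of $z_avz_b$ then follows from $vz_b=ve_b=f_iv$. This is your version of the paper's $z_bvz_a=0$, with the roles of $a$ and $b$ swapped.

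Your route costs a little more: you must identify the minimal central projections of the corners. In exchange, you do not need the separate check that compressions by central projections of $\sB$ stay in the normaliser. You also get more information: every groupoid normaliser induces a multiplicity-preserving partial permutation of the blocks. This is the natural partial-isometry analogue of the permutation $\sigma$ in Proposition \ref{prop:reg-impy-same-dim}.

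One cosmetic point: the garbled phrase defining $f_{\sigma(b)}$ should simply read $f_{\sigma(b)}=ve_bv^*$. You justify exactly this identity correctly in the following sentence.
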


\begin{proof}
    We will show that the $\spn \left(\grnor{\sA}(\sB)\right) \subsetneq \sA$.

    The center of $\mathcal{B}$, denoted $\sZ(\mathcal{B})$, is spanned by the minimal central projections $z_1, \dots, z_k$, where $z_j$ is the identity element of the $j$-th block.

    For any projection $P \in \mathcal{B}$, we can decompose it as $P = \sum_{j=1}^k P z_j$. Because $P z_j$ belongs to $\mn{d} \otimes I_{p_j}$, it must be of the form:$$P z_j = Q_j \otimes I_{p_j}$$for some projection $Q_j \in \mn{d}$.

    If we let $q_j = \operatorname{rank}(Q_j)$, then $Pz_j$ has two important properties:
    \begin{itemize}
        \item[(i)] The corner algebra $(P z_j) \mathcal{B} (P z_j)$ is spatially isomorphic to $\mn{q_j} \otimes I_{p_j}$, which as an abstract $C^*$-algebra is isomorphic to $\mn{q_j}$ (see Proposition \ref{prop:abs-iso-concrete}).
        \item[(ii)] When viewed as a matrix in $\mn{n}$, its rank is $\operatorname{rank}(P z_j) = q_j p_j$
    \end{itemize}

    Now by hypothesis, there exist indices $a \neq b$ such that $p_a \neq p_b$.

    Let $v \in \grnor{\sA} (\sB)$ be an arbitrary partial isometry in the groupoid normalizer. We want to evaluate the component of $v$ that maps the $a$-th block to the $b$-th block, which is given by $w = z_b v z_a$.

    We claim that $w \in \grnor{\sA} (\sB)$. For any $x \in \mathcal{B}$:$$w x w^* = (z_b v z_a) x (z_a v^* z_b) = z_b \left( v (z_a x z_a) v^* \right) z_b$$

    Because $z_a \in \sZ(\mathcal{B})$, we have $z_a x z_a \in \mathcal{B}$. Because $v \in \grnor{\sA} (\sB)$, $v(z_a x z_a)v^* \in \mathcal{B}$. Finally, multiplying by $z_b \in \sZ(\mathcal{B})$ keeps the element in $\mathcal{B}$. Thus, $w \mathcal{B} w^* \subseteq \mathcal{B}$. By a symmetric argument, $w^* \mathcal{B} w \subseteq \mathcal{B}$.

    Therefore $w \in \grnor{\sA} (\sB)$. Consequently, the initial and final projections of $w$ must belong to $\mathcal{B}$.

    Its initial projection satisfies $w^*w = z_a v^* z_b v z_a \le z_a$. Thus, $w^*w = Q_a \otimes I_{p_a}$ for some projection $Q_a \in \mn{d}$. Let $q = \operatorname{rank}(Q_a)$.

    Its final projection satisfies $ww^* = z_b v z_a v^* z_b \le z_b$. Thus, $ww^* = R_b \otimes I_{p_b}$ for some projection $R_b \in \mn{d}$. Let $r = \operatorname{rank}(R_b)$.

    Now the map $\alpha(x) = w x w^*$ is a $C^*$-algebra isomorphism from the initial corner $(w^*w)\mathcal{B}(w^*w)$ onto the final corner $(ww^*)\mathcal{B}(ww^*)$. The initial corner is isomorphic to $\mn{q}$ and the final corner is isomorphic to $\mn{r}$.
    
    Because these two matrix algebras are isomorphic, they must have the same internal dimension:$$q = r.$$
    At the same time, because $w$ is a partial isometry in $\mn{n}$, its initial and final projections must have the exact same rank in $\mn{n}$:$$\operatorname{rank}(w^*w) = \operatorname{rank}(ww^*) \implies q p_a = r p_b$$
    substituting $q = r$ into this rank equation yields:
    $$ q(p_a - p_b) = 0.$$

    Because $p_a \neq p_b$, we have $p_a - p_b \neq 0$. This forces $q = 0$.
    If $q = 0$, then the initial projection $w^*w = 0$, which implies $w = 0$.

    Recall that $w = z_b v z_a$. We have shown that for any $v \in \grnor{\sA}(\sB)$, the off-diagonal block $z_b v z_a$ is strictly zero.

    By linearity, for any element $X \in \operatorname{span}\left(\grnor{\sA}(\sB)\right)$, we must have:$$z_b X z_a = 0$$
    However, in $\mathcal{A} = \mn{n}$, the subspace $z_b \mathcal{A} z_a$ is isomorphic to $\mn{d p_b \times d p_a}$ and is non-zero.
    
    Therefore:
    \[
    \spn \left(\grnor{\sA}(\sB)\right) \subsetneq \sA.
    \]
\end{proof}

In a similar argument as given in Proposition \ref{prop:reg-imply-equal-mult} we can have the following generalization:

\begin{prop}\label{prop:reg-imply-equal-mult-gen}
Let $n \in \N$ and let $d_1,\dots,d_k, p_1, \dots ,p_k \in \N$ satisfy
\[
n = \sum_{j=1}^k d_j p_j,
\]
where $p_i \neq p_j$ for some $i \neq j$. Then 
\[
\bigoplus_{j = 1}^k \left( \mn{d_j} \otimes \mathbb{I}_{p_j} \right)
\] 
is not a regular subalgebra of $\mn{n}$.
\end{prop}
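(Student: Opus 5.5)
The plan is to run the argument of Proposition \ref{prop:reg-imply-equal-mult} essentially verbatim, checking that the common dimension $d$ was never used there. By Theorem \ref{thm:reg-iff-gr-reg}, it suffices to show that $\spn(\grnor{\sA}(\sB)) \subsetneq \sA$, where $\sA=\mn{n}$ and $\sB=\bigoplus_j(\mn{d_j}\otimes\mathbb{I}_{p_j})$. Let $z_1,\dots,z_k$ be the minimal central projections of $\sB$, with $z_j$ the unit of the $j$-th block. Then $\operatorname{rank}(z_j)=d_jp_j$. Fix indices $a\neq b$ with $p_a\neq p_b$. The goal is to show that $z_b v z_a = 0$ for every $v\in\grnor{\sA}(\sB)$. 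This suffices: by linearity, every $X\in\spn(\grnor{\sA}(\sB))$ then satisfies $z_bXz_a=0$, while $z_b\sA z_a\cong \mn{d_bp_b\times d_ap_a}$ is nonzero.

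\textbf{Step 1: the cut-down lies in the groupoid normaliser.} Fix $v\in\grnor{\sA}(\sB)$ and set $w=z_bvz_a$. Since $z_a,z_b$ are central in $\sB$, the computation in Proposition \ref{prop:reg-imply-equal-mult} gives $w\sB w^*\subseteq\sB$ and $w^*\sB w\subseteq\sB$, so $w\in\nor{\sA}(\sB)$. It remains to see that $w$ is a partial isometry. I expect this to be the one point needing care. One option is to argue that $w^*w=z_a(v^*z_bv)z_a$ is a projection. Here $v^*z_bv\in\sB$ is a projection, being $v^*$ applied to a projection in $\sB$ via the normalising partial isometry $v$, and it commutes with $z_a$ because $z_a$ is central. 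Alternatively, one can replace $w$ by the partial isometry in its polar decomposition, using Proposition \ref{prop:polar-normaliser}; since $w=0$ if and only if that partial isometry is $0$, either route works. In either case the initial and final projections lie in $\sB$, and they are dominated by $z_a$ and $z_b$ respectively.

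\textbf{Step 2: comparing ranks.} We may write $w^*w=Q\otimes\mathbb{I}_{p_a}$ with $Q\in\mn{d_a}$ a projection of rank $q$, and $ww^*=R\otimes\mathbb{I}_{p_b}$ with $R\in\mn{d_b}$ a projection of rank $r$. The corner $(w^*w)\sB(w^*w)$ equals $QM_{d_a}Q\otimes\mathbb{I}_{p_a}\cong\mn{q}$. Similarly $(ww^*)\sB(ww^*)\cong\mn{r}$. The map $x\mapsto wxw^*$ sends the first corner into the second, by Step 1. It is multiplicative, since $w^*w$ acts as the unit on the first corner, and it has inverse $y\mapsto w^*yw$. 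Hence $\mn{q}\cong\mn{r}$, so $q=r$. Since $w$ is a partial isometry, $\operatorname{rank}(w^*w)=\operatorname{rank}(ww^*)$, which reads $qp_a=rp_b$. Together these give $q(p_a-p_b)=0$, so $q=0$ and therefore $w=0$.

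Note that the dimensions $d_a$ and $d_b$ enter only through the ambient sizes of $Q$ and $R$, and never through the final equations. So the argument of Proposition \ref{prop:reg-imply-equal-mult} transfers verbatim. Combined with the reduction at the start, this shows $\sB$ is not regular in $\mn{n}$.
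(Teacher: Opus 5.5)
Your proposal is correct and follows essentially the same route as the paper. Both cut a groupoid normaliser down to $w=z_bvz_a$, use the corner isomorphism $x\mapsto wxw^*$ to get $q=r$, compare ranks to get $qp_a=qp_b$, conclude $w=0$, and finish with $z_b\,\spn(\grnor{\sA}(\sB))\,z_a=0\neq z_b\sA z_a$. Your explicit check that $w^*w=z_a(v^*z_bv)$ is a projection, so that $w$ really is a partial isometry, fills in a step the paper leaves implicit.
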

\begin{proof}
    We will argue in the similar line as that of Proposition \ref{prop:reg-imply-equal-mult}.

    By hypothesis, there exist indices $a \neq b$ such that $p_a \neq p_b$. We want to show that:
    \[
    \spn \left(\grnor{\sA}(\sB)\right) \subsetneq \sA.
    \]

    The center of $\mathcal{B}$ is spanned by the minimal central projections $z_1, \dots, z_k$, where $z_j = I_{d_j} \otimes I_{p_j}$ (padded with zeros outside the $j$-th block).

    Let $v \in \grnor{\sA}(\sB)$. 

    We isolate the component of $v$ that intertwines the $a$-th block and the $b$-th block by defining $w = z_b v z_a$.
    By the exact same algebraic verification as in Proposition \ref{prop:reg-imply-equal-mult} (since $z_a, z_b \in \sZ(\mathcal{B})$), we have $w \in \grnor{\sA} (\sB)$.

    Because $w$ normalizes $\mathcal{B}$, its initial projection $w^*w$ and final projection $ww^*$ must belong to $\mathcal{B}$ and we have:
    
    $w^*w = z_a v^* z_b v z_a \le z_a$. Thus, it is contained entirely within the $a$-th block, meaning $w^*w = Q_a \otimes I_{p_a}$ for some projection $Q_a \in \mn{d_a}$. Let $q = \operatorname{rank}(Q_a)$. (Note that $q \le d_a$).

    $ww^* = z_b v z_a v^* z_b \le z_b$. Thus, it is contained entirely within the $b$-th block, meaning $ww^* = R_b \otimes I_{p_b}$ for some projection $R_b \in \mn{d_b}$. Let $r = \operatorname{rank}(R_b)$. (Note that $r \le d_b$).

    The map $\alpha(x) = w x w^*$ is a $C^*$-algebra isomorphism from the initial corner to the final corner:$$(Q_a \otimes I_{p_a}) \mathcal{B} (Q_a \otimes I_{p_a}) \cong (R_b \otimes I_{p_b}) \mathcal{B} (R_b \otimes I_{p_b}).$$ 
    As abstract $C^*$-algebras, these corners are isomorphic to $Q_a \mn{d_a} Q_a$ and $R_b \mn{d_b} R_b$, which in turn are isomorphic to $\mn{q}$ and $\mn{r}$, respectively. For $\mn{q}$ to be abstractly isomorphic to $\mn{r}$, we must have:$$q = r.$$

    On the other hand because $w$ is a partial isometry in $\mn{n}$, its initial and final projections must have the same rank as matrices in $\mn{n}$:$$\operatorname{rank}(w^*w) = \operatorname{rank}(ww^*) \implies q p_a = r p_b.$$

    Now substituting $q = r$ into this last equation we get:
    $$q p_a = q p_b \implies q (p_a - p_b) = 0.$$
    By hypothesis, $p_a \neq p_b$, so $p_a - p_b \neq 0$. This immediately forces $q = 0$. If $q = 0$, then $w^*w = 0$, which means $w = 0$.

    Now we can conclude by the same logic as in the end of the proof of Proposition \ref{prop:reg-imply-equal-mult}.
\end{proof}

\begin{prop}\label{prop:equal-multi-imply-reg}
Let $n \in \N$ and let $d_1, \dots, d_k, p \in \N$ satisfy $n = (d_1 + \cdots + d_k) p$. Then 
\[
\bigoplus_{j = 1}^k \left( \mn{d_j} \otimes \mathbb{I}_p \right)
\] 
is a regular subalgebra of $\mn{n}$.
\end{prop}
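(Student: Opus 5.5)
We need to show that $\spn(\nor{\sA}(\sB)) = \mn{n}$ for $\sB=\bigoplus_j(\mn{d_j}\otimes\mathbb{I}_p)$. The plan is to identify $\C^n$ with $\C^{D}\otimes\C^p$, where $D=d_1+\cdots+d_k$. After a permutation unitary, which is harmless by Proposition \ref{prop:autmorph_pres_nor}, the subalgebra becomes $\sB' = \bigl(\bigoplus_j \mn{d_j}\bigr)\otimes \mathbb{I}_p \subseteq \mn{D}\otimes\mn{p}$. Here $\bigoplus_j\mn{d_j}$ is the block-diagonal subalgebra $\sC$ of $\mn{D}$ with multiplicity one. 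The permutation is the one that reorders the basis $e^{(j)}_s\otimes f_t$ (block $j$, internal index $s$, copy $t$) from the ordering "$j$, then $t$, then $s$" to the ordering "$t$, then $j$, then $s$". Since $\mn{n}=\mn{D}\otimes\mn{p}$ is spanned by elementary tensors $x\otimes y$, it suffices to show two things: that $\sC$ is regular in $\mn{D}$, and that $\nor{}$ behaves well under tensoring with $\mn{p}$.

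\textbf{Step 1: tensoring.} If $x\in\nor{\mn{D}}(\sC)$ and $w\in\sU(\mn{p})$, then for $c\in\sC$ we have $(x\otimes w)(c\otimes \mathbb{I}_p)(x\otimes w)^* = xcx^*\otimes \mathbb{I}_p\in\sB'$, and similarly for the adjoint. So $x\otimes w\in\nor{}(\sB')$. Unitaries span $\mn{p}$, hence $\spn(\nor{}(\sB'))\supseteq \spn(\nor{\mn{D}}(\sC))\otimes\mn{p}$. This reduces the problem to the case $p=1$.

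\textbf{Step 2: the case $p=1$.} Here $\sC=\bigoplus_j \mn{d_j}\subseteq\mn{D}$. By Step 1 of Proposition \ref{prop:equal-mult-dim-imply-uni-reg}, the diagonal blocks are already spanned by block-diagonal unitaries in $\sC$ itself. For the off-diagonal blocks, take $a\neq b$ and rank-one partial isometries $v=E_{st}$ mapping a basis vector $e^{(b)}_t$ of block $b$ to a basis vector $e^{(a)}_s$ of block $a$. Then $v^*v=E^{(b)}_{tt}$ is a minimal projection of $\sC$, and $v^*cv = \langle c\, e^{(a)}_s, e^{(a)}_s\rangle E^{(b)}_{tt}\in\sC$. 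Likewise $vcv^*$ is a scalar multiple of $E^{(a)}_{ss}\in\sC$. So every matrix unit $E_{st}$ between different blocks lies in $\nor{}(\sC)$, and these span all off-diagonal blocks. Therefore $\spn(\nor{\mn{D}}(\sC))=\mn{D}$, and by Step 1, $\spn(\nor{}(\sB'))=\mn{D}\otimes\mn{p}=\mn{n}$.

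The main point needing care is the bookkeeping of the permutation identifying $\bigoplus_j(\mn{d_j}\otimes\mathbb{I}_p)$ with $(\bigoplus_j\mn{d_j})\otimes\mathbb{I}_p$, together with confirming the rank-one normalizer computation. The essential observation is that $v$ sends a minimal projection to a minimal projection, which is exactly why equal multiplicity is required; compare with the obstruction in Proposition \ref{prop:reg-imply-equal-mult-gen}.
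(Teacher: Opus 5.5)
Your proof is correct and is essentially the paper's argument: identify $\mn{n}$ with $\mn{D}\otimes\mn{p}$ so that the subalgebra becomes $\sC\otimes\mathbb{I}_p$, show that matrix units of $\mn{D}$ tensored with unitaries of $\mn{p}$ normalize it, and take spans. The differences are cosmetic. You make explicit the permutation conjugation (justified via Proposition \ref{prop:autmorph_pres_nor}) that the paper treats as a ``natural identification''. You also split diagonal from off-diagonal blocks, whereas the paper handles all matrix units $e_{x,y}$ uniformly via $e_{x,y}Ce_{y,x}=C_{y,y}e_{x,x}$.
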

\begin{proof}
    We will prove that $\sB$ is groupoid regular in $\sA$ (see Theorem \ref{thm:reg-iff-gr-reg}).
    
    Let $N = \sum_{j=1}^k d_j$. We can naturally identify the algebra $\sA = \mn{n}$ with the tensor product:$$\mathcal{A} = \mn{N} \otimes \mn{p}.$$
    Under this identification, let $\sC \subseteq \mn{N}$ be the standard block-diagonal subalgebra:$$\sC = \bigoplus_{j=1}^k \mn{d_j}.$$
    
    Our algebra $\mathcal{B}$ is then exactly the tensor product of $\mathcal{C}$ with the $p \times p$ identity matrix:$$\mathcal{B} = \mathcal{C} \otimes I_p = \{ C \otimes I_p \mid C \in \mathcal{C} \}$$

    Let $e_{x,y}$ for $1 \le x, y \le N$ denote the standard matrix units of $\mn{N}$. Crucially, because $\mathcal{C}$ contains the full matrix algebra for each of its blocks, it contains the entire diagonal of $\mn{N}$. Therefore, the diagonal matrix units belong to $\mathcal{C}$:$$e_{x,x} \in \mathcal{C} \quad \text{for all } 1 \le x \le N.$$

    Choose any two indices $x, y \in \{1, \dots, N\}$. Let $W \in \mn{p}$ be an arbitrary $p \times p$ unitary matrix ($W W^* = W^* W = I_p$). Define the element $v \in \mathcal{A}$ by:$$v = e_{x,y} \otimes W.$$

    Now let $b = C \otimes I_p$ be an arbitrary element of $\mathcal{B}$. Conjugating $b$ by $v$ yields:$$v b v^* = (e_{x,y} \otimes W) (C \otimes I_p) (e_{y,x} \otimes W^*) = (e_{x,y} C e_{y,x}) \otimes (W I_p W^*)$$

    For any $N \times N$ matrix $C$, the operation $e_{x,y} C e_{y,x}$ isolates the $(y,y)$-th entry of $C$ and places it in the $(x,x)$-th position. Thus:$$e_{x,y} C e_{y,x} = C_{y,y} e_{x,x}$$where $C_{y,y} \in \mathbb{C}$ is a scalar. Substituting this back gives:$$v b v^* = (C_{y,y} e_{x,x}) \otimes I_p = C_{y,y} (e_{x,x} \otimes I_p).$$ Because $e_{x,x} \in \mathcal{C}$, this element belongs to $\mathcal{C} \otimes I_p = \mathcal{B}$. Thus, $v \mathcal{B} v^* \subseteq \mathcal{B}$.

    By an identical calculation using $v^* b v$, we find:$$v^* b v = C_{x,x} (e_{y,y} \otimes I_p) \in \mathcal{B}$$Thus, $v^* \mathcal{B} v \subseteq \mathcal{B}$.

    This proves that $v = e_{x,y} \otimes W$ is a partial isometry in the groupoid normalizer $\grnor{\sA} (\sB)$.

    Thus for any $1 \le x, y \le N$ and any unitary $W \in \mn{p}$, the element $e_{x,y} \otimes W$ belongs to $\grnor{\sA}(\sB)$.

    Fix the indices $x$ and $y$. As $W$ ranges over all unitary matrices in $\mn{p}$, its linear span generates the entirety of $\mn{p}$. Therefore:
    $$\spn \{ e_{x,y} \otimes W \mid W \text{ is unitary} \} = e_{x,y} \otimes \mn{p}.$$ Because this holds for every pair of indices $(x,y)$, taking the span over all $1 \le x, y \le N$ yields the sum of all such blocks:

    $$\spn\left(\grnor{\sA}(\sB)\right) \supseteq \spn \{ e_{x,y} \otimes \mn{p} \mid 1 \le x, y \le N \} = \mn{N} \otimes \mn{p}.$$

    Since $\mn{N} \otimes \mn{p} = \mathcal{A}$, we conclude that:
    \[
    \spn\left(\grnor{\sA}(\sB)\right)=\sA.
    \]
    This completes the proof.
\end{proof}

\begin{theorem}[Regular Subalgebras of $\mn{n}$]\label{thm:regularity-char-mnc}
Let $\sB \subseteq \mn{n}$ be a unital $C^*$-subalgebra. Up to unitary conjugation in $\mn{n}$, there exist $d_1, \dots, d_k, p_1, \dots, p_k \in \N$ with
\[
\sum_{j=1}^{k} p_j d_j = n
\]
such that
\[
\sB = \bigoplus_{j=1}^{k} \left( \mn{d_j} \otimes \mathbb{I}_{p_j} \right).
\]
Then $\sB$ is regular if and only if
\[
p_1 = \cdots = p_k.
\]
\end{theorem}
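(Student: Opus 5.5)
The plan is to assemble the theorem from the propositions already proved in this section, with the invariance of regularity under automorphisms (Proposition \ref{prop:autmorph_pres_nor}) used to move to the standard form. First, by Proposition \ref{concrete-inclusion} (equivalently Proposition \ref{Spatial-inclusion}), there is a unitary $u\in\mn{n}$ with $u\sB u^*=\bigoplus_{j=1}^k(\mn{d_j}\otimes\mathbb{I}_{p_j})$ and all $p_j\ge 1$. Since $\operatorname{Ad}_u$ is an automorphism of $\mn{n}$, Proposition \ref{prop:autmorph_pres_nor} shows that $\sB$ is regular if and only if this block-diagonal model is regular. The condition $p_1=\cdots=p_k$ is also well-defined, because by the uniqueness clause of Proposition \ref{Spatial-inclusion} the vector $\vec p$ is determined up to permutation. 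So we may assume $\sB$ is literally the standard block-diagonal algebra.

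For the ``if'' direction, suppose $p_1=\cdots=p_k=:p$. Then $n=(d_1+\cdots+d_k)p$, and Proposition \ref{prop:equal-multi-imply-reg} applies directly: the partial isometries $e_{x,y}\otimes W$ lie in $\grnor{\mn{n}}(\sB)$ and span $\mn{N}\otimes\mn{p}=\mn{n}$. Hence $\sB$ is groupoid regular, and therefore regular by Theorem \ref{thm:reg-iff-gr-reg}.

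For the ``only if'' direction, we argue by contraposition. If $p_a\neq p_b$ for some $a\neq b$, Proposition \ref{prop:reg-imply-equal-mult-gen} gives $z_b\,\grnor{\mn{n}}(\sB)\,z_a=0$, so $\spn(\grnor{\mn{n}}(\sB))\subsetneq\mn{n}$. By Theorem \ref{thm:reg-iff-gr-reg}, regularity is equivalent to $\mn{n}=\spn(\grnor{\mn{n}}(\sB))$, so $\sB$ is not regular.

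The only point needing care is the rank-matching step inside Proposition \ref{prop:reg-imply-equal-mult-gen}: for a partial isometry $w=z_bvz_a$ in the normaliser, the corner isomorphism $x\mapsto wxw^*$ forces $q=r$, while equality of ranks in $\mn{n}$ forces $qp_a=rp_b$. We need that $w\mapsto w x w^*$ is genuinely a $*$-isomorphism between the corners $(w^*w)\sB(w^*w)$ and $(ww^*)\sB(ww^*)$. It is injective because $w^*(wxw^*)w=x$ on the initial corner, and surjective by the symmetric argument with $w^*$. Since that proposition is already established, no further work is needed here. The theorem is essentially the conjunction of the two propositions, once the reduction to standard form and the equivalence of regularity with groupoid regularity are in place.
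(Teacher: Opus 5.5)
Your proposal is correct and matches the paper's proof, which likewise gets ($\Leftarrow$) from Proposition \ref{prop:equal-multi-imply-reg} and ($\Rightarrow$) from Proposition \ref{prop:reg-imply-equal-mult-gen}. Your extra steps make explicit what the paper leaves implicit: the reduction to standard form via Proposition \ref{prop:autmorph_pres_nor}, the appeal to Theorem \ref{thm:reg-iff-gr-reg} to pass between the span of the groupoid normaliser and regularity, and the check that $x\mapsto wxw^*$ is a $*$-isomorphism of corners.
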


\begin{proof}
    \noindent $(\impliedby)$. This follows from Proposition \ref{prop:equal-multi-imply-reg}.

    \vspace{0.5cm}
    \noindent $(\implies)$. Follows from Proposition \ref{prop:reg-imply-equal-mult-gen}.
\end{proof}

\begin{theorem}[Unitary Regular Subalgebras in $\mn{n}$]\label{thm:uni-reg-char-mnc}
Let $\sB \subseteq \mn{n}$ be a unital subalgebra. Up to unitary conjugation in $\mn{n}$, there exist $d_1, \dots, d_k, p_1, \dots, p_k \in \N$ with
\[
\sum_{j=1}^{k} d_j p_j = n
\]
such that
\[
\sB = \bigoplus_{j=1}^{k} \left( \mn{d_j} \otimes \mathbb{I}_{p_j} \right).
\]
Then $\sB$ is unitary regular if and only if
\[
p_1 = \cdots = p_k
\quad \text{and} \quad
d_1 = \cdots = d_k.
\]
\end{theorem}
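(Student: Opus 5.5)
The plan is to assemble the characterization from the preliminary propositions of this section, splitting into the two implications and reducing everything to the standard form via Proposition \ref{prop:autmorph_pres_nor}. Since conjugation by a unitary is an automorphism of $\mn{n}$, unitary regularity of $\sB$ is equivalent to that of the standard block form, so we may assume $\sB = \bigoplus_{j=1}^{k} \left( \mn{d_j} \otimes \mathbb{I}_{p_j} \right)$ exactly.

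For $(\impliedby)$, if $p_1=\cdots=p_k=p$ and $d_1=\cdots=d_k=d$, then $\sB=\Delta_{d,p,k}$ with $dpk=n$. Proposition \ref{prop:equal-mult-dim-imply-uni-reg} then gives unitary regularity directly.

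For $(\implies)$, suppose $\sB$ is unitarily regular. The first step is to show the multiplicities agree. Since $\unor{\sA}(\sB)\subseteq \grnor{\sA}(\sB)\subseteq\nor{\sA}(\sB)$, we have
\[
\mn{n}=\spn\bigl(\unor{\sA}(\sB)\bigr)\subseteq \spn\bigl(\nor{\sA}(\sB)\bigr)=C^*\bigl(\nor{\sA}(\sB)\bigr),
\]
so $\sB$ is regular. Proposition \ref{prop:reg-imply-equal-mult-gen}, or equivalently Theorem \ref{thm:regularity-char-mnc}, then forces $p_1=\cdots=p_k=:p$.

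With a common multiplicity $p$, we are exactly in the setting of Proposition \ref{prop:reg-impy-same-dim}. That proposition shows that if some $d_a\neq d_b$, then $\sB$ is not unitarily regular. Its argument is that every $u\in\unor{\sA}(\sB)$ permutes the minimal central projections while preserving rank, so $z_a u z_b=0$. This kills the nonzero corner $z_a\mn{n}z_b$ in the span. By contraposition, all $d_j$ must be equal.

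I expect no serious obstacle, since the statement is essentially a combination of earlier results. The only delicate point is making sure Proposition \ref{prop:reg-impy-same-dim} applies to the given configuration. That proposition assumes equal multiplicities, so the reduction to equal $p_j$ via regularity must come first. One should also note that the case $k=1$ is trivial, because the dimension condition is then vacuous.
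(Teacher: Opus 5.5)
Your proposal is correct and follows essentially the same route as the paper. Sufficiency comes from Proposition \ref{prop:equal-mult-dim-imply-uni-reg}, and necessity combines Theorem \ref{thm:regularity-char-mnc} (equal $p_j$, via unitary regularity implying regularity) with Proposition \ref{prop:reg-impy-same-dim} (equal $d_j$). Your ordering is in fact slightly more careful than the paper's write-up: you first force a common multiplicity and only then invoke Proposition \ref{prop:reg-impy-same-dim}, whose hypothesis assumes equal $p_j$, whereas the paper cites that proposition before establishing that the $p_j$ agree.
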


\begin{proof}
    \noindent $(\impliedby)$. This follows from Corollary \ref{prop:equal-mult-dim-imply-uni-reg}. 

    \vspace{0.5cm}
    \noindent $(\implies)$. From Proposition \ref{prop:reg-impy-same-dim} it follows that $d_1 = \dots = d_k$. On the other hand if $p_1 = \dots = p_k$ doesn't hold then by Theorem \ref{thm:regularity-char-mnc} $\sB$ cannot be even regular in $\sA$ and consequently cannot be unitary regular. 

    This completes the proof.
\end{proof}

\begin{remark}\label{rem:abs_vs_concrete}
Let us consider the abstract $C^*$-algebra $\C^2:=\{ (\lambda, \mu): \lambda, \mu \in \C \}$. Observe that $\C^2$ admits (at least) two distinct unital embeddings into $\mn{6}$, namely,
\[
\C^2 \cong(\C \otimes \mathbb{I}_3)\oplus (\C \otimes \mathbb{I}_3) \subseteq \mn{6},
\qquad \text{and} \qquad
\C^2 \cong(\C \otimes \mathbb{I}_4)\oplus (\C \otimes \mathbb{I}_2) \subseteq \mn{6}.
\]
By Theorems~\ref{thm:regularity-char-mnc} and~\ref{thm:uni-reg-char-mnc}, the first embedding is unitarily regular (and hence regular), whereas the second embedding is not even regular. This demonstrates that (unitary) regularity is fundamentally a representation-dependent notion: it depends on the specific spatial embedding of the smaller algebra into the larger one. Consequently, when discussing the (unitary) regularity of an abstract $C^*$-algebra, one must always specify the embedding under consideration.
\end{remark}

\begin{cor}[Regular MASAs of $\mn{n}$,\, {cf. \cite[Proposition 3.7]{Bakshi2026}}]\label{cor:reg_masa_mn}
    Let $\sD$ be an abelian subalgebra of $\mn{n}$. Then $\sD$ is regular if and only if, up to unitary conjugation, it is of the form
    \[
    \Delta_p
    :=
    \bigoplus_{1}^{n/p}
    \left(
    \C \otimes \mathbb{I}_p
    \right),
    \]
    for some divisor $p \mid n$. In particular, $\Delta_p$ is a regular MASA if and only if $p = 1$.
\end{cor}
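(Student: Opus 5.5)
The plan is to reduce the statement to Theorem \ref{thm:regularity-char-mnc} in the special case where all blocks are one-dimensional. Since $\sD$ is an abelian unital $C^*$-subalgebra of $\mn{n}$, Proposition \ref{Spatial-inclusion} together with Theorem \ref{thm:artin-wedderburn} shows that, after conjugating by a unitary $u\in\mn{n}$, we may take
\[
\sD=\bigoplus_{j=1}^k\left(\C\otimes\mathbb{I}_{p_j}\right),\qquad \sum_{j=1}^k p_j=n,
\]
with all $d_j=1$. Abelianness forces each $d_j=1$, because $\mn{d_j}$ is commutative only when $d_j=1$. By Proposition \ref{prop:autmorph_pres_nor}, applied to the inner automorphism $\operatorname{Ad}_u$, regularity of $\sD$ is the same as regularity of this standard form.

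For the main equivalence, Theorem \ref{thm:regularity-char-mnc} says the standard form is regular if and only if $p_1=\cdots=p_k=:p$. In that case $kp=n$, so $p\mid n$, $k=n/p$, and the algebra is exactly $\Delta_p$. Conversely, each $\Delta_p$ with $p\mid n$ is of this form with equal multiplicities, so it is regular by Proposition \ref{prop:equal-multi-imply-reg}. It is in fact also unitarily regular by Proposition \ref{prop:equal-mult-dim-imply-uni-reg} with $d=1$, which is consistent with Corollary \ref{abelian case}.

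For the MASA clause, I compute the commutant of $\Delta_p$ inside $\mn{n}$. The algebra $\Delta_p$ is generated by the central projections $z_j$ of rank $p$, so its commutant is $\bigoplus_{j=1}^{n/p}\mn{p}$. This commutant equals $\Delta_p$ itself exactly when $\mn{p}=\C\otimes\mathbb{I}_p$, that is, when $p=1$. So $\Delta_p$ is maximal abelian if and only if $p=1$, and then it is the diagonal masa.

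I do not expect a real obstacle. The only care needed is the bookkeeping in the first step: the unitary conjugation must be justified by Proposition \ref{prop:autmorph_pres_nor}, and we must observe that the multiplicities $p_j$ can be taken nonzero, which Proposition \ref{Spatial-inclusion} guarantees for subalgebras.
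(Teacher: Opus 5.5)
Your proof is correct and follows the paper's intended route. The paper gives no separate proof and treats the corollary as an immediate consequence of Theorem \ref{thm:regularity-char-mnc} specialized to $d_1=\cdots=d_k=1$ (with $\sD$ implicitly unital, as you assumed), and your commutant computation $\Delta_p'=\bigoplus_{j=1}^{n/p}\mn{p}$ correctly settles the MASA clause.
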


\begin{definition}[Homogeneous $C^*$-algebras]
Let $\sB$ be an abstract $C^*$-algebra, not necessarily finite dimensional. We say that $\sB$ is \emph{$d$-homogeneous} if every non-zero inquivalent irreducible representation of $\sB$ has dimension $d$. The algebra $\sB$ is said to be \emph{homogeneous} if it is $d$-homogeneous for some $d \in \N$.
\end{definition}

\begin{prop}
A finite-dimensional $C^*$-algebra $\sA$ is homogeneous if and only if there exist $r,n \in \N$ such that
\[
\sA \cong \bigoplus_{j=1}^{r} \mn{n}.
\]
\end{prop}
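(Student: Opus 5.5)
We prove both directions directly from the Artin--Wedderburn decomposition (Theorem \ref{thm:artin-wedderburn}) and the description of irreducible representations (Proposition \ref{prop:rep_of_fin_alg}).

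\emph{($\Leftarrow$)} Let $\sA \cong \bigoplus_{j=1}^{r} \mn{n}$. By Proposition \ref{prop:rep_of_fin_alg}, the inequivalent irreducible representations of $\sA$ are exactly the $r$ coordinate projections $\pi_j : \sA \to \mn{n}$. Each of these has dimension $n$, so $\sA$ is $n$-homogeneous. Homogeneity is preserved under $*$-isomorphism, since isomorphic algebras have the same irreducible representations up to composition with the isomorphism. Hence $\sA$ itself is homogeneous.

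\emph{($\Rightarrow$)} Suppose $\sA$ is finite-dimensional and $d$-homogeneous.
\begin{itemize}
\item By Theorem \ref{thm:artin-wedderburn}, $\sA \cong \bigoplus_{j=1}^{k} \mn{d_j}$ for some $d_1, \dots, d_k \in \N$.
\item By Proposition \ref{prop:rep_of_fin_alg}, the inequivalent irreducible representations of this direct sum are the coordinate projections $\pi_j$, and $\pi_j$ has dimension $d_j$.
\item Pulling these back along the isomorphism gives a complete list of the inequivalent irreducible representations of $\sA$, again of dimensions $d_1, \dots, d_k$.
\item Homogeneity forces $d_j = d$ for every $j$, so $\sA \cong \bigoplus_{j=1}^{k} \mn{d}$. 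Taking $r = k$ and $n = d$ gives the required form.
\end{itemize}

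\emph{Where care is needed.} The argument uses the fact that Proposition \ref{prop:rep_of_fin_alg} gives \emph{all} irreducible representations up to equivalence. Any irreducible representation $\pi$ of $\bigoplus_j \mn{d_j}$ sends the minimal central projections $z_j$ to mutually orthogonal central projections in $\pi(\sA)'$. By irreducibility, exactly one $\pi(z_j)$ is nonzero, so $\pi$ factors through $\mn{d_j}$. Every irreducible representation of $\mn{d_j}$ is equivalent to the identity representation, which has dimension $d_j$.

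We also read ``non-zero'' in the definition as excluding the zero representation. Otherwise homogeneity could never hold, because the zero representation has dimension $0$.
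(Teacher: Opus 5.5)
Your proof is correct and follows the same route as the paper: Artin--Wedderburn, the identification of the irreducible representations with the coordinate projections, and homogeneity forcing all $d_j$ to be equal (and the converse read off the same way). Your extra justification, via the minimal central projections and Schur's lemma, that every irreducible representation factors through a single summand fills in a step the paper leaves to the cited reference.
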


\begin{proof}
\noindent $(\implies)$.
By Proposition \ref{thm:artin-wedderburn}, suppose
\[
\sA \cong \bigoplus_{j=1}^{k} \mn{d_j}.
\]
Now by Proposition~\ref{prop:rep_of_fin_alg}, the coordinate projections
\[
\pi_j : \sA \longrightarrow \mn{d_j}, \qquad j=1,\ldots,k,
\]
constitute, up to unitary equivalence, the complete list of non-zero irreducible representations of $\sA$. Since $\sA$ is $d$-homogeneous, every irreducible representation has dimension $d$. Hence,
\[
d_j=\dim(\pi_j)=d,\qquad j=1,\ldots,k,
\]
and therefore
\[
\sA \cong \bigoplus_{j=1}^{k} \mn{d}.
\]

\medskip

\noindent $(\impliedby)$.
Conversely, suppose
\[
\sA \cong \bigoplus_{j=1}^{r} \mn{n}.
\]
Again, by Proposition~\ref{prop:rep_of_fin_alg}, the coordinate projections
\[
\pi_j : \sA \longrightarrow \mn{n}, \qquad j=1,\ldots,r,
\]
are, up to unitary equivalence, the only non-zero irreducible representations of $\sA$. Since
\[
\dim(\pi_j)=n,\qquad j=1,\ldots,r,
\]
it follows that every non-zero irreducible representation of $\sA$ has dimension $n$. Thus $\sA$ is $n$-homogeneous.
\end{proof}

\begin{prop}\label{homogenity}
    Let $\mathcal{B}\subseteq \mn{n}$ be a unital inclusion. We know by Proposition \ref{Spatial-inclusion} there exist $d_1, \dots, d_k, p_1, \dots, p_k \in \N$ with
    \[
    \sum_{j=1}^k d_j p_j = n,
    \]
such that
    \[
    \mathcal{B}
    \cong
    \bigoplus_{j=1}^k \left(\mn{d_j} \otimes \mathbb{I}_{p_j}\right).
    \]
Then, $\mathcal{B}$ is a homogeneous $C^*$-subalgebra of $\mathcal{A}$ if and only if $d_1= \dots= d_k$.
\end{prop}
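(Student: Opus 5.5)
The plan is to reduce the statement to the preceding proposition characterizing homogeneous finite-dimensional $C^*$-algebras, combined with Proposition \ref{prop:abs-iso-concrete}.

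First, by Proposition \ref{prop:abs-iso-concrete}, each summand $\mn{d_j} \otimes \mathbb{I}_{p_j}$ is abstractly $^*$-isomorphic to $\mn{d_j}$ via $A\mapsto A\otimes \mathbb{I}_{p_j}$. Taking the direct sum of these isomorphisms gives a $^*$-isomorphism
\[
\mathcal{B} \cong \bigoplus_{j=1}^k \mn{d_j}.
\]
Homogeneity is defined purely in terms of irreducible representations of the abstract algebra, so it is invariant under $^*$-isomorphism. Hence $\mathcal{B}$ is homogeneous if and only if $\bigoplus_{j=1}^k \mn{d_j}$ is.

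Next, by Proposition \ref{prop:rep_of_fin_alg}, the irreducible representations of $\bigoplus_{j} \mn{d_j}$ are, up to equivalence, exactly the coordinate maps $\pi_j$, with $\dim \pi_j = d_j$. Suppose $d_1=\dots=d_k=d$. Then every irreducible representation has dimension $d$, so the algebra is $d$-homogeneous. Conversely, if the algebra is $d$-homogeneous, then $d_j=\dim\pi_j=d$ for all $j$. Equivalently, one can cite the preceding proposition directly.

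The only step that needs care is the direction ``homogeneous $\Rightarrow$ all $d_j$ equal''. The preceding proposition only gives $\mathcal{B}\cong\bigoplus_{j=1}^r \mn{n'}$ for some $r,n'$, so we must pass from this to the given $d_j$'s. This uses uniqueness of the Artin--Wedderburn decomposition, namely that the multiset $\{d_j\}$ is determined by the isomorphism class, as recorded in Proposition \ref{Spatial-inclusion}. It can also be avoided by arguing directly with the coordinate representations $\pi_j$ as above. Using the $\pi_j$ directly is the cleanest route, since these representations are pairwise inequivalent and exhaust the irreducibles.
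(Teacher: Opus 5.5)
Your proposal is correct and matches the paper's proof. Both arguments use Proposition~\ref{prop:abs-iso-concrete} to pass to $\bigoplus_{j}\mn{d_j}$, obtain the forward direction by reading off $d_j=\dim\pi_j=d$ from the coordinate irreducible representations of Proposition~\ref{prop:rep_of_fin_alg}, and settle the converse by citing the preceding proposition; using the $\pi_j$ directly, as you suggest, is exactly how the paper avoids invoking uniqueness of the Artin--Wedderburn decomposition.
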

\begin{proof}
\noindent ($\implies$). By proposition \ref{prop:abs-iso-concrete},
\[
    \mathcal{B}
    \cong
    \bigoplus_{j=1}^k \left(\mn{d_j} \otimes \mathbb{I}_{p_j}\right)
    \cong \bigoplus_{j=1}^{k} \mn{d_j}.
    \]

Now by Proposition~\ref{prop:rep_of_fin_alg}, the coordinate projections
\[
\pi_j : \sB \longrightarrow \mn{d_j}, \qquad j=1,\ldots,k,
\]
constitute, up to unitary equivalence, the complete list of non-zero irreducible representations of $\sB$. Since $\sB$ is homogeneous, say $d$-homogeneous, every irreducible representation has dimension $d$. Hence,
\[
d_j=\dim(\pi_j)=d,\qquad j=1,\ldots,k.
\]

\vspace{0.2cm}
    \noindent ($\impliedby$). If $d_1 = d_2 = \dots = d_k = d$, say, then $$\sB \cong \bigoplus_{j=1}^k (\mn{d}\otimes \mathbb{I}_{p_j})\cong \bigoplus_{1}^k \mn{d}.$$
    Now by previous proposition $\sB$ is homogeneous.
 
\end{proof}

By Corollary \ref{abelian case} we know that for the abelian subalgebra in $\mn{n}$ all three type of regularities are equivalent. Now by Theorem \ref{thm:regularity-char-mnc} and Proposition \ref{homogenity}, we have a more general version of it.

\begin{cor}\label{cor:hom-imply-reg-coincide}
    Let $\mathcal{B}\subseteq \mn{n}$ be a homogeneous subalgebra. Then all three types of regularities are equivalent.  
\end{cor}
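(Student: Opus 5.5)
The plan is to fix a homogeneous unital subalgebra $\sB\subseteq\mn{n}$ and use Proposition \ref{prop:autmorph_pres_nor} to assume it is in standard form $\bigoplus_{j=1}^k(\mn{d_j}\otimes\mathbb{I}_{p_j})$. Conjugation by a unitary of $\mn{n}$ is an automorphism, so this changes none of the three regularity properties. Proposition \ref{homogenity} then gives $d_1=\cdots=d_k=:d$, and the goal reduces to showing that regularity, groupoid regularity and unitary regularity all coincide for $\sB=\bigoplus_{j=1}^k(\mn{d}\otimes\mathbb{I}_{p_j})$.

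The implications are arranged in a cycle.
\begin{enumerate}
\item[(a)] Unitary regularity implies regularity, because $\unor{\sA}(\sB)\subseteq\grnor{\sA}(\sB)\subseteq\nor{\sA}(\sB)$ and generated $C^*$-algebras are monotone in the generating set.
\item[(b)] Groupoid regularity implies regularity for the same reason, and regularity implies groupoid regularity by Theorem \ref{thm:reg-iff-gr-reg}. So these two notions are already equivalent in general.
\item[(c)] It remains to show that regularity implies unitary regularity. If $\sB$ is regular, Theorem \ref{thm:regularity-char-mnc} (or Proposition \ref{prop:reg-imply-equal-mult} directly, since the $d_j$ are equal) forces $p_1=\cdots=p_k=:p$. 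Then $\sB=\Delta_{d,p,k}$ with $dpk=n$, and Proposition \ref{prop:equal-mult-dim-imply-uni-reg} shows $\Delta_{d,p,k}$ is unitarily regular.
\end{enumerate}
Alternatively, Theorem \ref{thm:uni-reg-char-mnc} gives the last step at once: its two conditions are $p_1=\cdots=p_k$, which is exactly regularity, and $d_1=\cdots=d_k$, which is automatic for homogeneous $\sB$.

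I do not expect a genuine obstacle here, since every step is an earlier result. The only point needing care is that homogeneity is an abstract notion while the regularity characterizations are stated for the concrete standard form. Passing between them requires Proposition \ref{homogenity}, applied through the abstract isomorphism of Proposition \ref{prop:abs-iso-concrete}, together with the invariance of regularity under unitary conjugation from Proposition \ref{prop:autmorph_pres_nor}.
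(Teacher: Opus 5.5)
Your proposal is correct and follows the paper's route. The paper obtains the corollary by combining Proposition \ref{homogenity} (homogeneity forces $d_1=\cdots=d_k$) with Theorem \ref{thm:regularity-char-mnc} (regularity forces $p_1=\cdots=p_k$), implicitly using Theorem \ref{thm:reg-iff-gr-reg} and Theorem \ref{thm:uni-reg-char-mnc}; you simply make explicit the reduction to standard form and the cycle of implications that the paper leaves unstated.
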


Theorems \ref{thm:regularity-char-mnc} and \ref{thm:uni-reg-char-mnc} provide a complete characterization of all (unitary) regular subalgebras, which we summarize in the table below (see table (\ref{tab:tuple_pairs}) below) in $\mn{n}$ upto $n=4$.
\begin{table}[htbp]
\caption{Enumeration of tuple pairs $(\vec{p},\vec{d})$ for $\sum_{j=1}^k p_j d_j = n$}
\label{tab:tuple_pairs}
\renewcommand{\arraystretch}{1.5}
\begin{tabular}{cll}
\hline\hline
$n$
&
\multicolumn{1}{c}{\begin{tabular}{@{}c@{}}
\texttt{Regularity}\\
$p_1=\dots=p_k$
\end{tabular}}
&
\multicolumn{1}{c}{\begin{tabular}{@{}c@{}}
\texttt{Unitary Regularity}\\
$p_1=\dots=p_k$\\
and $d_1=\dots=d_k$
\end{tabular}}
\\
\hline
1 
& \begin{tabular}{@{}l@{}} 
    $p=(1), \quad d=(1)$\\
  \end{tabular} 
& \begin{tabular}{@{}l@{}} 
    $p=(1), \quad d=(1)$ 
  \end{tabular} \\
\hline
2 
& \begin{tabular}{@{}l@{}} 
    $p=(1), \quad d=(2)$ \\ 
    $p=(2), \quad d=(1)$ \\ 
    $p=(1, 1), \quad d=(1, 1)$ 
  \end{tabular} 
& \begin{tabular}{@{}l@{}} 
    $p=(1), \quad d=(2)$ \\ 
    $p=(2), \quad d=(1)$ \\ 
    $p=(1, 1), \quad d=(1, 1)$ 
  \end{tabular} \\
\hline
3 
& \begin{tabular}{@{}l@{}} 
    $p=(1), \quad d=(3)$ \\ 
    $p=(3), \quad d=(1)$ \\ 
    $p=(1, 1), \quad d=(1, 2)$ \\ 
    $p=(1, 1), \quad d=(2, 1)$ \\ 
    $p=(1, 1, 1), \quad d=(1, 1, 1)$ 
  \end{tabular} 
& \begin{tabular}{@{}l@{}} 
    $p=(1), \quad d=(3)$ \\ 
    $p=(3), \quad d=(1)$ \\ 
    $p=(1, 1, 1), \quad d=(1, 1, 1)$ 
  \end{tabular} \\
\hline
4 
& \begin{tabular}{@{}l@{}} 
    $p=(1), \quad d=(4)$ \\ 
    $p=(2), \quad d=(2)$ \\ 
    $p=(4), \quad d=(1)$ \\ 
    $p=(1, 1), \quad d=(1, 3)$ \\ 
    $p=(1, 1), \quad d=(3, 1)$ \\ 
    $p=(1, 1), \quad d=(2, 2)$ \\ 
    $p=(2, 2), \quad d=(1, 1)$ \\ 
    $p=(1, 1, 1), \quad d=(1, 1, 2)$ \\ 
    $p=(1, 1, 1), \quad d=(1, 2, 1)$ \\ 
    $p=(1, 1, 1), \quad d=(2, 1, 1)$ \\ 
    $p=(1, 1, 1, 1), \quad d=(1, 1, 1, 1)$ 
  \end{tabular} 
& \begin{tabular}{@{}l@{}} 
    $p=(1), \quad d=(4)$ \\ 
    $p=(2), \quad d=(2)$ \\ 
    $p=(4), \quad d=(1)$ \\ 
    $p=(1, 1), \quad d=(2, 2)$ \\ 
    $p=(2, 2), \quad d=(1, 1)$ \\ 
    $p=(1, 1, 1, 1), \quad d=(1, 1, 1, 1)$ 
  \end{tabular} \\
\hline\hline
\end{tabular}
\end{table}

    \section{Regular Subalgebras in General Finite Dimensional Algebras}
    \label{sec:char_gen}
    
In this section, we characterize (unitary) regularity for arbitrary finite-dimensional unital inclusions of $C^*$-algebras. We begin by introducing the notation and conventions that will be used throughout the remainder of this section.

For a tuple $\Vec{n}=(n_1,\dots,n_l)\in\N^l$, we use the notation
\[
\mn{\Vec{n}}
:=
\bigoplus_{i=1}^l \mn{n_i}.
\]

Let $\Vec{n}\in\N^l$ and $\Vec{d}\in\N^k$ be such that there exists a matrix
\[
\Lambda=[\Lambda_{ij}]\in \mathbb{M}_{l\times k}(\N\cup\{0\})
\]
satisfying
\[
\Lambda\Vec{d}=\Vec{n}.
\]
The pair $(\Vec{d},\Lambda)$ determines a unital embedding
\begin{align*}
\Phi_\Lambda:\mn{\Vec{d}}&\longrightarrow \mn{\Vec{n}},\\
x&\longmapsto
\left(\Phi_\Lambda^{(1)}(x),\dots,\Phi_\Lambda^{(l)}(x)\right),
\end{align*}
where, for $x=(x_1,\dots,x_k)\in\mn{\Vec{d}}$,
\[
\Phi_\Lambda^{(i)}(x)
:=
\bigoplus_{j=1}^k
\left(x_j\otimes I_{\Lambda_{ij}}\right),
\qquad i=1,\dots,l.
\]
Here, by convention, a summand with $\Lambda_{ij}=0$ is omitted.

We write
\[
\sB(\Vec{d},\Lambda)
:=
\Phi_\Lambda\!\left(\mn{\Vec{d}}\right)
\subseteq
\mn{\Vec{n}}.
\]

It is immediate that $\sB(\Vec{d},\Lambda)$ is a unital $C^*$-subalgebra of $\mn{\Vec{n}}$. It is known that unital $C^*$-subalgebra of $\mn{\Vec{n}}$ arises in this way:

\begin{prop}[\cite{davidson_book} ]\label{prop:subalg_of_fin_dim_alg}
Let $\Vec{n}=(n_1,\dots,n_l)\in\N^l$. A subset
$
\sB\subseteq\mn{\Vec{n}}
$
is a unital $C^*$-subalgebra if and only if there exist $k\in\N$, a tuple $\Vec{d}\in\N^k$, and a matrix
\[
\Lambda\in\mathbb{M}_{l\times k}(\N\cup\{0\})
\]
satisfying
\[
\Lambda\Vec{d}=\Vec{n},
\]
such that
\[
\sB=\sB(\Vec{d},\Lambda).
\]
\end{prop}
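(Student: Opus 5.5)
Both directions are standard, and the plan is to reduce to the single-summand case already recorded in Proposition \ref{Spatial-inclusion} (equivalently Proposition \ref{concrete-inclusion}).

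The direction ``if'' is immediate. For $\Lambda\vec d=\vec n$, the map $\Phi_\Lambda$ is a unital $*$-homomorphism: each $\Phi^{(i)}_\Lambda$ is a direct sum of amplifications $x_j\mapsto x_j\otimes I_{\Lambda_{ij}}$, which are $*$-homomorphisms by Proposition \ref{prop:abs-iso-concrete}. The unit of $\mn{\vec d}$ maps to $\bigoplus_j I_{d_j\Lambda_{ij}}=I_{n_i}$ in each coordinate, because $\sum_j\Lambda_{ij}d_j=n_i$. Hence the image $\sB(\vec d,\Lambda)$ is a unital $C^*$-subalgebra, being the image of a $C^*$-algebra under a $*$-homomorphism in finite dimensions.

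For ``only if'', let $\sB\subseteq\mn{\vec n}$ be a unital $C^*$-subalgebra. By Theorem \ref{thm:artin-wedderburn}, fix an isomorphism $\sB\cong\mn{\vec d}$ with $\vec d\in\N^k$, and let $\iota:\mn{\vec d}\to\mn{\vec n}$ be the resulting injective unital $*$-homomorphism onto $\sB$. Composing with the coordinate projection $\pi_i:\mn{\vec n}\to\mn{n_i}$ gives unital $*$-homomorphisms $\pi_i\circ\iota:\mn{\vec d}\to\mn{n_i}$. Proposition \ref{concrete-inclusion} then provides multiplicities $\Lambda_{ij}\in\N\cup\{0\}$ with $\sum_j\Lambda_{ij}d_j=n_i$ and unitaries $u_i\in\mn{n_i}$ such that
\[
u_i(\pi_i\circ\iota)(x)u_i^*=\bigoplus_j x_j\otimes I_{\Lambda_{ij}}.
\]
Here I will use the slightly stronger pointwise form: the conjugated representation equals the standard block form for each $x$, not merely as sets. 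This follows from the decomposition of a representation into irreducibles, since the irreducibles are exactly the coordinate maps by Proposition \ref{prop:rep_of_fin_alg}. Setting $u=(u_1,\dots,u_l)$ gives $\Lambda\vec d=\vec n$ and $u\sB u^*=\sB(\vec d,\Lambda)$.

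The main obstacle is that the statement literally asserts $\sB=\sB(\vec d,\Lambda)$ with no conjugation. This can only be correct in the paper's standing convention that subalgebras are considered up to unitary conjugation in $\mn{\vec n}$, i.e.\ up to an inner automorphism. I would state that convention explicitly and note that it is harmless for the subsequent regularity questions by Proposition \ref{prop:autmorph_pres_nor}, since conjugation by a unitary in $\mn{\vec n}$ is an automorphism. With that convention, the proof is complete. A final check is that $\Lambda$ has no zero column: each column has a positive entry because $\iota$ is injective, so every $x_j$ must appear in some coordinate.
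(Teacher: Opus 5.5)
Your argument is correct. It is the standard proof behind the cited result; the paper itself gives no proof, only the reference to Davidson. You decompose each coordinate representation $\pi_i\circ\iota$ into irreducibles, using one fixed isomorphism $\iota$, so that the pointwise intertwiners $u_i$ assemble into $u=(u_1,\dots,u_l)$ with $u\,\iota(x)\,u^*=\Phi_\Lambda(x)$ for all $x$. Insisting on this pointwise form, rather than the set-level equality of Proposition \ref{concrete-inclusion}, is exactly what makes the blocks match across different summands.

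You are also right that the ``only if'' direction is false as literally stated. For example, $\{\lambda P+\mu(1-P):\lambda,\mu\in\C\}\subseteq\mn{2}$, with $P$ a non-diagonal rank-one projection, is not of the form $\sB(\vec d,\Lambda)$. So the conclusion must read $u\sB u^*=\sB(\vec d,\Lambda)$ for some unitary $u\in\mn{\vec n}$. That is all the paper needs, in view of Proposition \ref{prop:autmorph_pres_nor}.
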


\begin{definition}
    The matrix $\Lambda$ is called \emph{inclusion matrix} of the subalgebra $\sB(\Vec{d}, \Lambda)$.
\end{definition}

\noindent\textbf{Convention.} From now on, let $\vec{d}\in \N^k$, $\vec{n}\in \N^l$, and let $\Lambda$ be an $l \times k$ matrix satisfying $\Lambda \vec{d}=\vec{n}$. We will use the phrase
\[
``\text{the subalgebra }\mn{\vec{d}} \subseteq \mn{\vec{n}} \text{ with inclusion matrix } \Lambda"
\]
to mean the unital $C^*$-subalgebra $\sB(\vec{d}, \Lambda)$ of $\mn{\vec{n}}$.

The main goal of this section is to determine which inclusions in $\mn{\vec{n}}$ are (unitarily) regular. More precisely, we characterize those matrices $\Lambda$ for which the subalgebra $\mn{\vec{d}}$ is (unitarily) regular in $\mn{\vec{n}}$. We begin with an example of an inclusion that is not regular.

\begin{example}
    Consider the following unital inclusion: $$\Lambda = \begin{pmatrix} 1 & 0 \\ 0 & 1 \\ 1 & 1 \end{pmatrix}, \quad \vec{d} = \begin{pmatrix} 2 \\ 4 \end{pmatrix} \text{ and } \vec{n} = \begin{pmatrix} 2 \\ 4 \\ 6 \end{pmatrix}.$$
   Then $\mn{\vec{d}}\subseteq \mn{\vec{n}}$ is not regular with respect to the inclusion matrix $\Lambda$.
\end{example}
\begin{proof}
   Let the ambient algebra be $\mathcal{A} = \mn{2} \oplus \mn{4} \oplus \mn{6}$. The subalgebra $\sB:=\sB(\vec{d}, \Lambda)$ consists of elements of the form:$$b(X, Y) = \left( X, \quad Y, \quad \begin{pmatrix} X & 0 \\ 0 & Y \end{pmatrix} \right)$$where $X \in \mn{2}$ and $Y \in \mn{4}$. The center of $\mathcal{B}$ is spanned by two minimal central projections, corresponding to the identity elements of the $X$-block and the $Y$-block:$$z_1 = b(I_2, 0) = \left( I_2, \quad 0, \quad \begin{pmatrix} I_2 & 0 \\ 0 & 0 \end{pmatrix} \right)$$$$z_2 = b(0, I_4) = \left( 0, \quad I_4, \quad \begin{pmatrix} 0 & 0 \\ 0 & I_4 \end{pmatrix} \right)$$Notice that $z_1 + z_2 = 1_{\mathcal{A}}$.

   We will show that the groupoid normalizer $\grnor{\sA}(\sB)$ completely misses a subspace of $\mn{6}$.

   Let $v = (v_1, v_2, v_3) \in \mathcal{A}$ be an arbitrary partial isometry in $\grnor{\sA}(\sB)$. Because $z_1, z_2 \in \sZ(\mathcal{B})$, we know that multiplying a normalizer by central projections of $\mathcal{B}$ yields another normalizer. Define:$$w := z_2 v z_1$$By construction, $w \in \grnor{\sA}(\sB)$. Let us compute the components of $w = (w_1, w_2, w_3)$ directly in $\mathcal{A}$:
   $$w_1 = 0 \cdot v_1 \cdot I_2 = 0, \,w_2 = I_4 \cdot v_2 \cdot 0 = 0,\, w_3 = \begin{pmatrix} 0 & 0 \\ 0 & I_4 \end{pmatrix} v_3 \begin{pmatrix} I_2 & 0 \\ 0 & 0 \end{pmatrix}$$
   Thus, $w = (0, 0, w_3)$. Inside the $\mn{6}$ summand, $w_3$ represents exactly the bottom-left $4 \times 2$ block of the matrix $v_3$.

   Because $w \in \grnor{\sA}(\sB)$, its initial projection $w^*w$ must be an element of $\mathcal{B}$. Let us compute $w^*w$ in $\mathcal{A}$:$$w^*w = (0, 0, w_3^*w_3).$$
   Since $w^*w \in \mathcal{B}$, there must exist some $X \in \mn{2}$ and $Y \in \mn{4}$ such that:$$(0, 0, w_3^*w_3) = b(X, Y) = \left( X, \quad Y, \quad \begin{pmatrix} X & 0 \\ 0 & Y \end{pmatrix} \right).$$
   By equating the components across the direct sum, we get: $X = 0$ and $ Y = 0$. If $X=0$ and $Y=0$, then the third component must also be identically zero. Therefore:$$w_3^* w_3 = \begin{pmatrix} 0 & 0 \\ 0 & 0 \end{pmatrix}.$$The only matrix whose initial projection is zero is the zero matrix itself. This implies $w_3 = 0$.
   
   We have shown that for any normalizer $v \in \grnor{\sA}(\sB)$, the off-diagonal projection $z_2 v z_1$ is exactly zero. 
   
   By linearity, for any element $T \in \operatorname{span}\left(\grnor{\sA}(\sB)\right)$, we must have:$$z_2 T z_1 = 0.$$ However, in the full ambient algebra $\mathcal{A}$, the subspace $z_2 \mathcal{A} z_1$ is non-zero. For any $T = (T_1, T_2, T_3) \in \mathcal{A}$:$$z_2 T z_1 = \left( 0, 0, \begin{pmatrix} 0 & 0 \\ 0 & I_4 \end{pmatrix} T_3 \begin{pmatrix} I_2 & 0 \\ 0 & 0 \end{pmatrix} \right)$$ which corresponds to the entire space of $4 \times 2$ matrices in the bottom-left corner of the $\mn{6}$ block. Because every element in the span of $\grnor{\sA}(\sB)$ has a strictly zero bottom-left block in $\mn{6}$, the span does not equal $\mathcal{A}$. Therefore:$$\operatorname{span}\left(\grnor{\sA}(\sB)\right) \subsetneq \mathcal{A},$$ and the inclusion $\mathcal{B} \subseteq \mathcal{A}$ is not regular.
\end{proof}

\begin{lem}\label{lem:reg_pres_direct_sum}
Let $\sB_j \subseteq \sA_j$ be a unital inclusion of finite-dimensional $C^*$-algebras for $j = 1, \dots, m$. Suppose that $\sB_j$ is (unitarily) regular in $\sA_j$ for each $j$. Then
\[
\sB := \bigoplus_{j=1}^m \sB_j
\]
is (unitarily) regular in
\[
\sA := \bigoplus_{j=1}^m \sA_j.
\]
\end{lem}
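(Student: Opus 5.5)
The plan is to lift normalisers from each summand into the direct sum by padding them with identities, and then use the finite-dimensional fact that $C^*$-generation equals linear span. Throughout, write $\iota_j:\sA_j\to\sA$ for the canonical (non-unital) inclusion and $e_j=\iota_j(1_{\sA_j})$ for the corresponding central projection of $\sA$. Since each inclusion is unital, $1_{\sB_j}=1_{\sA_j}$, so $e_j\in\sB$.

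\textbf{Regular case.} Take $x\in\nor{\sA_j}(\sB_j)$. I claim $\iota_j(x)\in\nor{\sA}(\sB)$. For $b=(b_1,\dots,b_m)\in\sB$,
\[
\iota_j(x)\,b\,\iota_j(x)^*=\iota_j(xb_jx^*)\in\sB,
\]
because $\iota_j(\sB_j)\subseteq\sB$; the computation with $x^*$ in place of $x$ is identical. By the remark following Definition~\ref{def:regularity}, $C^*(\nor{\sA}(\sB))=\spn(\nor{\sA}(\sB))$. This span therefore contains $\iota_j(\spn\nor{\sA_j}(\sB_j))=\iota_j(\sA_j)$ for each $j$. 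Summing over $j$ gives all of $\sA$.

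\textbf{Unitary case.} Here $\iota_j(u)$ is not a unitary, so I instead use
\[
\tilde u:=(1_{\sA_1},\dots,u,\dots,1_{\sA_m}),
\]
with $u\in\unor{\sA_j}(\sB_j)$ placed in the $j$th slot. This is a unitary of $\sA$, and $\tilde u\,\sB\,\tilde u^*=\sB$ holds componentwise, so $\tilde u\in\unor{\sA}(\sB)$.

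Let $\sD=C^*(\unor{\sA}(\sB))$. It contains $\sB$, hence it contains $e_j$. It also contains $e_j\tilde u=\iota_j(u)$. Consequently $\sD\supseteq\iota_j(C^*(\unor{\sA_j}(\sB_j)))=\iota_j(\sA_j)$ for each $j$, which gives $\sD=\sA$.

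The main obstacle is only in the unitary case: a unitary of one summand cannot simply be extended by zero. Padding with identities and then cutting down by the central projections $e_j\in\sB$ resolves it. The same argument can be used for the regular case if one prefers a uniform proof.
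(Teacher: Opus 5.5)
Your proof is correct. The paper does not actually write out a proof of this lemma (it only remarks that the proof is easy), so there is nothing to compare against; your argument supplies the routine verification it has in mind, namely extension by zero in the regular case, and padding with identities followed by cutting down with the central projections $e_j\in\sB$ in the unitary case.

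The only step you assert without justification is $\sB\subseteq C^*(\unor{\sA}(\sB))$. It holds because $\sB$ is spanned by its unitaries, and each of these lies in $\unor{\sA}(\sB)$ precisely because the inclusion is unital. Alternatively, you only need $e_j=\tfrac12\bigl(1+(2e_j-1)\bigr)$, where $2e_j-1\in\sU(\sB)$.
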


\begin{remark}
The proof of the Lemma \ref{lem:reg_pres_direct_sum}  is easy but it is to be noted that its converse is false! For example, $\C$ is regular (even unitary regular) in $\mn{2}\oplus \mn{3}$
    with the inclusion: 
    \[
    \Lambda = \begin{pmatrix}
        2 \\ 3
    \end{pmatrix}
    \]
    But it is impossible to write it as a direct sum $\sB_1 \oplus \sB_2$ of unital subalgebras.
\end{remark}

The following elementary lemma will be used repeatedly in the sequel. Its proof is straightforward.
\begin{lem}\label{cut-down-regularity}
    Let $\mathcal{B}\subseteq \mathcal{A}$ be an inclusion of finite dimensional $C^*$-algebras and $p$ be a central projection in $\mathcal{A}$. Then $p\grnor{\sA}(\sB)\subseteq \grnor{p\sA}(p\sB)$ and $p\unor{\sA}(\sB) \subseteq \unor{p\sA}(p\sB)$. In particular, if $\sB\subseteq \sA$ is regular (respectively, unitary regular) then $p\sB \subseteq p\sA$ is also regular (respectively, unitary regular).
\end{lem}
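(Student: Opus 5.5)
We need to prove Lemma \ref{cut-down-regularity}: if $p$ is a central projection in $\sA$, then $p\grnor{\sA}(\sB)\subseteq \grnor{p\sA}(p\sB)$ and $p\unor{\sA}(\sB)\subseteq\unor{p\sA}(p\sB)$, and regularity passes to the cut-down.

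The approach is a direct verification, using only that $p$ commutes with everything in $\sA$. Note that $p\sA$ is a $C^*$-algebra with unit $p$, and $p\sB=\{pb: b\in\sB\}$ is a $C^*$-subalgebra of $p\sA$. This holds because $b\mapsto pb$ is a $*$-homomorphism on $\sA$ by centrality, so its image of $\sB$ is a finite-dimensional, hence closed, $*$-subalgebra. Also, if $1_\sA\in\sB$, then $p\sB$ contains $p=1_{p\sA}$.

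\textbf{Step 1: groupoid normalisers.} Take $v\in\grnor{\sA}(\sB)$ and $b\in\sB$. Then
\[
(pv)(pb)(pv)^*=p\,vbv^*\in p\sB,
\]
and symmetrically $(pv)^*(pb)(pv)=p\,v^*bv\in p\sB$. Hence $pv\in\nor{p\sA}(p\sB)$. Moreover
\[
(pv)(pv)^*(pv)=p\,vv^*v=pv,
\]
so $pv$ is a partial isometry and therefore lies in $\grnor{p\sA}(p\sB)$.

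\textbf{Step 2: unitary normalisers.} Take $u\in\unor{\sA}(\sB)$. Then $(pu)^*(pu)=pu^*u=p$ and likewise $(pu)(pu)^*=p$, so $pu$ is a unitary of $p\sA$. From $u\sB u^*=\sB$ we get
\[
(pu)(p\sB)(pu)^*=p\,u\sB u^*=p\sB.
\]
Hence $pu\in\unor{p\sA}(p\sB)$.

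\textbf{Step 3: regularity.} Suppose $\sB$ is regular in $\sA$. By Theorem \ref{thm:reg-iff-gr-reg}, $\sA=\spn\grnor{\sA}(\sB)$. Multiplying by $p$ gives
\[
p\sA=\spn\bigl(p\grnor{\sA}(\sB)\bigr)\subseteq\spn\grnor{p\sA}(p\sB)\subseteq p\sA,
\]
using Step 1 for the middle inclusion. Applying Theorem \ref{thm:reg-iff-gr-reg} to the unital inclusion $p\sB\subseteq p\sA$ then shows that $p\sB$ is regular in $p\sA$.

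For unitary regularity, the span of $\unor{\sA}(\sB)$ is a $*$-algebra, so unitary regularity means $\sA=\spn\unor{\sA}(\sB)$. Multiplying by $p$ and using Step 2, $p\sA=\spn\bigl(p\unor{\sA}(\sB)\bigr)\subseteq\spn\unor{p\sA}(p\sB)$, which gives unitary regularity of $p\sB$ in $p\sA$.

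There is no real obstacle in this argument. The only point needing care is unitality: it is needed both to apply Theorem \ref{thm:reg-iff-gr-reg} to the cut-down inclusion and for $pu$ to make sense as a unitary of $p\sA$. If the inclusion $\sB\subseteq\sA$ is not assumed unital, the regularity statement should instead be phrased directly via $C^*$-generation. In that case the same computation works, because the map $x\mapsto px$ is a surjective $*$-homomorphism $\sA\to p\sA$, so it carries $C^*(\nor{\sA}(\sB))=\sA$ onto $C^*\bigl(p\nor{\sA}(\sB)\bigr)=p\sA$, and Step 1 shows $p\nor{\sA}(\sB)\subseteq\nor{p\sA}(p\sB)$.
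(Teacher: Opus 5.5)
Your argument is correct and is exactly the routine verification the paper has in mind; the paper writes no proof and only calls it straightforward. Centrality of $p$ gives $(pv)(pb)(pv)^*=p\,vbv^*$, $(pv)(pv)^*(pv)=pv$ and $(pu)^*(pu)=(pu)(pu)^*=p$, and spanning sets pass through multiplication by $p$.

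One small correction to your closing remark. Unitality of the inclusion is needed only if you route the regularity clause through Theorem~\ref{thm:reg-iff-gr-reg}. That $pu$ is a unitary of $p\sA$ uses only that $\sA$ is unital, which is automatic here. Your direct $C^*$-generation argument via $p\,\nor{\sA}(\sB)\subseteq\nor{p\sA}(p\sB)$ proves the regularity clause with no unitality assumption at all.
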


\begin{prop}\label{prop:reg_imply_col_cond}
Suppose $\mn{\vec{d}}$ is regular in $\mn{\vec{n}}$ with inclusion matrix $\Lambda$. If there exists an index $i$ and distinct indices $a,b$ such that $\Lambda_{i,a}>0$ and $\Lambda_{i,b}>0$, then the corresponding columns of $\Lambda$ must be identical; that is,
\[
\vec{\Lambda}_a=\vec{\Lambda}_b.
\]
\end{prop}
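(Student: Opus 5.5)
We argue by contradiction. Suppose $\vec{\Lambda}_a\neq\vec{\Lambda}_b$, so there is a row $r$ with, say, $\Lambda_{r,a}>0$ and $\Lambda_{r,b}=0$ (the other case is symmetric after swapping $a$ and $b$). Let $z_a,z_b$ be the minimal central projections of $\sB=\sB(\vec d,\Lambda)$ corresponding to the $a$-th and $b$-th summands of $\mn{\vec d}$, and let $e_i$ denote the central projection of $\mn{\vec n}$ onto the $i$-th summand. Since $\Lambda_{i,a},\Lambda_{i,b}>0$, the corner $e_i z_b\,\mn{\vec n}\,z_a e_i$ is a nonzero space of $(d_b\Lambda_{ib})\times(d_a\Lambda_{ia})$ matrices sitting inside $\mn{n_i}$. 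By Theorem~\ref{thm:reg-iff-gr-reg}, regularity gives $\mn{\vec n}=\spn\grnor{\mn{\vec n}}(\sB)$. It therefore suffices to show that $e_i z_b v z_a=0$ for every $v\in\grnor{\mn{\vec n}}(\sB)$; this contradicts regularity.

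Fix $v\in\grnor{}(\sB)$ and set $w=z_b v z_a$. As in the proof of Proposition~\ref{prop:reg-imply-equal-mult}, $w\in\grnor{}(\sB)$ because $z_a,z_b$ are central in $\sB$. Hence $w^*w\in z_a\sB$ and $ww^*\in z_b\sB$. Write $w^*w=\Phi_\Lambda(Q)$ with $Q$ a projection in the $a$-th summand $\mn{d_a}$, of rank $q$. Similarly write $ww^*=\Phi_\Lambda(R)$ with $R$ a projection in $\mn{d_b}$, of rank $r'$.

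Next we compare ranks summand by summand in $\mn{\vec n}$. Since each $e_t$ is central in $\mn{\vec n}$, $e_tw$ is a partial isometry with initial projection $e_tw^*w$ and final projection $e_tww^*$. Hence $\operatorname{rank}(e_tw^*w)=\operatorname{rank}(e_tww^*)$ for every $t$, that is,
\[
q\,\Lambda_{t,a}=r'\,\Lambda_{t,b}\qquad\text{for all } t=1,\dots,l.
\]
Taking $t=r$ gives $q\Lambda_{r,a}=0$. Since $\Lambda_{r,a}>0$, this forces $q=0$, so $w^*w=0$ and hence $w=0$. In particular $e_i z_b v z_a=0$, which completes the contradiction.

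The argument does not even need the abstract corner isomorphism $\mn{q}\cong\mn{r'}$; the summand-wise rank identity already suffices. The only step needing care is that identity, $\operatorname{rank}(e_tw^*w)=\operatorname{rank}(e_tww^*)$. It holds because $e_t$ commutes with $w$, so $(e_tw)^*(e_tw)=e_tw^*w$ and $(e_tw)(e_tw)^*=e_tww^*$. Together with the explicit form of $\Phi_\Lambda$, which shows that $\Phi_\Lambda(Q)$ has rank $q\Lambda_{t,a}$ inside $\mn{n_t}$, this gives the displayed equation.
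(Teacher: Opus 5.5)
There is a genuine gap in your first sentence: $\vec{\Lambda}_a\neq\vec{\Lambda}_b$ does not imply that some row $r$ has exactly one of $\Lambda_{r,a},\Lambda_{r,b}$ equal to zero. The two columns can have the same zero pattern and differ only in their positive entries, and your ``symmetric'' swap does not cover this case.

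The simplest instance is $l=1$, $\Lambda=(1\;\;2)$, $\vec d=(2,1)$, i.e.\ $\sB=\mn{2}\oplus(\C\otimes\mathbb{I}_2)\subseteq\mn{4}$. The hypothesis holds with $i=1$, $a=1$, $b=2$, and the columns differ, yet $\Lambda$ has no zero entry. Your closing remark also fails here. The summand-wise rank identity reads $q=2r'$, which $q=2$, $r'=1$ satisfies (a partial isometry with $w^*w=z_a$, $ww^*=z_b$, both of rank $2$ in $\mn{4}$). So rank counting alone cannot force $w=0$. More generally, if $\vec\Lambda_a$ and $\vec\Lambda_b$ are proportional with ratio $\neq 1$, the system $q\Lambda_{t,a}=r'\Lambda_{t,b}$ ($1\le t\le l$) has nonzero admissible solutions once $d_a,d_b$ are large enough.

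What excludes such a $w$ is exactly the step you discarded, which is the key step of the paper's proof. $\operatorname{Ad}_w$ restricts to a $*$-isomorphism from $w^*w\,\sB\,w^*w\cong\mn{q}$ onto $ww^*\,\sB\,ww^*\cong\mn{r'}$, hence $q=r'$. Feeding this into your rank identity gives $q(\Lambda_{t,a}-\Lambda_{t,b})=0$ for every $t$. So $\vec\Lambda_a\neq\vec\Lambda_b$ forces $q=0$ and $w=0$, with no case distinction at all.

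The rest of your proof is correct: $z_bvz_a$ is again in the groupoid normaliser, the rank identity follows from the partial isometries $e_tw$, and the contradiction comes from the nonzero corner $e_iz_b\,\mn{\vec n}\,z_a$. With this repair your argument is the contrapositive of the paper's. The paper picks $v$ with $z_bvz_a\neq0$, gets $q=r\ge1$, and reads off $\Lambda_{t,a}=\Lambda_{t,b}$ row by row.

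Alternatively, you could handle the same-support case by cutting down to a row where both entries are positive but unequal, using Lemma~\ref{cut-down-regularity} and Theorem~\ref{thm:regularity-char-mnc}. However, the necessity half of that theorem rests on the same corner-isomorphism argument.
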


\begin{proof}
    Let $z_1, \dots, z_k$ be the minimal central projections of $\mathcal{B}$. By hypothesis, the $a$-th block and $b$-th block both have non-zero multiplicity in the $i$-th summand of $\mathcal{A}$. 

    Because $z_a$ and $z_b$ both have non-zero spatial support inside the $i$-th summand of $\mathcal{A}$, the subspace $z_b \mathcal{A} z_a$ is strictly non-zero.

    Because $z_b \mathcal{A} z_a \neq \{0\}$, the groupoid normalizer must span this space. Therefore, there exists at least one partial isometry $v \in \grnor{\sA}(\sB)$ that has a non-zero component in $z_b \mathcal{A} z_a$. Define:$$w = z_b v z_a$$
    Because $z_a, z_b \in \sZ(\mathcal{B})$ and $v \in \grnor{\sA}(\sB)$, we have $w \in \grnor{\sA}(\sB)$. By our choice of $v$, we know $w \neq 0$.

    Because $w$ normalizes $\mathcal{B}$, its initial projection $P = w^*w$ and final projection $Q = ww^*$ must belong to $\mathcal{B}$.

    Since $w = z_b v z_a$, we have $P = w^*w \le z_a$. Thus, $P$ lives entirely in the $a$-th block of $\mathcal{B}$. As an abstract element of $\mn{d_a}$, $P$ is a projection of some rank $q$.

    Similarly, $Q = ww^* \le z_b$ lives entirely in the $b$-th block of $\mathcal{B}$. As an abstract element of $\mn{d_b}$, $Q$ is a projection of some rank $r$.

    Because $w$ normalizes $\mathcal{B}$, the map $\operatorname{Ad}_w(x) = w x w^*$ defines a $C^*$-algebra isomorphism between the corners:$$P \mathcal{B} P \cong Q \mathcal{B} Q$$Abstractly, $P \mathcal{B} P \cong \mn{q}$ and $Q \mathcal{B} Q \cong \mn{r}$. For these two full matrix algebras to be isomorphic, their dimensions must be equal:$$q = r$$Furthermore, because $w \neq 0$, the projection $P$ is non-zero, meaning $q \ge 1$.

    We now evaluate the element $w$ spatially inside the full algebra $\mathcal{A}$.
Because $w$ is a partial isometry in $\mathcal{A}$, its initial projection $P$ and final projection $Q$ are Murray-von Neumann equivalent in $\mathcal{A}$.

In a finite-dimensional $C^*$-algebra $\mathcal{A} = \mn{\vec{n}}$, two projections are equivalent if and only if they have the exact same matrix rank inside every single direct summand of $\mathcal{A}$.
Thus, for every row $i \in \{1, \dots, l\}$, the spatial rank of $P$ in the $i$-th summand must equal the spatial rank of $Q$ in the $i$-th summand.

The embedding of the $a$-th block into the $i$-th summand is $x \mapsto x \otimes I_{\Lambda_{i,a}}$. Thus, the spatial rank of $P$ in the $i$-th summand is $q \cdot \Lambda_{i,a}$.

The embedding of the $b$-th block into the $i$-th summand is $y \mapsto y \otimes I_{\Lambda_{i,b}}$. Thus, the spatial rank of $Q$ in the $i$-th summand is $r \cdot \Lambda_{i,b}$.

Therefore we get,
$$q \cdot \Lambda_{i,a} = r \cdot \Lambda_{i,b} \quad \text{for all } i \in \{1, \dots, l\}$$
We know $q = r$ and $q \neq 0$. Substituting $q$ for $r$ yields:$$q \cdot \Lambda_{i,a} = q \cdot \Lambda_{i,b}$$
which gives :$$\Lambda_{i,a} = \Lambda_{i,b} \quad \text{for all } i \in \{1, \dots, l\}$$
Therefore, the column vectors are identical:$$\vec{\Lambda}_a = \vec{\Lambda}_b$$This completes the proof.
\end{proof}

\begin{remark}
By Proposition \ref{prop:reg_imply_col_cond}, we can see that $\mn{2} \oplus \mn{2}$ is not regular in $\mn{2}\oplus \mn{4}$ with respect to the inclusion matrix:
\[
\Lambda=
\begin{pmatrix}
1 & 0\\
1 & 1
\end{pmatrix}.
\]
In particular, this is not unitary regular.
\end{remark}

\begin{prop}[Regularity of Inclusions with Identical Columns]\label{prop:reg_identical_cols}
    Let $\vec{d} = (d_1, \dots, d_k)^T$ be a vector of positive integers, and  $\Lambda$ be an $l \times k$ matrix such that all $k$ columns of $\Lambda$ are identical and have strictly positive entries. That is, there exists a vector $\vec{p} = (p_1, \dots, p_l)^T \in \mathbb{N}^l$ with $p_i > 0$ such that:$$\Lambda_{ij} = p_i \quad \text{for all } i \in \{1, \dots, l\} \text{ and } j \in \{1, \dots, k\}$$Let $\vec{n} = (n_1, \dots, n_l)^T$ be the dimension vector given by $\vec{n} = \Lambda \vec{d}$. Then $\mn{\vec{d}}$ is a regular subalgebra of $\mn{\vec{n}}$ with respect to the inclusion matrix $\Lambda$.
\end{prop}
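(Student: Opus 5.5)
Since all columns equal $\vec{p}$, the subalgebra decomposes into summands that each look like the case already settled in Proposition \ref{prop:equal-multi-imply-reg}. Writing $N=d_1+\cdots+d_k$, we have $n_i=p_iN$ for every $i$, and the $i$-th component of $\Phi_\Lambda$ is
\[
x=(x_1,\dots,x_k)\longmapsto \bigoplus_{j=1}^k (x_j\otimes I_{p_i}).
\]
After conjugating the $i$-th summand $\mn{n_i}$ by a permutation unitary $u_i$, this becomes $C(x)\otimes I_{p_i}\in \mn{N}\otimes\mn{p_i}$, where $C(x)=\operatorname{diag}(x_1,\dots,x_k)\in\sC:=\bigoplus_j\mn{d_j}\subseteq\mn{N}$. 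By Proposition \ref{prop:autmorph_pres_nor}, applied to the inner automorphism $\operatorname{Ad}(u_1\oplus\cdots\oplus u_l)$, we may therefore assume
\[
\sB=\{(C\otimes I_{p_1},\dots,C\otimes I_{p_l}) : C\in\sC\}\subseteq \bigoplus_{i=1}^l \mn{N}\otimes\mn{p_i}.
\]
The plan is to show $\spn(\grnor{\sA}(\sB))=\sA$ and conclude with Theorem \ref{thm:reg-iff-gr-reg}.

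The key construction mimics the proof of Proposition \ref{prop:equal-multi-imply-reg}, but acts in only one summand at a time. Fix $i$, indices $1\le x,y\le N$, and a unitary $W\in\mn{p_i}$, and set
\[
v:=(0,\dots,0,e_{x,y}\otimes W,0,\dots,0),
\]
with the nonzero entry in the $i$-th slot. For $b=(C\otimes I_{p_1},\dots,C\otimes I_{p_l})$ we get
\[
vbv^*=(0,\dots,C_{yy}\,e_{x,x}\otimes I_{p_i},\dots,0).
\]

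This is the main obstacle: such an element is generally \emph{not} in $\sB$, because $\sB$ forces the same $C$ in every summand. The naive element therefore fails to normalise when $l\ge 2$. To repair it, I would instead use
\[
v:=(e_{x,y}\otimes W_1,\ \dots,\ e_{x,y}\otimes W_l),
\]
with each $W_i\in\sU(\mn{p_i})$ arbitrary. Then
\[
vbv^*=C_{yy}\,(e_{x,x}\otimes I_{p_1},\dots,e_{x,x}\otimes I_{p_l})=C_{yy}\,\Phi(e_{x,x})\in\sB,
\]
because $e_{x,x}\in\sC$. Symmetrically $v^*bv=C_{xx}\Phi(e_{y,y})\in\sB$, and $v$ is a partial isometry, so $v\in\grnor{\sA}(\sB)$.

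It remains to separate the summands in the span. The minimal central projections $f_i=(0,\dots,I_{n_i},\dots,0)$ of $\sA$ lie outside $\sB$, so we cannot simply cut down by them. Instead, for fixed $x,y$, I will use linearity in each slot separately. Taking $W_{i'}=I$ for all $i'\neq i$ and letting $W_i$ range over $\sU(\mn{p_i})$, the differences
\[
v_{W}-v_{W'}=(0,\dots,e_{x,y}\otimes(W-W'),\dots,0)
\]
lie in the span. Since $\spn\{W-W' : W,W'\in\sU(\mn{p_i})\}$ contains $\spn\{W-I\}$, adding back a scalar multiple of $v_I$ when needed shows that the span contains everything in $e_{x,y}\otimes\mn{p_i}$ placed in the $i$-th slot. (For $p_i=1$ the same argument works using the unitaries $W=\lambda$ with $|\lambda|=1$, since the values $\lambda-1$ span $\C$.) Summing over $i,x,y$ then gives all of $\bigoplus_i\mn{N}\otimes\mn{p_i}=\sA$, which completes the argument.
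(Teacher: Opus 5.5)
Your argument is correct and is essentially the paper's proof. The paper uses the same normalisers: a single block $E\otimes U_i$ sits in position $(j,j')$ of every summand, with independently chosen unitaries $U_i$; these are your $e_{x,y}\otimes W_i$ after the permutation reduction. It then separates the summands by the independence of the $U_i$, which you make explicit through differences.

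The only blemish is the remark about ``adding back a scalar multiple of $v_I$''. That step would not isolate the $i$-th slot, since $v_I$ is nonzero in every summand. It is also never needed: $W=-I$ gives $-2I\in\spn\{W-I : W\in\sU(\mn{p_i})\}$, so this span is already all of $\mn{p_i}$.
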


\begin{proof}
    By the hypothesis, the dimension of the $i$-th summand of $\sA$ is:$$n_i = \sum_{j=1}^k \Lambda_{ij} d_j = \sum_{j=1}^k p_i d_j.$$

    This induces a natural block-matrix decomposition of the ambient space $\mn{n_i}$ into a $k \times k$ grid of major blocks, where the $(j, j')$-th major block has dimensions $(d_j p_i) \times (d_{j'} p_i)$.

    Under this decomposition, the local embedding (see the notations at the beginning of this section) of $x = (x_1, \dots, x_k) \in \mn{\vec{d}}$ into the $i$-th summand is purely block-diagonal:$$\Phi_\Lambda^{(i)}(x) = \begin{pmatrix} x_1 \otimes I_{p_i} & 0 & \dots & 0 \\ 0 & x_2 \otimes I_{p_i} & \dots & 0 \\ \vdots & \vdots & \ddots & \vdots \\ 0 & 0 & \dots & x_k \otimes I_{p_i} \end{pmatrix}$$To prove regularity, we must show that the groupoid normalizer spans every $(j, j')$ major block for every summand $i$.

    \noindent{\bf Step 1:} Constructing the Global Normalizer.

    Fix any target block $j$ and source block $j'$ in $\{1, \dots, k\}$ (they can be the same).
    We want to construct global normalizers (in $\grnor{\sA}(\sB)$) that map the $j'$-th block of $\sB:=\sB(\vec{d}, \Lambda)$ to the $j$-th block.

    \begin{itemize}
        \item[1.] Choose a standard matrix unit $E \in \mn{d_j \times d_{j'}}$ (a matrix with a single $1$ at position $(r, s)$ and $0$ elsewhere).
        \item[2.] For each summand $i = 1, \dots, l$, pick an arbitrary unitary matrix $U_i \in \mathcal{U}(\mn{p_i})$.
    \end{itemize}
    Define the global element $W = (W_1, \dots, W_l) \in \mathcal{A}$ such that in the $i$-th summand, $W_i$ has only one non-zero major block, located at position $(j, j')$, given by the Kronecker product $E \otimes U_i$:
    $$W_i = \begin{pmatrix} 0 & \dots & 0 & \dots & 0 \\ \vdots & & \vdots & & \vdots \\ 0 & \dots & E \otimes U_i & \dots & 0 \\ \vdots & & \vdots & & \vdots \\ 0 & \dots & 0 & \dots & 0 \end{pmatrix} \longleftarrow \text{Row block } j$$$$\qquad \qquad \qquad \uparrow \\ \qquad \qquad \text{Col block } j'$$

    \noindent{\bf Step 2:} Verifying the Normalizer Conditions

    By block-matrix multiplication, $W_i^* W_i$ has a single non-zero major block at position $(j', j')$. We compute it:$$(E \otimes U_i)^* (E \otimes U_i) = (E^* \otimes U_i^*) (E \otimes U_i) = (E^* E) \otimes (U_i^* U_i)$$Because $U_i$ is a unitary matrix, $U_i^* U_i = I_{p_i}$.The matrix $E^* E$ is a $d_{j'} \times d_{j'}$ diagonal matrix with a single $1$ at $(s, s)$, which we denote as the projection $P_s$. Thus:$$(W_i^* W_i)_{(j', j')} = P_s \otimes I_{p_i}=\Phi_\Lambda^{(i)}(0, \dots, P_s, \dots, 0)$$
    Since this holds for every $i$, $W^* W \in \mathcal{B}(\vec{d}, \Lambda)$.

    Let $x = (x_1, \dots, x_k) \in \mathcal{B}$. We compute $W_i \Phi_\Lambda^{(i)}(x) W_i^*$.
    The only interaction occurs when the $(j, j')$ block of $W_i$ hits the $(j', j')$ block of $\Phi_\Lambda^{(i)}(x)$. The result lands in the $(j, j)$ major block:
    $$(E \otimes U_i) (x_{j'} \otimes I_{p_i}) (E \otimes U_i)^*  = (E \otimes U_i) (x_{j'} \otimes I_{p_i}) (E^* \otimes U_i^*)  = (E x_{j'} E^*) \otimes (U_i I_{p_i} U_i^*)$$
    Again, because $U_i$ is unitary, $U_i U_i^* = I_{p_i}$. Thus, the resulting block is:$$(E x_{j'} E^*) \otimes I_{p_i}$$
    Because $E$ is a matrix unit picking out the $(r,s)$ scalar of $x_{j'}$, $E x_{j'} E^*$ is exactly $x_{j',rs} P_r$, which is a valid element of $\mn{d_j}$. This result is exactly $\Phi_\Lambda^{(i)}(0, \dots, E x_{j'} E^*, \dots, 0) \in \mathcal{B}$.

    Therefore, $W \in \grnor{\sA}(\sB)$ for any choice of the unitaries $(U_1, \dots, U_l)$.

    Because the unitary matrices $U_1, \dots, U_l$ are chosen completely independently for each summand $i$, we can take linear combinations of these global normalizers to isolate any specific summand $i$ (forcing the normalizer to be $0$ in all $i' \neq i$).

    Once summand $i$ is isolated, notice that our normalizer block in position $(j, j')$ is $E \otimes U_i$.
    \begin{itemize}
        \item[1.] The linear span of the unitary group in $\mn{p_i}$ is the entire matrix algebra $\mn{p_i}$. Thus, varying $U_i$ spans all possibilities for the right side of the tensor product. 
        \item[2.] The linear span of all standard matrix units $E$ is the entire matrix algebra $\mn{d_j \times d_{j'}}$.
    \end{itemize}
    Because $\mn{d_j \times d_{j'}} \otimes \mn{p_i} \cong \mn{(d_j p_i) \times (d_{j'} p_i)}$, the linear combinations of these normalizers generate every possible matrix inside the $(j, j')$ major block of $\mn{n_i}$. Since this holds for every pair of blocks $(j, j')$ and every summand $i$, the groupoid normalizer spans the entirety of $\mathcal{A} = \bigoplus_{i=1}^l \mn{n_i}$.

    This completes the proof.
\end{proof}

\begin{prop}[Unitary Regularity of Inclusions with Identical Columns]\label{prop:unitary_reg_identical_column}
 Let $\vec{d} = (d_1, \dots, d_k)^T$ be a vector of positive integers such that $d_1=\dots =d_k =d$, and  $\Lambda$ be an $l \times k$ matrix such that all $k$ columns of $\Lambda$ are identical and have strictly positive entries. That is, there exists a vector $\vec{p} = (p_1, \dots, p_l)^T \in \mathbb{N}^l$ with $p_i > 0$ such that:$$\Lambda_{ij} = p_i \quad \text{for all } i \in \{1, \dots, l\} \text{ and } j \in \{1, \dots, k\}$$Let $\vec{n} = (n_1, \dots, n_l)^T$ be the dimension vector given by $\vec{n} = \Lambda \vec{d}$. Then $\mn{\vec{d}}$ is a unitary regular subalgebra of $\mn{\vec{n}}$ with respect to the inclusion matrix $\Lambda$.   
\end{prop}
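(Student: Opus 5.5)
Since all $d_j=d$ and all columns of $\Lambda$ equal $\vec p$, each summand satisfies $n_i=kdp_i$. In the $i$-th summand the subalgebra is exactly $\Delta_{d,p_i,k}=\bigoplus_1^k(\mn{d}\otimes\mathbb{I}_{p_i})$, as in Proposition \ref{prop:equal-mult-dim-imply-uni-reg}. The plan is to mimic that proof summand by summand, building \emph{global} unitaries $U=(U_1,\dots,U_l)\in\mathcal U(\sA)$ that act ``the same way'' on the $\mn{d}$ tensor leg in every summand. Only the $\mn{p_i}$ legs may be chosen independently. This is what keeps $U\sB U^*$ inside $\sB(\vec d,\Lambda)$ rather than merely inside $\bigoplus_i \Delta_{d,p_i,k}$.

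\textbf{Step 1: block-diagonal and permutation normalizers.} Fix $V_1,\dots,V_k\in\sU(\mn{d})$, a permutation $\sigma\in S_k$, and unitaries $W_{i,j}\in\sU(\mn{p_i})$ for each summand $i$ and block $j$. In summand $i$, let $U_i$ be the block matrix whose $(\sigma(j),j)$ major block is $V_j\otimes W_{i,j}$ and whose other blocks are zero. Then
\[
U_i\Phi_\Lambda^{(i)}(x)U_i^*=\bigoplus_j (V_{\sigma^{-1}(j)}x_{\sigma^{-1}(j)}V_{\sigma^{-1}(j)}^*)\otimes I_{p_i}=\Phi_\Lambda^{(i)}(y),
\]
where $y_j=V_{\sigma^{-1}(j)}x_{\sigma^{-1}(j)}V^*_{\sigma^{-1}(j)}$. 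Because $y$ is independent of $i$, we get $U\sB U^*=\sB$. Equal $d$'s make the permuted blocks the same size, and identical columns make the multiplicities agree, so $U$ is a genuine unitary in $\unor{\sA}(\sB)$.

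\textbf{Step 2: spanning.} Fix a summand $i_0$ and a pair of blocks $(r,s)$. I would isolate the $(r,s)$ block of summand $i_0$ by taking linear combinations.

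First choose $\sigma$ with $\sigma(s)=r$ (a transposition, or the identity if $r=s$). Then vary the $W$'s in the $i_0$-th summand while keeping all other data fixed, and subtract. For instance, with $\omega$ a primitive $k$-th root of unity, replacing $W_{i_0,s}$ by $\omega^m W_{i_0,s}$ and averaging $\omega^{-m}U^{(m)}$ over $m$ kills every block except the $(r,s)$ block of summand $i_0$. A simpler alternative is to use $W_{i_0,s}$ versus $-W_{i_0,s}$ and take half the difference. This leaves elements supported only in the $(r,s)$ major block of summand $i_0$, equal to $V_s\otimes W_{i_0,s}$.

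Since $\spn\,\sU(\mn{d})=\mn{d}$ and $\spn\,\sU(\mn{p_{i_0}})=\mn{p_{i_0}}$, these elements span $\mn{d}\otimes\mn{p_{i_0}}=\mn{dp_{i_0}}$ in that block. Summing over $(r,s)$ and $i_0$ shows $\spn\,\unor{\sA}(\sB)=\sA$.

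\textbf{Expected obstacle.} The main subtlety is the isolation step: the unitary must be nonzero in every summand and every block. The sign-flip trick handles this cleanly, since changing one $W_{i_0,s}$ to its negative changes only one block of one summand. The remaining work is bookkeeping: checking that the permutation is applied consistently across summands so that conjugation lands in $\sB(\vec d,\Lambda)$.
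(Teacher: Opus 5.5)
Your proposal is correct and follows essentially the same route as the paper: global block-monomial unitaries whose $\mn{d}$-leg (the permutation together with the $V_j$'s) is common to all summands, while the multiplicity leg is chosen separately in each summand, followed by a tensor-product spanning argument. The one variation is that you let the multiplicity unitary depend on the block ($W_{i,j}$ rather than the paper's single $U_i$ per summand). As a result, your sign flip isolates one major block of one summand directly, instead of relying on the fact that block-monomial unitaries span $\mn{kd}$. One small fix: your root-of-unity variant needs a root of order at least $2$, since for $k=1$ it is vacuous; the sign flip is exactly the order-$2$ case, so nothing is lost.
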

\begin{proof} 
By the hypothesis, the dimension of the $i$-th summand of $\sA$ is:
$$n_i = \sum_{j=1}^k \Lambda_{ij} d_j = \sum_{j=1}^k p_i d = k p_{i}d.$$

  The embedding of $x = (x_1, \dots, x_k)\in \mn{\vec{d}}$  into the $i$-th summand is exactly:
  
  $$\Phi_\Lambda^{(i)}(x) = \begin{pmatrix} x_1 \otimes I_{p_i} & 0 & \dots & 0 \\ 0 & x_2 \otimes I_{p_i} & \dots & 0 \\ \vdots & \vdots & \ddots & \vdots \\ 0 & 0 & \dots & x_k \otimes I_{p_i} \end{pmatrix}$$
where each $x_j \in \mn{d}$. 
    To span the space $\mn{\vec{n}}$, we need a global matrix $W = (W_1, \dots, W_l)$ that is completely unitary everywhere and normalizes $\sB:=\sB(\vec{d}, \Lambda)$. We build it in three pieces:
    
    The Block Shuffle: Pick any permutation $\pi \in S_k$. We will use this to shuffle the $k$ blocks around. Let $E_{\pi(j), j}$ be the $k \times k$ matrix unit that maps block $j$ to block $\pi(j)$.
    
    The Internal Twists: For each block $j$, pick an arbitrary unitary $u_j \in \mathcal{U}(\mn{d})$.
    
    The Multiplicity Mixers: For each summand $i$, pick an arbitrary unitary $U_i \in \mathcal{U}(\mn{p_i})$. We construct the component $W_i$ in the $i$-th summand by combining all three:$$W_i = \sum_{j=1}^k \left( E_{\pi(j), j} \otimes u_j \otimes U_i \right)$$ 
    Because $\pi$ is a permutation, $W_i$ is a block-matrix with exactly one non-zero block in each row and column of blocks. Because $u_j$ and $U_i$ are unitary, each non-zero block is unitary. Therefore, $W_i$ is a strictly unitary matrix ($W_i^* W_i = I_{n_i}$ and $W_i W_i^* = I_{n_i}$).

    Let $x= (x_1,\dots, x_k) \in \mathcal{B}$. Because $W_i$ has exactly one block per row/col, it cleanly shifts the diagonal blocks of $\Phi^{(i)}(x)$ to new positions:$$W_i \Phi^{(i)}(x) W_i^* = \sum_{j=1}^k \left( E_{\pi(j), \pi(j)} \otimes (u_j x_j u_j^*) \otimes (U_i I_{p_i} U_i^*) \right).$$
    
    Since $U_i$ is unitary, $U_i I_{p_i} U_i^* = I_{p_i}$. If we re-index the sum by letting $m = \pi(j)$ (so $j = \pi^{-1}(m)$), the result is:
    $$W_i \Phi^{(i)}(x) W_i^* = \operatorname{diag}\Big( \dots, \; (u_{\pi^{-1}(m)} x_{\pi^{-1}(m)} u_{\pi^{-1}(m)}^*) \otimes I_{p_i}, \; \dots \Big).$$
    
    This is exactly $\Phi^{(i)}(y)$ where the new element $y \in \mathcal{B}$ has blocks given by $y_m = u_{\pi^{-1}(m)} x_{\pi^{-1}(m)} u_{\pi^{-1}(m)}^*$. Note that the new element $y$ depends only on $\pi$ and $u_j$. It does not depend on $i$ or the local unitary $U_i$. Therefore, the output is identical across all summands, meaning $W \mathcal{B} W^* \subseteq \mathcal{B}$. Thus $W$ is a  global unitary normalizer in $\mn{\vec{n}}$.

    Now look at the structure of $W_i$:
    $$W_i = \left( \sum_{j=1}^k E_{\pi(j), j} \otimes u_j \right) \otimes U_i.$$
    
    Let $M = \sum_{j=1}^k E_{\pi(j), j} \otimes u_j$. This $M$ is a $kd \times kd$ "monomial unitary matrix". It is a standard fact of linear algebra that the linear span of all such monomial unitaries (varying $\pi$ and $u_j$) is the entirety of the matrix algebra $\mn{kd}$. So, by taking linear combinations of our normalizers, the $M$ portion spans $\mn{kd}$. Meanwhile, because we can choose $U_i$ completely independently for each summand $i$, and the linear span of $\mathcal{U}(\mn{p_i})$ is $\mn{p_i}$, we can isolate the summands exactly as we did in the proof of Proposition \ref{prop:reg_identical_cols}. The span of the tensor product is the tensor product of the spans:$$\operatorname{span}(W_i) = \mn{kd} \otimes \mn{p_i} \cong \mn{n_i}.$$
    
    Since we span $\mn{n_i}$ for every summand $i$, the unitary normalizers generate the entire ambient algebra $\mn{\vec{n}}$. Therefore the inclusion is unitary regular.
\end{proof}

\begin{theorem}[Characterization of Regular Inclusion]\label{thm:char_regularity}
Let $\vec{n} = (n_1,\dots,n_l)^T\in\mathbb{N}^l$, $\vec{d}=(d_1,\dots,d_k)^T\in\mathbb{N}^k$, and let $\Lambda=[\Lambda_{ij}]_{l\times k}$ satisfy $\Lambda\vec{d}=\vec{n}.$
Then the algebra $\mn{\vec{d}}$ is regular in $\mn{\vec{n}}$ if and only if $\Lambda$ is regular (see definition \ref{def:regular_mat}).
\end{theorem}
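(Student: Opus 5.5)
The plan is to prove the two implications separately, using Lemma \ref{cut-down-regularity}, Proposition \ref{prop:reg_imply_col_cond}, Proposition \ref{prop:reg_identical_cols} and Lemma \ref{lem:reg_pres_direct_sum}.

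\emph{Necessity.} Assume $\sB(\vec{d},\Lambda)$ is regular in $\mn{\vec{n}}$. Condition (ii) of Definition \ref{def:regular_mat} is exactly Proposition \ref{prop:reg_imply_col_cond}. For condition (i), fix a row $i$ and let $e_i$ be the unit of the $i$-th summand $\mn{n_i}$, a central projection of $\sA=\mn{\vec{n}}$. By Lemma \ref{cut-down-regularity}, $e_i\sB \subseteq e_i\sA=\mn{n_i}$ is regular. Here $e_i\sB$ is, up to the identification in Proposition \ref{Spatial-inclusion}, the subalgebra $\bigoplus_{j\in S_i}(\mn{d_j}\otimes \mathbb{I}_{\Lambda_{ij}})$ of $\mn{n_i}$. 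Theorem \ref{thm:regularity-char-mnc} then forces $\Lambda_{ij}$ to be constant for $j\in S_i$. One subtlety must be handled: if two columns $j,j'\in S_i$ are identical, the blocks $j,j'$ of $\mn{\vec{d}}$ still remain distinct summands of $e_i\sB$, since $e_i$ kills nothing in them. So $e_i\sB$ really has block multiplicities $\Lambda_{ij}$, $j\in S_i$, and the theorem applies directly. Alternatively, the argument of Proposition \ref{prop:reg_imply_col_cond} already yields equal rows, since it shows all entries of columns $a$ and $b$ coincide.

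\emph{Sufficiency.} Assume $\Lambda$ is regular. Define an equivalence relation on columns by $j\sim j'$ iff $\vec{\Lambda}_j=\vec{\Lambda}_{j'}$. Let $C_1,\dots,C_m$ be the classes, and let $R_t$ be the set of rows where columns of $C_t$ are nonzero (a nonempty set, since $\Lambda\vec d=\vec n$ forces every column to be nonzero; otherwise the embedding would not be unital/injective). Condition (ii) says that for every row $i$, $S_i$ lies in a single class. Hence the sets $R_t$ are pairwise disjoint, and every row belongs to exactly one $R_t$, because each $n_i>0$ makes $S_i$ nonempty. This gives the decompositions
\[
\sA=\bigoplus_{t=1}^m \sA_t,\qquad \sA_t:=\bigoplus_{i\in R_t}\mn{n_i},\qquad \sB=\bigoplus_{t=1}^m\sB_t,\qquad \sB_t:=\bigoplus_{j\in C_t}\mn{d_j}.
\]
Here $\sB_t$ embeds into $\sA_t$ via the submatrix $\Lambda|_{R_t\times C_t}$, and that submatrix has identical strictly positive columns; condition (i) is automatic in this case. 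Proposition \ref{prop:reg_identical_cols} gives regularity of each $\sB_t\subseteq\sA_t$, and Lemma \ref{lem:reg_pres_direct_sum} assembles them into regularity of $\sB\subseteq\sA$.

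The main obstacle is the bookkeeping in sufficiency. One must check that the block decomposition is legitimate, i.e. that $\sB$ genuinely equals the direct sum $\bigoplus_t \sB_t$ inside $\bigoplus_t\sA_t$. This needs the rows of different classes to be disjoint, so that no summand $\mn{n_i}$ receives blocks from two classes, and that is exactly what condition (ii) provides. Besides this, one must also confirm that the identification of $e_i\sB$ with the standard form in the necessity argument is compatible with Theorem \ref{thm:regularity-char-mnc}; this holds up to unitary conjugation within $\mn{n_i}$, by Proposition \ref{prop:autmorph_pres_nor}.
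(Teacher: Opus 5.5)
Your proposal is correct and follows essentially the same route as the paper. For necessity you cut down by the central projections of $\mn{\vec{n}}$ and invoke Theorem~\ref{thm:regularity-char-mnc} together with Proposition~\ref{prop:reg_imply_col_cond}; for sufficiency you group identical columns into blocks with identical strictly positive columns and apply Proposition~\ref{prop:reg_identical_cols} and Lemma~\ref{lem:reg_pres_direct_sum}. Your side remark that condition (i) already follows from condition (ii), which makes the cut-down step unnecessary, is a valid shortcut; note, however, that the nonvanishing of every column comes from the injectivity of $\Phi_\Lambda$, not from $\Lambda\vec{d}=\vec{n}$ alone.
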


\begin{proof}
    \noindent($\implies$).  Assume that $\sB(\vec{d}, \Lambda)$ is regular in $\mn{\vec{n}}$.
    
    Now consider the canonical central projection $p_i: \bigoplus_{i=1}^l\mn{n_i} \to \mn{n_i}$ for every $i\in \{1,\dots, l\}$. We have, for all $i = 1, \dots, l$,
    $$\bigoplus_{j\in S_i}\mn{d_j} \otimes \mathbb{I}_{\Lambda_{ij}}=\bigoplus_{j=1}^k\mn{d_j} \otimes \mathbb{I}_{\Lambda_{ij}}=p_i\left(\sB(\vec{d}, \Lambda)\right)   \subseteq p_i (\mn{\vec{n}})=\mn{n_i}.$$ 
    
    Now by Lemma \ref{cut-down-regularity}, we get that $p_i(\sB(\vec{d}, \Lambda))$ is regular in $\mn{n_i}$. Then by Theorem \ref{thm:regularity-char-mnc} for each $t\in S_i$, $\Lambda_{it}$ are same.

    The necessity of condition (ii) is established in Proposition \ref{prop:reg_imply_col_cond}.

    \vspace{0.5cm}
    \noindent($\impliedby$). Assume $\Lambda$ is regular.

    Define an equivalence relation on the columns (blocks of $\mathcal{B}:=\mn{\vec{d}}$) by $a \sim b$ if and only if $\vec{\Lambda}_a = \vec{\Lambda}_b$. By $(ii)$, if row $i$ has non-zero entries in column $a$ and column $b$, then $a \sim b$. Therefore, we can partition the rows and columns into distinct families. By applying a permutation to the columns (grouping equivalence classes together) and a permutation to the rows (grouping the rows that support each class together), the matrix $\Lambda$ transforms into a strict block-diagonal form:$$\tilde{\Lambda} = \begin{pmatrix}  \Lambda^{(1)} & 0 & \dots & 0 \\  0 & \Lambda^{(2)} & \dots & 0 \\  \vdots & \vdots & \ddots & \vdots \\  0 & 0 & \dots & \Lambda^{(M)}  \end{pmatrix}$$where each block $\Lambda^{(m)}$ is a matrix where all columns are identical and strictly positive. Permuting rows and columns simply corresponds to relabeling the summands of $\mathcal{A}$ and $\mathcal{B}$, which trivially preserves regularity. Thus, we can assume $\Lambda$ is already in this block-diagonal form.

    Because $\Lambda$ is block-diagonal, the abstract algebra $\mathcal{B}$ naturally decomposes into $\mathcal{B}^{(1)} \oplus \dots \oplus \mathcal{B}^{(M)}$ (one for each set of columns). Similarly, the ambient algebra $\mathcal{A}$ decomposes into $\mathcal{A}^{(1)} \oplus \dots \oplus \mathcal{A}^{(M)}$ (one for each set of rows). Crucially, the global embedding $\Phi_\Lambda$ maps $\mathcal{B}^{(m)}$ entirely and exclusively into $\mathcal{A}^{(m)}$. Therefore, the global inclusion splits into a true direct sum of independent inclusions:$$\mathcal{B}(\vec{d}, \Lambda) \subseteq \mathcal{A} \quad \text{is isomorphic to} \quad \bigoplus_{m=1}^M \left( \mathcal{B}^{(m)}(\vec{d}^{(m)}, \Lambda^{(m)}) \subseteq \mathcal{A}^{(m)} \right)$$

    Now by Proposition \ref{prop:reg_identical_cols}, each of $\left( \mathcal{B}^{(m)}(\vec{d}^{(m)}, \Lambda^{(m)}) \subseteq \mathcal{A}^{(m)} \right)$ is regular inclusion and since regularity is preserved under direct sum (by Lemma \ref{lem:reg_pres_direct_sum}), we have $\sB(\vec{d}, \sA)$ is regular.

    This completes the proof!
\end{proof}

As an immediate corollary of Theorem \ref{thm:char_regularity} we get:
\begin{cor}
    A finite-dimensional unital inclusion $\mathcal{B} \subseteq \mathcal{A}$ is regular if and only if, up to a permutation of rows and columns, its inclusion matrix $\Lambda$ is a block-diagonal matrix:$$\Lambda = \begin{pmatrix}  \Lambda^{(1)} & 0 & \dots & 0 \\  0 & \Lambda^{(2)} & \dots & 0 \\  \vdots & \vdots & \ddots & \vdots \\  0 & 0 & \dots & \Lambda^{(M)}  \end{pmatrix}$$where each block $\Lambda^{(m)}$ is a rectangular matrix in which every row consists of strictly positive, equal entries.
\end{cor}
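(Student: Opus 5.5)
The plan is to derive this corollary from Theorem \ref{thm:char_regularity}. It suffices to show that a nonnegative integer matrix $\Lambda$ with no zero row and no zero column is regular in the sense of Definition \ref{def:regular_mat} if and only if, after permuting rows and columns, it becomes block diagonal with each block $\Lambda^{(m)}$ having, in every row, strictly positive and equal entries. The nondegeneracy comes for free: every $n_i>0$ forces a nonzero row, and $\Phi_\Lambda$ is injective on each $\mn{d_j}$, which forces a nonzero column. Permuting rows and columns only relabels summands of $\sA$ and $\sB$. That relabeling is an automorphism-type change, so regularity is unaffected, as in the proof of Theorem \ref{thm:char_regularity} (cf. Proposition \ref{prop:autmorph_pres_nor}).

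\textbf{($\Rightarrow$)} Assume $\Lambda$ is regular and define $a\sim b$ iff $\vec{\Lambda}_a=\vec{\Lambda}_b$. Write $C_1,\dots,C_M$ for the equivalence classes of columns, and let $R_m:=\{i:\Lambda_{ij}>0 \text{ for some } j\in C_m\}$.
\begin{enumerate}
\item[(a)] By condition (ii), every row support $S_i$ lies inside a single class. Hence each row meets exactly one class, and the sets $R_m$ are pairwise disjoint.
\item[(b)] The sets $R_m$ cover all rows, because no row is zero.
\item[(c)] Each $R_m$ is nonempty, because no column is zero.
\item[(d)] Since the columns in $C_m$ coincide, every $i\in R_m$ satisfies $S_i=C_m$.
\end{enumerate}
Ordering the columns class by class and the rows as $R_1,\dots,R_M$ then produces the block-diagonal form. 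By (d), each $\Lambda^{(m)}$ has all entries positive. By condition (i), each row of $\Lambda^{(m)}$ is constant, equal to $p_i$.

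\textbf{($\Leftarrow$)} Assume $\Lambda$ has this block form. For a row $i$ in block $m$, the support $S_i$ is exactly the set of columns of block $m$, and all entries there equal the common row value. This gives condition (i). Any two columns $j,j'$ in block $m$ have the same entries in every row: inside the block each row is constant, and outside the block both entries are $0$. This gives condition (ii). Theorem \ref{thm:char_regularity} then yields regularity.

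The only delicate step is (a)–(d): proving that the row sets attached to distinct column classes are disjoint and exhaust all rows. This uses condition (ii) together with the absence of zero rows and columns, and it is what makes $\Lambda^{(m)}$ strictly positive rather than merely supported on the diagonal blocks.
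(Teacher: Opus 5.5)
Your proposal is correct and follows essentially the paper's route: the corollary comes from Theorem \ref{thm:char_regularity}, combined with grouping columns into classes of identical columns. The paper carries out that grouping in the converse part of the theorem's proof. Your steps (a)--(d), including the explicit use of the absence of zero rows and zero columns, make precise what the paper leaves implicit when it calls the result an immediate corollary.
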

We provide a more streamlined  proof of the characterization of unitary regularity for an inclusion of general multi-matrix algebras (see \cite{Bakshi2026}).
\begin{theorem}[Characterization of Unitary Regular Inclusion]\label{thm:char_unitary_regularity}
Let $\vec{n} = (n_1,\dots,n_l)^T\in\mathbb{N}^l$, $\vec{d}=(d_1,\dots,d_k)^T\in\mathbb{N}^k$, and let $\Lambda=[\Lambda_{ij}]_{l\times k}$ satisfy $\Lambda\vec{d}=\vec{n}.$
Then the algebra $\mn{\vec{d}}$ is unitary regular in $\mn{\vec{n}}$ if and only if the following conditions hold:
\begin{enumerate}
    \item[(i)] $\Lambda$ is regular (see definition \ref{def:regular_mat}), 
    \item[(ii)] for each $ i = 1, \dots, l$ we have $d_j = d_{j'}$ whenever $j, j' \in S_i$, 
\end{enumerate}
where
\[
S_i=\{\,j:\Lambda_{ij}>0\,\}
\]
denotes the support of the $i$th row of $\Lambda$.    
\end{theorem}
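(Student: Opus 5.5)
The proof splits into the two implications, following the template of Theorem \ref{thm:char_regularity}.

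\emph{Necessity.} Assume $\sB:=\sB(\vec{d},\Lambda)$ is unitary regular in $\mn{\vec{n}}$. Since $\unor{\sA}(\sB)\subseteq\grnor{\sA}(\sB)$, the inclusion is also regular, and Theorem \ref{thm:char_regularity} gives (i).

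For (ii), fix a row $i$. Let $p_i$ be the central projection of $\mn{\vec{n}}$ onto the $i$th summand. By Lemma \ref{cut-down-regularity}, the cut-down $p_i\sB=\bigoplus_{j\in S_i}\mn{d_j}\otimes\mathbb{I}_{\Lambda_{ij}}$ is unitary regular in $\mn{n_i}$. Theorem \ref{thm:uni-reg-char-mnc} then forces $d_j=d_{j'}$ for all $j,j'\in S_i$.

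One point needs care: $p_i\sB$ is exactly this concrete block-diagonal algebra. This holds because the columns with $\Lambda_{ij}=0$ are omitted, and the remaining multiplicities are the nonzero entries of row $i$.

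\emph{Sufficiency.} Assume (i) and (ii). As in the proof of Theorem \ref{thm:char_regularity}, group the columns by the relation $a\sim b\iff\vec{\Lambda}_a=\vec{\Lambda}_b$. After permuting rows and columns, which is just a relabeling of summands and preserves unitary regularity by Proposition \ref{prop:autmorph_pres_nor} or directly, $\Lambda$ becomes block diagonal with blocks $\Lambda^{(m)}$. Each $\Lambda^{(m)}$ has identical, strictly positive columns.

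Inside a block $\Lambda^{(m)}$, any two columns $j,j'$ share a row $i$ with $\Lambda_{ij},\Lambda_{ij'}>0$; indeed every row of that block has full support on its columns. Hypothesis (ii) therefore gives $d_j=d_{j'}$, so $\vec{d}^{(m)}$ is constant on the block.

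Proposition \ref{prop:unitary_reg_identical_column} now shows that each $\sB^{(m)}\subseteq\sA^{(m)}$ is unitary regular. Lemma \ref{lem:reg_pres_direct_sum}, in its unitary version, then gives unitary regularity of the whole inclusion.

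\emph{Main obstacle.} The principal subtlety is the block decomposition step: showing that the support of each row of $\Lambda^{(m)}$ is the full column set of that block. This uses condition (ii) of Definition \ref{def:regular_mat}. If row $i$ contained a column $j\in S_i$ and a column $j'\notin S_i$ with the same column vector, then $\Lambda_{ij}=\Lambda_{ij'}$ would contradict $\Lambda_{ij'}=0$. So each row's support is a union of full equivalence classes, and condition (ii) of regularity makes it exactly one class.

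A second point to check is that the unitary normalizers of the summands combine into a unitary normalizer of the direct sum. This is immediate, since the direct sum of unitaries is unitary.
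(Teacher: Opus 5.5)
Your proposal is correct and follows the paper's proof essentially step for step. For necessity, you pass to regularity and then apply Lemma \ref{cut-down-regularity} together with Theorem \ref{thm:uni-reg-char-mnc} in each summand $\mathbb{M}_{n_i}(\mathbb{C})$; for sufficiency, you use the block-diagonal decomposition of $\Lambda$, Proposition \ref{prop:unitary_reg_identical_column} on each block, and Lemma \ref{lem:reg_pres_direct_sum}. The only difference is that you spell out why each $\vec{d}^{(m)}$ is constant: every row of a block has full support on that block's columns, so hypothesis (ii) applies to any pair of them. The paper simply asserts this.
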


\begin{proof}
 \noindent($\implies$).  Assume that $\sB(\vec{d}, \Lambda)$ is unitary regular in $\mn{\vec{n}}$. This implies that $\sB(\vec{d}, \Lambda)$ is regular in $\mn{\vec{n}}$. Thus the condition of Theorem \ref{thm:char_regularity} is satisfied. As in the proof of Theorem \ref{thm:char_regularity}, we now consider the canonical central projection $p_i: \mn{\vec{n}} \to \mn{n_i}$ for every $i\in \{1,\dots, l\}$. We have, for all $i = 1, \dots, l$,
    $$\bigoplus_{j\in S_i}\mn{d_j} \otimes \mathbb{I}_{\Lambda_{ij}}=\bigoplus_{j=1}^k\mn{d_j} \otimes \mathbb{I}_{\Lambda_{ij}}=p_i\left(\sB(\vec{d}, \Lambda)\right)   \subseteq p_i (\mn{\vec{n}})=\mn{n_i}.$$    
    Now by Lemma \ref{cut-down-regularity}, we get that $p_i(\sB(\vec{d}, \Lambda))$ is unitary regular in $\mn{n_i}$. Then by Theorem \ref{thm:uni-reg-char-mnc} for each $j, j^{'} \in S_i$, $\Lambda_{ij}= \Lambda_{ij^{'}}$ and $d_j= d_{j^{'}}$ are same. 

    \vspace{0.5cm}
    \noindent($\impliedby$) As $\Lambda$ is regular, by the same argument as in the converse part of the proof of Theorem \ref{thm:char_regularity}, we get
    $$\mathcal{B}(\vec{d}, \Lambda) \subseteq \mathcal{A} \quad \text{is isomorphic to} \quad \bigoplus_{m=1}^M \left( \mathcal{B}^{(m)}(\vec{d}^{(m)}, \Lambda^{(m)}) \subseteq \mathcal{A}^{(m)} \right).$$

   By condition $(ii)$, we also get that for each $m$, $\vec{d}^{(m)}$ has the same components. Now by Proposition \ref{prop:unitary_reg_identical_column} each of $\left( \mathcal{B}^{(m)}(\vec{d}^{(m)}, \Lambda^{(m)}) \subseteq \mathcal{A}^{(m)} \right)$ is unitary regular inclusion and since unitary regularity is preserved under direct sum (by Lemma \ref{lem:reg_pres_direct_sum}), we have $\sB(\vec{d}, \sA)$ is unitary regular.

    This completes the proof!     

\end{proof}

    \section{Acknowledgement}
    The first named author acknowledges the support of the grant ANRF/ECRG/2024/002328/PMS. Sumit Kumar would like to thank the first named author for the opportunity to work at IIT Kanpur under his guidance through project no. SPO/ANRF/MATH/2025271. We would like to thank Prof. Ved Prakash Gupta for his valuable discussions and insightful suggestions during the early stages of this project.
    
	\medskip
	
	\bibliographystyle{amsalpha} 
	\bibliography{references}

\end{document}